\providecommand\@dotsep{5}
\def\listtodoname{List of Todos}
\def\listoftodos{\@starttoc{tdo}\listtodoname}
\numberwithin{equation}{section}
\newtheorem{Th}{Theorem}[section]
\newtheorem{Prop}[Th]{Proposition}
\newtheorem{Lem}[Th]{Lemma}
\newtheorem{lemma}[Th]{Lemma}
\newtheorem{Cor}[Th]{Corollary}
\newtheorem{Rem}[Th]{Remark}
\title[Hartree-Fock type systems]{
Hartree-Fock  type systems:\\
existence of ground states and asymptotic behavior
}
\author[P. d'Avenia]{Pietro d'Avenia}
\author[L. A. Maia]{Liliane de Almeida Maia}
\author[G. Siciliano]{Gaetano Siciliano}
\address[P. d'Avenia]{\newline\indent
	Dipartimento di Meccanica, Matematica e Management
	\newline\indent 
	Politecnico di Bari
	\newline\indent
	Via Orabona 4,  70125  Bari, Italy}
\email{\href{mailto:pietro.davenia@poliba.it}{pietro.davenia@poliba.it}}
\address[L.A. Maia]{\newline\indent
	Departamento de Matem\'atica 
	\newline\indent 
         Universidade de Bras\'ilia
	\newline\indent
	  70910-900 Bras\'ilia, Brazil}
\email{\href{mailto:lilimaia@unb.br}{lilimaia@unb.br}}
\address[G. Siciliano]{\newline\indent
	Departamento de Matem\'atica - Instituto de Matem\'atica e Estat\'istica
	\newline\indent 
	Universidade de S\~ao Paulo
	\newline\indent
	Rua do Mat\~ao 1010,  05508-090  S\~ao Paulo, Brazil}
\email{\href{mailto:sicilian@ime.usp.br}{sicilian@ime.usp.br}}
\subjclass[2010]{
35J50, 
35R09, 
81V55, 
35Q92, 
35J10.  
}
\keywords{Variational methods, ground state solutions, nonexistence result, asymptotic behaviour.}
\begin{document}
	\begin{abstract} 
		In this paper we consider an Hartree-Fock type system made by two Schr\"odinger equations in presence of 
		a Coulomb interacting term and a {\em cooperative} pure power and subcritical nonlinearity,
		driven by a suitable parameter $\beta\geq 0$. We show the existence of semitrivial and vectorial 
		ground states solutions depending on the parameters involved. The asymptotic behavior with respect to the parameter $\beta$ of these solutions is also studied.
	\end{abstract}

\maketitle
\begin{center}
\begin{minipage}{12cm}
\tableofcontents
\end{minipage}
\end{center}

\section{Introduction}

In the study of a molecular system made of $M$ nuclei interacting via the Coulomb potential with $N$ electrons, the starting point is the $(M+N)$-body Schr\"odinger equation
\[
i\hbar \partial_t \Psi
=
-\frac{\hbar^2}{2} \sum_{j=1}^{M+N} \frac{1}{m_j}\Delta_{x_j} \Psi
+ \frac{e^2}{8\pi\varepsilon_0}\sum_{\substack{j, k=1 \\ j\neq k}}^{M+N} \frac{Z_j Z_k}{|x_j - x_k|} \Psi,
\qquad
\Psi: \mathbb{R}\times\mathbb{R}^{3(M+N)} \to \mathbb{C}
\]
where the constants $e Z_j$'s are the charges and in particular the charge numbers $Z_j$'s are positive for the nuclei and $-1$ for the electrons.

Its complexity led to consider various approximations to describe the stationary states with simpler models.\\
A possible approximation, used in particular in models of Quantum Chemistry, is the Born-Oppenheimer approximation. Here the nuclei are considered as classical point particles and a fundamental assumption is that they are much heavier than electrons (see e.g. \cite{CDS} for a mathematical treatment).\\
Starting from the Born-Oppenheimer model, a further possible approximation is the Hartree-Fock method, which is generally considered fundamental to much of electronic structure theory and represents the basis of molecular orbital theory.
It is variational and
the electrons are considered as occupying single-particle orbitals making up the wavefunction. Each electron feels the presence of the other electrons indirectly through an effective potential. Thus, each orbital is affected by the presence of electrons in other orbitals.\\
This was introduced by Hartree in \cite{H} through the use of some particular test functions, without taking into account the Pauli principle. Subsequently, Fock in \cite{F} and Slater in \cite{Sl}, to take into account the Pauli principle, chose a different class of test functions, the Slater determinants, obtaining a system of $N$ coupled nonlinear Schr\"odinger equations
\[
-\frac{\hbar^2}{2m} \Delta \psi_k
+V_{\rm ext} \psi_k
+ \Big(\int_{\mathbb R^{3}} |x-y|^{-1} \sum_{j=1}^{N} |\psi_j(y)|^2dy\Big)\psi_k
+ (V_{\rm ex} \psi)_k
= E_k\psi_k,
\quad
k=1,\ldots,N,
\]
where $\psi_k: \mathbb{R}^{3} \to \mathbb{C}$, $V_{\rm ext}$ is a given external potential,
\begin{equation*}
	\label{excterm}
	(V_{\rm ex} \psi)_k:=-\sum_{j=1}^{N}\psi_j\int_{\mathbb R^{3}}\frac{\psi_k(y)\overline{\psi}_j(y)}{|x-y|}dy
\end{equation*}
is the $k$'th component of the {\em crucial exchange term}, and $E_k$ is the $k$'th eigenvalue.\\
A further relevant approximation for the exchange potential $V_{\rm ex} \psi$ is due to Slater in \cite{Sl2} (see also Dirac in \cite{D} in a different context), namely
\begin{equation}
	\label{VexSlater}
	(V_{\rm ex} \psi)_k \approx -C \Big(\sum_{j=1}^{N} |\psi_j|^2 \Big)^{1/3} \psi_k.
\end{equation}
Moreover, slightly different local approximations have been done in \cite{G,KS}. For further models we refer to \cite{PY} and references therein.\\
We emphasize that in these last approximations there is a strong dependence on the electron density function $\sum_{j=1}^{N} |\psi_j|^2$.\\
For more details about the Hartree-Fock method we refer the reader to \cite{BLS,DfLB,FK,Mauser,McW,PY,SO} and references therein, and, for a mathematical approach to \cite{LS,Lions87}.

In this paper we take $N=2$
and we assume
\begin{equation}
	\label{Vexnostro}
	(V_{\rm ex} \psi)
	= -C
	\left(\begin{matrix}
		\vspace{5pt}|\psi_1|^{q-2}\psi_1 & \beta |\psi_1|^{q-2}\psi_1\\
		\beta |\psi_2|^{q-2}\psi_2 & |\psi_2|^{q-2}\psi_2
	\end{matrix}\right)
	\left(\begin{matrix}
		\vspace{5pt}|\psi_1|^{q}\\
		|\psi_2|^{q}
	\end{matrix}\right)
	=
	-C
	\left(\begin{matrix}
		\vspace{5pt}|\psi_1|^{2q-2}\psi_1 +\beta |\psi_1|^{q-2}|\psi_2|^{q}\psi_1\\
		|\psi_2|^{2q-2}\psi_2 + \beta |\psi_1|^{q} |\psi_2|^{q-2}\psi_2 
	\end{matrix}\right)
\end{equation}
where  $q,\beta$ are suitable parameters.\\
Observe that, for $q=2$, the approximation in \eqref{Vexnostro} becomes
\[
(V_{\rm ex} \psi) = -C
\left(\begin{matrix}
\vspace{5pt}\psi_1 & \beta \psi_1\\
\beta \psi_2 & \psi_2
\end{matrix}\right)
\left(\begin{matrix}
\vspace{5pt}|\psi_1|^{2}\\
|\psi_2|^{2}
\end{matrix}\right)
=
-C
\left(\begin{matrix}
\vspace{5pt}(|\psi_1|^{2} +\beta |\psi_2|^{2})\psi_1\\
(\beta |\psi_1|^{2}+|\psi_2|^{2})\psi_2 
\end{matrix}\right),
\]
that is similar to the one applied by Slater in \eqref{VexSlater}, with a different power of the electron density function which is also {\em perturbed} by the parameter $\beta$.\\
Considering $\psi_1$ and $\psi_2$ real functions, renaming them as 
$u,v$, and taking, for simplicity, $C=1$, we get
\begin{equation}
	\label{system}\tag{$\mathcal{S}_{\lambda,\beta}$}
	\begin{cases}
		-\Delta u + u + \lambda \phi_{u,v} u = |u|^{2q-2} u + \beta |v|^q |u|^{q-2} u \medskip \\
		-\Delta v + v + \lambda\phi_{u,v} v = |v|^{2q-2} v + \beta |u|^q |v|^{q-2} v
	\end{cases}
	\text{in }\mathbb{R}^3,
\end{equation}
where 
$$\phi_{u,v} (x) := 
\int_{\mathbb R^{3}} \frac{u^{2}(y) + v^{2}(y)}{|x-y |}dy\in D^{1,2}(\mathbb R^{3}),$$
where this last space is the closure of the test functions in the $L^{2}$-norm of the gradient.\\
Observe that $\phi_{u,v}$ is the unique solution of
$$-\Delta \phi=4\pi (u^{2} + v^{2})\quad  \text{in }\mathbb{R}^3.
$$
Thus, system \eqref{system} can be also seen as a Schr\"odinger-Poisson type system (see e.g. \cite{FMN}).

A particular case of system \eqref{system}, when $\lambda =0$, leads to the local weakly coupled nonlinear Schr\"odinger system 
\begin{equation}
	\label{NLS}
	\begin{cases}
		-\Delta u + u = |u|^{2q-2} u + \beta |v|^q |u|^{q-2} u \medskip \\
		-\Delta v + \omega^2 \,v = |v|^{2q-2} v + \beta |u|^q |v|^{q-2} v
	\end{cases}
	\text{in }\mathbb{R}^3,
\end{equation}
for $0 < \omega^2 \leq 1$, which has been intensively studied in the past fifteen years.
Applying variational methods, the first works are authored by Lin and Wei \cite{LW} and also by Ambrosetti and Colorado \cite{AC}, Maia, Montefusco, and Pellacci \cite{MMP}, Bartsch and Wang \cite{BW}, Sirakov \cite{S}, then followed by an extensive literature presenting investigations of different aspects and variations of this problem.\\
In fact this system is obtained when looking for solitary wave solutions of two coupled nonlinear Schr\"odin\-ger equations which model, for instance, binary mixtures of Bose-Einstein condensates or propagation of wave packets in nonlinear optics. In the present scenario, the self-interaction is attractive (self-focusing) and the interaction between the two components may be either attractive ($\beta > 0 $) or repulsive ($\beta <0$).
Many different and clever approaches have been provided in order to find ranges of parameter $\beta$ for which a positive (ground state) solution $(u,v)$ of the system is vectorial (namely having both nontrivial components) and so distinguish them from the semitrivial ones $(u,0)$ and $(0,v)$.
So far a remarkable amount of information has been made available on this matter,
including the proof in \cite{Mandel} of a threshold $\beta(\omega, q, n)$ for existence or nonexistence of vector ground states for problem \eqref{NLS} in $\mathbb{R}^n$.

	The system above also arises as population dynamics are modelled and their associated reaction-diffusion equations in bounded or unbounded domains are studied using variational techniques; among many interesting works on this matter there are \cite{CTV02, CTV05, Soave} and references therein. When, for instance, an analysis is performed of the limiting case with respect to a parameter $\beta$ which describes interspecies competitions, going to plus or minus infinity, possible segregation states of two or more competing species are identified, leading to configurations where the populations occupy disjoint habitats.

In this paper we study the existence of solutions to problem
\eqref{system} in the unknowns $(u,v)\in \textrm H: = H^1(\mathbb{R}^3)\times H^1(\mathbb{R}^3)$.
In particular we are interested in nontrivial solutions, namely $(u,v) \in \textrm H\setminus \{0\}:=\textrm H\setminus\{(0,0)\}$.

Our approach in solving problem \eqref{system} is variational. Indeed a $C^{1}$ energy
functional in $\textrm H$ can be defined such that  its critical points give exactly the solutions
of our system. 


However in order to deal with compactness issues, we will work
(except for the nonexistence result) in the radial setting and we will use the compact embedding of $H_{\textrm r}^1(\mathbb{R}^3)$ into $L^p(\mathbb{R}^3)$ for $p\in(2,6)$, see   e.g. \cite{BL,Str}.
Then the functional will be restricted to 
$\textrm H_{\textrm{r}} : = H_{\textrm{r}}^1(\mathbb{R}^3)\times H_{\textrm{r}}^1(\mathbb{R}^3)$
and the solutions will be found in  $\textrm H_{\textrm{r}}$.
The invariance of the functional under rotations and the Palais' Principle of Symmetric Criticality \cite{Palais} makes natural this constraint.

Actually we are interested in the existence of  {\sl ground state solutions}: with this terms we mean  radial solutions 
whose energy is minimal among all the other  radial ones.\\
	Such {\em definition} is motivated by the fact that for our system \eqref{system}, as well as for the corresponding scalar problem
	\begin{equation}\label{eq:ruiz}
		-\Delta u + u + \lambda \phi_{u} u = |u|^{2q-2} u\quad
		\text{in }\mathbb{R}^3, \quad \phi_{u}(x):= \int_{\mathbb{R}^3} \frac{u^{2}(y)}{|x-y|}dy,
	\end{equation}
	the classical Schwarz symmetrization or the polarization arguments (see \cite{Lieb,MVS}), that are enough to treat the nonlocal term, and so to prove the radial symmetry of the ground state solutions for the Choquard equation, are (or seem to be, respectively) not sufficient to guarantee the radial symmetry of our ground states. Indeed, in our case, as observed by Lieb in \cite{Lieb}, the  Riesz inequality implies that the  energy increases when we pass to the symetrized function.



In order to state our main result concerning  the existence of ground state solutions for $q\in (3/2, 3)$, their vectorial or semitrivial nature, and their  asymptotic behaviour  with respect to the parameter $\beta$,
let us first recall  that in \cite{RuizJFA}
it was proved that, for any $\lambda>0$, the equation \eqref{eq:ruiz}
possesses a radial ground state solution among all the radial solutions
which will be denoted hereafter with $\mathfrak w\in H^{1}_{\textrm r}(\mathbb R^{3})$.\\ Observe that, whenever a ground state of \eqref{system} is semitrivial, then, necessarily, it is of the type $(\mathfrak w,0)$ or $(0,\mathfrak w)$.


We have

\begin{Th}\label{th:gs}
Let $q\in (3/2, 3)$, $\lambda>0$, and $\beta\geq0$. Then \eqref{system} has a  radial  ground state solution $(\mathfrak u_{\beta},\mathfrak v_{\beta})\neq(0,0)$. Moreover: 
\begin{enumerate}[label=(\roman{*}), ref=\roman{*}]
\item\label{th:gsi} if $\beta=0$,  the  ground state solution  is semitrivial; \medskip
\item\label{th:gsiib} if $\beta\in (0,2^{q-1}-1)$ and $q\in [2,3)$, the ground state solution  is semitrivial;
\item \label{th:gsiiib} if $\beta\in(0,q-1)$ and $q\in (3/2,2)$, the ground state solution  is vectorial and
\[
\lim_{\beta\to 0^+} \operatorname{dist}_{\rm H} (\mathcal{G}_\beta,\mathcal{G}_0)=0
\]
where $\mathcal{G}_\beta:=\{(\mathfrak u_\beta , \mathfrak v_\beta)\in {\rm H}_{\rm r}: (\mathfrak u_\beta , \mathfrak v_\beta) \text{ is a ground state of \eqref{system}}\}$ and $\mathcal{G}_0:=\{(\mathfrak w,0),(0,\mathfrak w)\}$;
\item\label{th:gsiii} if 
\begin{equation}
	\label{betalarge}
	\beta \in 
	\begin{cases}
		[q-1,+\infty) & \text{ for }q\in (3/2,2),\\
		(2^{q-1}-1,+\infty) & \text{ for } q\in [2,3),
	\end{cases}
\end{equation}
the ground state $(\mathfrak u_{\beta}, \mathfrak v_{\beta})$ is
vectorial and
\begin{equation} \label{betainfinito}
\lim_{\beta\to+\infty} (\mathfrak u_\beta , \mathfrak v_\beta) =(0,0)
\text{ in }{\rm H}_{\rm r};
\end{equation}
\item\label{th:gsivb} if $\beta=2^{q-1}-1$ and $q\in [2,3)$, system \eqref{system}
admits both semitrivial and vectorial ground states.
\end{enumerate}
\end{Th}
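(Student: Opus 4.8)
The plan is to set up the variational framework carefully, then treat the five cases as consequences of comparing the ground‑state energy level on suitable Nehari‑type manifolds. Let $J_{\lambda,\beta}$ denote the energy functional associated to $(\mathcal S_{\lambda,\beta})$ on $\mathrm H_{\mathrm r}$, and let $c_{\lambda,\beta}$ be its ground‑state level, i.e. the infimum of $J_{\lambda,\beta}$ over its Nehari manifold $\mathcal N_{\lambda,\beta}$. A first block of work, which I expect to be developed in earlier sections, establishes that $J_{\lambda,\beta}$ has the mountain–pass geometry, that bounded Palais–Smale sequences are relatively compact in $\mathrm H_{\mathrm r}$ (using the compact embedding $H^1_{\mathrm r}(\mathbb R^3)\hookrightarrow L^p$ for $p\in(2,6)$, together with the fact that the nonlocal term is continuous and the growth $q\in(3/2,3)$ keeps all exponents subcritical), and that Nehari minimizing sequences are bounded. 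Consequently $c_{\lambda,\beta}$ is attained at some $(\mathfrak u_\beta,\mathfrak v_\beta)\neq(0,0)$, which is a ground state; this gives the first assertion. The key quantity for distinguishing the cases is the \emph{semitrivial level}: since any semitrivial solution is of the form $(\mathfrak w,0)$ or $(0,\mathfrak w)$ by the remark preceding the theorem, the semitrivial ground‑state energy equals $c^{\mathrm{sc}}_\lambda:=J^{\mathrm{sc}}_\lambda(\mathfrak w)$, the ground‑state level of the scalar problem \eqref{eq:ruiz} from \cite{RuizJFA}.

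For \ref{th:gsi}, $\beta=0$ decouples the coupling nonlinearity, so the system energy of a pair $(u,v)$ is the sum of two scalar energies plus the nonlocal cross term $\lambda\int\phi_{u,v}(u^2+v^2)$, which is \emph{positive} and vanishes only when one component vanishes; hence any vectorial competitor has energy strictly above $2c^{\mathrm{sc}}_\lambda>c^{\mathrm{sc}}_\lambda$ once one checks, by a scaling/projection argument onto $\mathcal N_{\lambda,0}$, that splitting the mass into two bumps never lowers the level below $c^{\mathrm{sc}}_\lambda$. For \ref{th:gsiib} (and the borderline \ref{th:gsivb}), the point is an algebraic inequality: for $q\ge 2$ one has $a^{2q}+b^{2q}+2\beta a^q b^q \le (a^2+b^2)^q$ precisely when $\beta\le 2^{q-1}-1$ (equality at $a=b$ when $\beta=2^{q-1}-1$). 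This inequality lets one compare, along the Nehari constraint, the value of $J_{\lambda,\beta}$ at any $(u,v)$ with the value of the scalar functional at a single function built from $(u^2+v^2)^{1/2}$‑type rearrangement of the nonlinear term, showing $c_{\lambda,\beta}\ge c^{\mathrm{sc}}_\lambda$, with the reverse inequality coming from the semitrivial test pair $(\mathfrak w,0)$; hence the ground state is semitrivial, and at $\beta=2^{q-1}-1$ the equality case produces in addition a genuinely vectorial minimizer (e.g. a suitably rescaled $(t\mathfrak w, t\mathfrak w)$), giving \ref{th:gsivb}.

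For \ref{th:gsiiib} and \ref{th:gsiii}, i.e. in the ``large coupling'' regime, the strategy is the opposite: exhibit a vectorial competitor strictly below $c^{\mathrm{sc}}_\lambda$. Testing with $(s\mathfrak w, s\mathfrak w)$ and projecting onto the Nehari manifold, the nonlinear part gains the factor $2+2\beta$ (from $2\,|s\mathfrak w|^{2q}+2\beta|s\mathfrak w|^{2q}$) while the quadratic and nonlocal parts scale in a controlled way; for $q\in(3/2,2)$ the threshold turns out to be $\beta=q-1$ and for $q\in[2,3)$ it is $\beta=2^{q-1}-1$, exactly because these are the values at which $1+\beta$ overtakes $2^{q-1}$ in the relevant Nehari computation. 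This yields $c_{\lambda,\beta}<c^{\mathrm{sc}}_\lambda$, so no ground state can be semitrivial, forcing $(\mathfrak u_\beta,\mathfrak v_\beta)$ to be vectorial. The asymptotics are then soft consequences: as $\beta\to 0^+$ in \ref{th:gsiiib}, $c_{\lambda,\beta}\to c^{\mathrm{sc}}_\lambda$ by continuity, the minimizers stay bounded, a subsequence converges in $\mathrm H_{\mathrm r}$ by compactness, and the limit is a ground state of $(\mathcal S_{\lambda,0})$, hence lies in $\mathcal G_0$, which gives $\operatorname{dist}_{\mathrm H}(\mathcal G_\beta,\mathcal G_0)\to 0$; as $\beta\to+\infty$ in \ref{th:gsiii}, testing shows $c_{\lambda,\beta}\to 0$, and since $J_{\lambda,\beta}(\mathfrak u_\beta,\mathfrak v_\beta)$ controls $\|(\mathfrak u_\beta,\mathfrak v_\beta)\|^2_{\mathrm H}$ from below along the Nehari manifold (by an argument that extracts a uniform lower bound on the norm unless the pair tends to $0$), one concludes \eqref{betainfinito}.

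The main obstacle I expect is the sharp threshold analysis separating \ref{th:gsiib}/\ref{th:gsiiib} from \ref{th:gsiii} and pinning down the borderline case \ref{th:gsivb}: one must prove \emph{both} that below the threshold every vectorial configuration has energy $\ge c^{\mathrm{sc}}_\lambda$ (a global inequality over the full Nehari manifold, not just over the one‑parameter family $(t\mathfrak w,t\mathfrak w)$, and the nonlocal term $\phi_{u,v}$ couples the two components in a way that does not split cleanly) \emph{and} that above the threshold the specific test family strictly beats $c^{\mathrm{sc}}_\lambda$ for \emph{every} admissible $\beta$, which requires the monotonicity in $\beta$ of the projected level to be handled with care. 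The different exponent windows $q<2$ versus $q\ge 2$ arise exactly from where the elementary inequality $a^{2q}+b^{2q}+2\beta a^qb^q$ versus $(a^2+b^2)^q$ changes direction, so the combinatorial/convexity lemma underlying this dichotomy is the technical heart of the argument.
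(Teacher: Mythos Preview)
Your overall strategy is sound and matches the paper's in spirit (compare the system ground-state level with the scalar one via the pointwise inequality $a^{2q}+b^{2q}+2\beta a^qb^q$ versus $(a^2+b^2)^q$), but there are two genuine gaps.

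\textbf{The Nehari manifold does not work on the full range.} For $q\in(3/2,2]$ the fibering $t\mapsto I_{\lambda,\beta}(tu,tv)$ contains the nonlocal term $\tfrac{\lambda}{4}t^4\int(u^2+v^2)\phi_{u,v}$ with exponent $4>2q$, so $I_{\lambda,\beta}(tu,tv)\to+\infty$ as $t\to\infty$: there is no mountain-pass geometry, the ordinary Nehari set need not be a manifold nor a natural constraint, and boundedness of minimizing sequences fails. This is exactly why the paper, following Ruiz, works on the \emph{Nehari--Pohozaev} manifold $\mathcal M=\{J_{\lambda,\beta}=0\}$ and uses the rescaling $\gamma_{u,v}(t)=(t^2u(t\cdot),t^2v(t\cdot))$, along which $I_{\lambda,\beta}$ behaves like $\mu t+\nu t^3-\sigma t^{4q-3}$ and has a unique maximum for every $(u,v)\neq(0,0)$. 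Your existence block, as written, only covers $q\in(2,3)$.

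\textbf{The diagonal test is not enough for (\ref{th:gsiiib}).} Testing with $(s\mathfrak w,s\mathfrak w)$ beats the semitrivial level precisely when $\mathfrak h_\beta(1/2)=(1+\beta)2^{1-q}>1$, i.e.\ when $\beta>2^{q-1}-1$. For $q\in(3/2,2)$ one has $2^{q-1}-1<q-1$, so your test handles (\ref{th:gsiii}) but leaves the whole interval $\beta\in(0,2^{q-1}-1]$ in (\ref{th:gsiiib}) uncovered; the claimed threshold ``$\beta=q-1$'' cannot come out of the diagonal competitor. The paper's fix is to observe that for $q<2$ the function $\mathfrak h_\beta(y)=y^q+(1-y)^q+2\beta y^{q/2}(1-y)^{q/2}$ satisfies $\mathfrak h_\beta'(0^+)=+\infty$, hence $\max_{[0,1]}\mathfrak h_\beta>1$ for \emph{every} $\beta>0$, with maximizer $y_\beta\to 0$ as $\beta\to 0^+$. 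One must therefore test with the off-diagonal pair $(\mathfrak w\cos\theta_\beta,\mathfrak w\sin\theta_\beta)$, $\cos^2\theta_\beta=y_\beta$, to get $\mathfrak m_\beta<\mathfrak n$. The value $q-1$ enters not as a vectorial/semitrivial threshold but as the value of $\beta$ at which $y_\beta$ reaches $1/2$.

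A smaller point: your argument for (\ref{th:gsi}) (``any vectorial competitor has energy above $2c^{\mathrm{sc}}_\lambda$'') is not correct as stated, because the components of a vectorial ground state are not individually on the scalar constraint and the cross term does not split the energy that way. The paper instead uses the polar-coordinate inequality $\|\nabla u\|_2^2+\|\nabla v\|_2^2\ge\|\nabla\varrho\|_2^2$ together with $\mathfrak h_0(y)<1$ on $(0,1)$ to compare $I_{\lambda,0}(\gamma_{u,v}(t))$ directly with $\mathcal I_{\lambda,0}(\zeta_\varrho(t))$; this is the same ``$(u^2+v^2)^{1/2}$ rearrangement'' you allude to in (\ref{th:gsiib}), and it is what actually makes (\ref{th:gsi}) work.
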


Some remarks on our result are now in order.

The presence of the nonlocal Coulomb type coupling in \eqref{system} implies several difficulties with respect to system like \eqref{NLS}, in particular  for what concerns the semitrivial or vectorial nature of the ground states, which is, auctually, the main goal of the paper.\\
Indeed system \eqref{NLS} when $\beta=0$ or when we consider semitrivial solutions, reduces to 
single equation 
$$-\Delta u+u = |u|^{2q-2}u\quad\text{in }\mathbb R^{3}.$$
For such   equation, well known results have been obtained about uniqueness of the positive solution, its nondegenracy, its radial symmetry and exponential decay. These facts are used in the study of \eqref{NLS} (see \cite{MMP,Mandel}). \\
In our case, even for $\beta=0$ the system remains coupled in the nonlocal term.\\
Moreover, even if, for semitrivial solutions, system \eqref{system} reduces to a single equation, for such equation no result about uniqueness, nondegeneracy, and eventual symmetries of positive solution are known.\\
Finally, to deal with powers $q\in(3/2,3)$, following \cite{RuizJFA}, we use a rescaling (see \eqref{gamma}) which generates different behaviors 
of the terms in the functionals but that anyway allows us to project any nontrivial couple $(u,v)$ in a suitable manifold.
 Actually, for the simpler case $q\in(2,3)$, the usual projection on the Nehari manifold
is enough.\\
Nevertheless, our analysis shows that the nature of the ground states depends on the local nonlinearity.
Indeed our results are comparable with the ones in \cite{Mandel}, even if they
are obtained in a different way: we start from the existence of  ground states and, using
the maximum values of a suitable one variable function related to the local nonlinearity (see Lemma \ref{unionefacili}),
we estimate the ground state energy level and construct also  a particular family of ground states
(see Lemma \ref{Mbeta}), that, in the particular case $q=2$ and $\beta=1$, gives infinitely many ground states.

Additionally, due to the symmetry in $u$ and $v$ of \eqref{system}, it is easy to obtain nontrivial solutions with $u=v$ (see Remark \ref{rem2l}). For $\beta$ large enough, such solutions are ground states (Theorem \ref{A2}) and, for $\beta$ small, they are not (Theorem \ref{A1}).

Finally, the solutions we find are  classical.
	Indeed, if $(u,v)\in {\rm H}_{\rm r} $, then $\phi_{u,v} \in W_{\rm loc}^{2,3}(\mathbb R^{3})$ and then it is
	$C^{0,\alpha}_{\rm loc}(\mathbb R^{3})$. But then by bootstrap arguments $u,v\in C^{2,\alpha}_{\rm loc}(\mathbb R^{3})$ which in turn implies $\phi_{u,v}\in  C^{2,\alpha}_{\rm loc}(\mathbb R^{3})$.
Moreover, by the Maximum Principle, every nontrivial component of a solution can be assumed strictly positive
without loss of generality.

Of course our problem can be written using the equivalent complex notation $\psi:=u+iv$. Observe that, with such a notation,
\[
\int_{\mathbb R^{3}} \frac{u^{2}(y) + v^{2}(y)}{|x-y |}dy
=\int_{\mathbb R^{3}} \frac{|\psi (y)|^{2}}{|x-y |}dy,
\]
depending  only on $|\psi|$. For our scopes, especially in order to distinguish between semitrivial and vectorial ground states, in the analysis it should be necessary to use real and imaginary parts of $\psi$ and so we will proceed using the vectorial notation $(u,v)$.


Additionally, we prove also the following nonexistence result.



\begin{Th}\label{th:nonexistence}
	In ${\rm H} \cap (L^{2q}(\mathbb{R}^3)\times L^{2q}(\mathbb{R}^3))\cap(L_{\rm loc}^\infty(\mathbb{R}^3)\times L_{\rm loc}^\infty(\mathbb{R}^3))$, system \eqref{system} has only the trivial solution  if $q\geq 3$  and no solution with {\em fixed sign} if  $q\in [1/2,1]$.
\end{Th}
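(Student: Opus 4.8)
The natural tool here is a Pohozaev-type identity combined with a integrated (tested-against-$(u,v)$) identity, both obtained by testing the system against suitable multipliers. The regularity assumptions in the statement ($u,v\in L^{2q}$, locally bounded) are exactly what is needed to justify the computations: they guarantee that $\phi_{u,v}$ is well-defined and smooth enough, that $u^{2q}, v^{2q}$ are integrable, and that the cubic convolution term $\int\!\!\int \frac{(u^2+v^2)(x)(u^2+v^2)(y)}{|x-y|}$ is finite (this requires $u^2+v^2\in L^{6/5}$, which follows from $H^1\hookrightarrow L^2\cap L^6$). The plan is to derive the two identities, take a linear combination to eliminate the $\|\nabla u\|_2^2+\|\nabla v\|_2^2$ and the $L^2$ terms, and read off a relation whose sign is incompatible with nontriviality when $q\geq 3$.

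\textbf{Step 1: the Nehari/testing identity.} Testing the first equation of \eqref{system} against $u$ and the second against $v$, and adding, gives
\[
\|\nabla u\|_2^2+\|\nabla v\|_2^2+\|u\|_2^2+\|v\|_2^2+\lambda\int_{\mathbb R^3}\phi_{u,v}(u^2+v^2)=\|u\|_{2q}^{2q}+\|v\|_{2q}^{2q}+2\beta\int_{\mathbb R^3}|u|^q|v|^q.
\]
Here I must check that $u,v$ are admissible test functions; the local boundedness plus $H^1$ regularity and elliptic bootstrap (as in the ``solutions are classical'' paragraph, applied a posteriori, or simply a density/cutoff argument) make this rigorous.

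\textbf{Step 2: the Pohozaev identity.} Testing against $x\cdot\nabla u$ and $x\cdot\nabla v$ (rigorously: multiply by $\eta_R(x)\,x\cdot\nabla u$ with a cutoff, integrate, and let $R\to\infty$, using the decay/integrability hypotheses to kill boundary terms), one obtains in $\mathbb R^3$
\[
\frac{1}{2}\bigl(\|\nabla u\|_2^2+\|\nabla v\|_2^2\bigr)+\frac{3}{2}\bigl(\|u\|_2^2+\|v\|_2^2\bigr)+\frac{5\lambda}{4}\int_{\mathbb R^3}\phi_{u,v}(u^2+v^2)=\frac{3}{2q}\Bigl(\|u\|_{2q}^{2q}+\|v\|_{2q}^{2q}+2\beta\int_{\mathbb R^3}|u|^q|v|^q\Bigr),
\]
the coefficient $5/4$ on the Coulomb term coming from the homogeneity of the Riesz kernel $|x|^{-1}$ in dimension $3$ (this is the standard Schrödinger--Poisson Pohozaev computation, cf.\ \cite{RuizJFA,FMN}).

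\textbf{Step 3: combine and conclude.} Write $A:=\|\nabla u\|_2^2+\|\nabla v\|_2^2\ge0$, $B:=\|u\|_2^2+\|v\|_2^2\ge0$, $C:=\int\phi_{u,v}(u^2+v^2)\ge0$, $D:=\|u\|_{2q}^{2q}+\|v\|_{2q}^{2q}+2\beta\int|u|^q|v|^q$; note $D\ge0$ since $\beta\ge0$. Step 1 says $A+B+\lambda C=D$ and Step 2 says $\tfrac12 A+\tfrac32 B+\tfrac54\lambda C=\tfrac{3}{2q}D$. Eliminating $A$ (multiply the first by $\tfrac12$ and subtract) gives $B+\tfrac34\lambda C=\bigl(\tfrac{3}{2q}-\tfrac12\bigr)D$. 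For $q\geq 3$ the right-hand side has coefficient $\tfrac{3}{2q}-\tfrac12\le 0$, while the left-hand side is $\ge0$; hence $B=C=D=0$, and then $A=0$ as well, forcing $(u,v)=(0,0)$. For the fixed-sign case with $q\in[1/2,1]$: if $u,v\ge0$ (or $\le0$) the testing identity of Step 1 reads $A+B+\lambda C=D$, but eliminating differently — subtract $3\times$(Step 1) from $2\times$(Step 2): $-2A-\tfrac12\lambda C+\text{...}$ — more simply, from $B+\tfrac34\lambda C=(\tfrac{3}{2q}-\tfrac12)D$ the coefficient $\tfrac{3}{2q}-\tfrac12>0$ for $q<3$, so no contradiction yet; instead one exploits that for $q\le 1$ the nonlinearity is no longer superlinear and a separate elimination (removing $B$ instead, via $3\times$Step 1 $-2\times$ the appropriately scaled Pohozaev) yields $A\bigl(\tfrac32-1\bigr)+\lambda C(\cdots)=\bigl(\tfrac{3}{2}-\tfrac{3}{q}\bigr)D$ whose right side is $\le 0$ precisely when $q\le 2$, but this still does not isolate $q\le1$. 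At this point the fixed-sign hypothesis enters genuinely: I would instead integrate the equation itself (test against the constant $1$, legitimate for nonnegative integrable solutions after cutoff) to get $\|u\|_1+\lambda\int\phi_{u,v}u=\int u^{2q-1}+\beta\int v^q u^{q-1}$ and a companion for $v$; for $q\in[1/2,1]$ one has $2q-1\le 1$, and combining with an $L^\infty$ bound and the strict positivity from the maximum principle forces a pointwise contradiction (a positive solution cannot satisfy $-\Delta u+u\le |u|^{2q-1}+\ldots$ with a sublinear right-hand side together with the positive nonlocal drift $\lambda\phi_{u,v}u$).

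\textbf{Main obstacle.} The delicate point is not the $q\ge3$ case — that is a clean Pohozaev argument once the boundary terms are controlled — but rather (a) rigorously justifying the Pohozaev identity under only the stated integrability (no a priori decay), which requires a careful cutoff-and-limit argument showing the boundary integrals vanish along a sequence $R_n\to\infty$, using $\nabla u,\nabla v\in L^2$ and $u,v\in L^2\cap L^{2q}$; and (b) the fixed-sign case $q\in[1/2,1]$, where Pohozaev alone is insufficient and one must genuinely use positivity — the cleanest route being to test against $1$ and derive an inequality of the form $-\Delta u+u+(\text{nonnegative})\le(\text{sublinear in }u)$ that a positive $L^\infty_{\mathrm{loc}}\cap H^1$ function cannot solve globally, or equivalently to compare with the (non)existence of positive solutions of $-\Delta u+u=u^{2q-1}$ for $2q-1\le1$. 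I expect (b) to be the true heart of the matter.
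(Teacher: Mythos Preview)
Your treatment of $q\ge 3$ is correct and matches the paper: Nehari plus Pohozaev, then a linear combination. (The paper justifies Pohozaev by integrating on balls $B_R$ and sending $R\to\infty$ along a sequence on which the boundary terms vanish, as in Berestycki--Lions; your cutoff variant is equivalent.) You eliminate $A$, the paper eliminates $D$; either works.

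For $q\in[1/2,1]$ there are two errors. The first is computational: your ``eliminate $B$'' combination is the right idea, but you miscomputed it. Taking $3\times\text{(Step 1)}-2\times\text{(Step 2)}$ actually gives
\[
2A+\tfrac{1}{2}\lambda C=\Bigl(3-\tfrac{3}{q}\Bigr)D,
\]
and the right-hand coefficient is $\le 0$ precisely when $q\le 1$ (not $q\le 2$), so this \emph{does} isolate the stated range: $A=C=0$ forces $u,v$ constant in $H^1(\mathbb R^3)$, hence zero, contradicting strict sign. Equivalently, the paper eliminates $D$ to obtain
\[
\Bigl(1-\tfrac{q}{3}\Bigr)A+(1-q)B+\Bigl(1-\tfrac{5q}{6}\Bigr)\lambda C=0,
\]
whose coefficients are all nonpositive for $q\ge 3$ and all nonnegative for $q\le 1$, handling both ranges with a single identity. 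So Pohozaev plus Nehari \emph{is} sufficient throughout; you simply chose the wrong elimination and then made an arithmetic slip when you tried the right one.

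The second error is conceptual: you misidentified the role of the fixed-sign hypothesis. It is not there to feed a separate ``test against $1$'' or sublinear-comparison argument; it is needed only to make the Pohozaev derivation itself legitimate. For small $q$ the term $|v|^{q}|u|^{q-2}u$ in the equation is singular on $\{u=0\}$, and the integration-by-parts identity for the coupling,
\[
\int_{B_R}\bigl(|u|^{q-2}u\,|v|^{q}\,x\cdot\nabla u+|u|^{q}\,|v|^{q-2}v\,x\cdot\nabla v\bigr)
=-\frac{3}{q}\int_{B_R}|u|^{q}|v|^{q}+\frac{R}{q}\int_{\partial B_R}|u|^{q}|v|^{q},
\]
requires $u,v$ to stay away from zero. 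Fixed sign is exactly what guarantees this. Your alternative route via testing against constants is neither needed nor, as sketched, a proof.
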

Here, with {\em fixed sign solution}, we mean couples $(u,v)$ where each component is strictly positive or negative.

\medskip

The paper is organized as follows.\\
In Section \ref{sec2} we present few preliminaries in order to prove our results.
In particular we recall some  results in \cite{RuizJFA} 
that will be used to compare the ground
state level of our functional (for example to study the asymptotic behaviour). 
We give also the variational setting for
our problem.\\
In Section  \ref{sec:EXNONEX} we prove the nonexistence result, Theorem \ref{th:nonexistence},
which is based on a Pohozaev identity associated to the problem.
Then we give also the proof of the existence of a nontrivial ground state
in Theorem \ref{th:gs}.\\
Then item (\ref{th:gsi}) is proved in Section \ref{sec:bzero}, (\ref{th:gsiib}) and  (\ref{th:gsiiib})
 are proved in Section \ref{sec:POSITIVESMALL}, (\ref{th:gsiii}) is proved in Section
 \ref{sec:blarge}, and (\ref{th:gsivb}) in Section \ref{sec:7}. \\
We complete Section \ref{sec:POSITIVESMALL} and Section \ref{sec:blarge}
showing that some particular solutions arising from the study of the single equation 
(see Remark \ref{rem2l}) are
or not ground states (see Theorem \ref{A1} and Theorem \ref{A2}, respectively).

\subsection*{Notations}
\begin{itemize}
	\item Unless otherwise stated, integrals  will always be considered
	on the whole $\mathbb R^{3}$ with the Lebesgue measure.
	\item We denote with $\|\cdot\|$ the norm in $H^{1}(\mathbb R^{3})$ and with 
	$\|\cdot\|_{p}$ the standard $L^{p}-$ norm.
	\item We denote with $\varepsilon_{n}$ a generic  sequence which vanishes as $n$ tends to infinity and with
	$C$ a suitable positive constant 
	that can vary from line to line.
\end{itemize}
Other notations will be introduced whenever needed.	


\section{Preliminary results }\label{sec2}



\medskip

In order to prove our results, let us first recall some facts 
about \eqref{eq:ruiz}.
	In \cite{RuizJFA} it was proved that for any $\lambda>0$ and $q\in (3/2, 3)$, equation 	\eqref{eq:ruiz} has a {\em radial ground state} solution
	$\mathfrak w\in H^{1}_{\textrm r}(\mathbb R^{3})\setminus\{ 0\}$.
	It is found as a minimizer of the $C^1$-functional
	\[
	\mathcal{I}_{\lambda,0}(u):=
	\frac{1}{2} \|\nabla u \|_2^2
	+ \frac{1}{2} \| u \|_2^2
	+ \frac{\lambda}{4}  \int  u^2 \phi_u
	-\frac{1}{2q}  \|u\|_{2q}^{2q}, \quad u\in H^{1}_{\textrm r}(\mathbb R^{3})
	\]
	on the constraint
	\begin{equation}\label{eq:NRuiz}
	\mathcal{N}^\lambda := \left\{u\in H_{\textrm r}^1(\mathbb{R}^3): \mathcal{J}_{\lambda,0} (u)=0\right\},	
	\end{equation}
	where 
	\[
	\mathcal{J}_{\lambda,0} (u):= \frac{3}{2}  \|\nabla u \|_2^2
	+ \frac{1}{2} \| u \|_2^2
	+ \frac{3}{4} \lambda \int \phi_u u^2
	-\frac{4q-3}{2q}  \|u\|_{2q}^{2q}.
	\]
	The set $\mathcal N^{\lambda}$ is obtained as a {\em linear combination} of the Nehari identity
	\[
	\| \nabla u\|_2^2
	+ \|u\|_2^2
	+\lambda \int \phi_u u^2
	-\|u\|_{2q}^{2q}=0
	\]
	and the Pohozaev identity
	\[
	\frac{1}{2}\| \nabla u\|_2^2
	+ \frac{3}{2}\|u\|_2^2
	+\frac{5}{4}\lambda \int \phi_u u^2
	-\frac{3}{2q}\|u\|_{2q}^{2q}=0.
	\]
Given $u\not\equiv 0$, consider the path
	\begin{equation}
\label{zeta}
\zeta_{u}(t):=t^2 u(t\cdot), \quad   t\geq 0.
\end{equation}
Note that
\begin{equation*}
\label{Ionevart}
\mathcal{I}_{\lambda,0}(\zeta_u(t))=
\frac{t^3}{2} \|\nabla u \|_2^2
+ \frac{t}{2} \| u \|_2^2
+ \frac{\lambda}{4} t^3 \int  u^2 \phi_u
-\frac{t^{4q-3}}{2q}  \|u\|_{2q}^{2q},
\end{equation*}
\begin{equation}
\label{Jonevart}
\mathcal{J}_{\lambda,0} (\zeta_u(t))
= \frac{3}{2} t^3 \|\nabla u \|_2^2
+ \frac{1}{2} t \| u \|_2^2
+ \frac{3}{4} \lambda t^3 \int \phi_u u^2
-\frac{4q-3}{2q} t^{4q-3} \|u\|_{2q}^{2q},
\end{equation}
and $t\mapsto \mathcal I_{\lambda,0}(\zeta_{u}(t))$ has a unique critical point,
 denoted with $t_{u}>0$  corresponding to its maximum.
 The elements of $\mathcal N^{\lambda}$ are then all of type $\zeta_{u}(t_{u})$ due to the fact that
 	\[
 \mathcal{J}_{\lambda,0} (\zeta_{u}(t))
 =\frac{d}{dt}\mathcal{I}_{\lambda,0}(\zeta_{u}(t)).
 \]
 In particular $u\in \mathcal N^{\lambda}$ if and only if $t_{u}=1$ and
then 
	\begin{equation}  
	\label{gslRuiz}
0<	\mathcal{I}_{\lambda,0}(\mathfrak{w})
	= \inf_{u\in\mathcal{N}^\lambda} \mathcal{I}_{\lambda,0} (u)
= \inf_{u\in H_{\textrm r}^1(\mathbb{R}^3) \setminus\{0\}} \mathcal I_{\lambda,0}(\zeta_{u}(t_{u}))
	= \inf_{u\in H_{\textrm r}^1(\mathbb{R}^3) \setminus\{0\}} \max_{t>0} \mathcal{I}_{\lambda,0}(\zeta_u(t)).
	\end{equation}
	
	\begin{Rem}
	Of course  $(\mathfrak{w},0)$ and $(0,\mathfrak{w})$ are semitrivial solutions of our system \eqref{system} for any $\beta$ and so,  since $I_{\lambda,\beta}(u,0) =I_{\lambda,\beta}(0,u) =  \mathcal I_{\lambda,0}(u)$, 	
	they are necessarily ground state  whenever the ground state  is semitrivial.
	\end{Rem}

For future reference we set
\begin{equation*}\label{eq:n}
\mathfrak n := \mathcal{I}_{\lambda,0}(\mathfrak{w}).
\end{equation*}

Moreover, the same arguments of \cite{RuizJFA} can be repeated for the equation
\begin{equation}
\label{2lambdabeta}
-\Delta u + u + 2\lambda \phi_{u} u = (1+\beta)|u|^{2q-2} u \quad \text{ in } \mathbb R^{3}
\end{equation}
where $\beta\geq0$, leading to the existence of a ground state solution 
$\mathfrak z_\beta$ that minimizes the functional
\[
\mathcal I_{2\lambda, \beta}{(u)}:=\mathcal I_{2\lambda, 0}(u) -\frac{\beta}{2q} \|u\|_{2q}^{2q} =
\frac{1}{2} \|\nabla u \|_2^2
+ \frac{1}{2} \| u \|_2^2
+ \frac{\lambda}{2}  \int \phi_{u} u^2
-\frac{1+\beta}{2q} \|u\|_{2q}^{2q}
\]
on  the set of  $u\in H^{1}_{\textrm{r}}(\mathbb R^{3})$
satisfying
\[
\mathcal J_{2\lambda, \beta} (u):=
\frac{3}{2}  \|\nabla u \|_2^2
+ \frac{1}{2} \| u \|_2^2
+ \frac{3}{2} \lambda \int \phi_{u} u^2
-\frac{4q-3}{2q}  (1+\beta) \|u\|_{2q}^{2q}=0.
\]

Coming back to our system \eqref{system}, observe that
it can be  written as
\begin{equation}\label{eq:system3}
\begin{cases}
-\Delta u + u + \lambda \phi u = |u|^{2q-2} u + \beta |v|^q |u|^{q-2} u \medskip \\
-\Delta v + v + \lambda \phi v = |v|^{2q-2} v + \beta |u|^q |v|^{q-2} v \medskip \\
-\Delta \phi = 4\pi(u^{2}+v^{2})
\end{cases}
\text{in }\mathbb{R}^3.
\end{equation}
Moreover
\begin{equation}\label{lastint}
\int_{\mathbb R^{3}} |\nabla \phi_{u,v}|^{2} = 4\pi \int_{\mathbb R^{3}}  (u^{2}+v^{2})\phi_{u,v}
\end{equation}
from which  the estimate follows
 $$\| \nabla \phi_{u,v}\|_2 \le C \left( \|u\|^2 + \|v\|^2\right).$$

It is standard to see that the weak solutions of \eqref{system} 
are
characterised as the critical points of the $C^1$ functional
defined on $\textrm H$
\begin{align*}
I_{\lambda,\beta}(u,v)
&=
\frac{1}{2} \|\nabla u \|_2^2
+ \frac{1}{2} \| u \|_2^2
+ \frac{1}{2} \|\nabla v \|_2^2
+ \frac{1}{2} \| v \|_2^2
+ \frac{\lambda}{4} \int (u^2+v^2)\phi_{u,v}  \\
&\quad
-\frac{1}{2q}(\|u\|_{2q}^{2q}+\|v\|_{2q}^{2q})
-\frac{\beta}{q}\int |u|^q |v|^q.
\end{align*}

\begin{Rem}\label{rem2l}
	Observe that, for every $\beta\geq0$ and $u\in H^1(\mathbb{R}^3)$,
	\begin{equation}
	\label{i2i}
	I_{\lambda,\beta}(u,u) =2 \mathcal I_{2\lambda,\beta}(u)
	\end{equation}
	and, $(u,u)$ is a solution of \eqref{system} if and only if $u$ is a solution of \eqref{2lambdabeta}.
\end{Rem}

If $(u,v)$ is  a solution of \eqref{system}, 
 multiplying the first equation of the system by $u$ and the second one by $v$ we  see that
$(u,v)\in {\rm H}$  satisfies the Nehari type identities
\begin{align}
\|\nabla u \|_2^2
+ \| u \|_2^2 
+ \lambda  \int u^2 \phi_{u,v} 
&=
\|u\|_{2q}^{2q}
+\beta\int |u|^q |v|^q,\label{N1}\\
\|\nabla v \|_2^2
+ \| v \|_2^2 
+ \lambda  \int  v^2 \phi_{u,v}
&=
\|v\|_{2q}^{2q}
+\beta\int |u|^q |v|^q. \label{N2}
\end{align}

Given $(u,v)\in\textrm{H}\setminus\{0\}$,
we denote with
$\gamma_{u,v}:\mathbb [0,+\infty[\to \textrm{H}$ the curve
\begin{equation}
\label{gamma}
\gamma_{u,v} (t):= (t^2 u(t\cdot), t^2 v(t\cdot )).
\end{equation}
By a simple calculation we have that
\begin{equation*}\label{eq:Igamma}
\begin{split}
I_{\lambda, \beta}(\gamma_{u,v}(t))
&=
\frac{t^3}{2} (\|\nabla u \|_2^2 + \|\nabla v \|_2^2)
+ \frac{t}{2} (\| u \|_2^2 + \| v \|_2^2)
+ \frac{\lambda}{4} t^3 \int (u^2+v^2) \phi_{u,v} \\
&\quad
- \frac{t^{4q-3}}{2q}\left(
\|u\|_{2q}^{2q}+\|v\|_{2q}^{2q}
+2\beta\int |u|^q |v|^q
\right),
\end{split}
\end{equation*}
which will be useful in our arguments.

For future developments, we need the following results.

	\begin{lemma}\label{lem:calculus}
		Let $\mu,\nu,\sigma>0$, $p>3$, and consider the function $\mathfrak{f}_{\sigma}(t):=\mu t+\nu t^{3} - \sigma t^{p}$. Then
		\begin{enumerate}[label=(\alph{*}), ref=\alph{*}]
			\item \label{fa} $\mathfrak{f}_\sigma$ has a unique critical point $\mathfrak{t}_\sigma >0$ which corresponds to its maximum and there exists a unique $\mathfrak{T}_\sigma>\mathfrak{t}_\sigma$ such that $\mathfrak{f}_\sigma (\mathfrak{T}_\sigma)=0$;
			\item \label{fb} $\displaystyle\lim_{\sigma \to +\infty}  \mathfrak{f}_\sigma(\mathfrak{t}_\sigma) = 0$;
			\item \label{fc} $\displaystyle \lim_{\sigma\to+\infty}  \mathfrak{T}_\sigma=0$ and $\displaystyle \mu=\lim_{\sigma\to+\infty} \sigma \mathfrak{T}_\sigma^{p-1}$. 
		\end{enumerate}
	\end{lemma}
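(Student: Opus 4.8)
The plan is to analyze the single-variable function $\mathfrak{f}_\sigma(t) = \mu t + \nu t^3 - \sigma t^p$ on $[0,\infty)$ by elementary calculus, tracking everything as $\sigma \to +\infty$.

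\textbf{Proof of \ref{fa}.} We compute $\mathfrak{f}_\sigma'(t) = \mu + 3\nu t^2 - \sigma p\, t^{p-1}$. Since $\mathfrak{f}_\sigma'(0) = \mu > 0$ and, because $p - 1 > 2$, the term $-\sigma p\, t^{p-1}$ dominates as $t \to +\infty$, we have $\mathfrak{f}_\sigma'(t) \to -\infty$; hence $\mathfrak{f}_\sigma'$ has at least one zero in $(0,\infty)$. To see the critical point is unique, observe that $g(t) := \mathfrak{f}_\sigma'(t)/t^2 \cdot$ is not quite monotone, so instead argue on $\mathfrak{f}_\sigma'$ directly: $\mathfrak{f}_\sigma''(t) = 6\nu t - \sigma p(p-1) t^{p-2}$, which vanishes only at $t_* := \big(6\nu/(\sigma p(p-1))\big)^{1/(p-3)}$, is positive on $(0,t_*)$ and negative on $(t_*,\infty)$. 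Thus $\mathfrak{f}_\sigma'$ is increasing then decreasing; combined with $\mathfrak{f}_\sigma'(0) = \mu > 0$ and $\mathfrak{f}_\sigma'(+\infty) = -\infty$, it has exactly one zero $\mathfrak{t}_\sigma > 0$, with $\mathfrak{f}_\sigma' > 0$ on $(0,\mathfrak{t}_\sigma)$ and $\mathfrak{f}_\sigma' < 0$ on $(\mathfrak{t}_\sigma,\infty)$. Hence $\mathfrak{t}_\sigma$ is the unique critical point and corresponds to the maximum of $\mathfrak{f}_\sigma$. Since $\mathfrak{f}_\sigma(0) = 0$, $\mathfrak{f}_\sigma(\mathfrak{t}_\sigma) > 0$, $\mathfrak{f}_\sigma$ is strictly decreasing on $(\mathfrak{t}_\sigma,\infty)$ and $\mathfrak{f}_\sigma(t) \to -\infty$ as $t\to+\infty$, there is a unique $\mathfrak{T}_\sigma > \mathfrak{t}_\sigma$ with $\mathfrak{f}_\sigma(\mathfrak{T}_\sigma) = 0$.

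\textbf{Proof of \ref{fb}.} Since $\mathfrak{t}_\sigma$ is a critical point, the maximum value satisfies $0 < \mathfrak{f}_\sigma(\mathfrak{t}_\sigma) \le \max_{t\ge 0}(\mu t + \nu t^3) - \sigma \mathfrak{t}_\sigma^p$. A cleaner route: from $\mathfrak{f}_\sigma'(\mathfrak{t}_\sigma) = 0$ we get $\sigma p\, \mathfrak{t}_\sigma^{p-1} = \mu + 3\nu \mathfrak{t}_\sigma^2$, so $\sigma \mathfrak{t}_\sigma^p = \mathfrak{t}_\sigma(\mu + 3\nu\mathfrak{t}_\sigma^2)/p$, whence
\[
\mathfrak{f}_\sigma(\mathfrak{t}_\sigma) = \mu \mathfrak{t}_\sigma + \nu \mathfrak{t}_\sigma^3 - \frac{\mathfrak{t}_\sigma(\mu+3\nu\mathfrak{t}_\sigma^2)}{p} = \mathfrak{t}_\sigma\Big(\mu\,\frac{p-1}{p} + \nu\,\frac{p-3}{p}\,\mathfrak{t}_\sigma^2\Big).
\]
It therefore suffices to show $\mathfrak{t}_\sigma \to 0$ as $\sigma\to+\infty$ (which also gives $\mathfrak{f}_\sigma(\mathfrak{t}_\sigma)\to 0$, since then $\mathfrak{t}_\sigma^3 \to 0$ as well). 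If, for contradiction, $\mathfrak{t}_{\sigma_n} \ge \delta > 0$ along some $\sigma_n \to +\infty$, then $\sigma_n p\, \mathfrak{t}_{\sigma_n}^{p-1} = \mu + 3\nu \mathfrak{t}_{\sigma_n}^2$; but for $\mathfrak{t}_{\sigma_n}$ to stay bounded the left side would blow up unless $\mathfrak{t}_{\sigma_n}\to 0$, and if $\mathfrak{t}_{\sigma_n}$ is unbounded along a subsequence the identity again fails for large $n$. Hence $\mathfrak{t}_\sigma \to 0$, proving \ref{fb}.

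\textbf{Proof of \ref{fc}.} From $\mathfrak{t}_\sigma < \mathfrak{T}_\sigma$ we do not directly get $\mathfrak{T}_\sigma \to 0$, so we argue separately. The equation $\mathfrak{f}_\sigma(\mathfrak{T}_\sigma) = 0$ with $\mathfrak{T}_\sigma > 0$ reads $\mu + \nu \mathfrak{T}_\sigma^2 = \sigma \mathfrak{T}_\sigma^{p-1}$, i.e.
\[
\sigma \mathfrak{T}_\sigma^{p-1} = \mu + \nu \mathfrak{T}_\sigma^2.
\]
If $\mathfrak{T}_{\sigma_n}$ did not tend to $0$, then along a subsequence either $\mathfrak{T}_{\sigma_n}\to L\in(0,\infty)$, forcing $\sigma_n L^{p-1} \to \mu + \nu L^2 < \infty$, impossible since $\sigma_n\to\infty$; or $\mathfrak{T}_{\sigma_n}\to+\infty$, but then since $p-1>2$ the left side grows faster than the right, contradiction. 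Hence $\mathfrak{T}_\sigma\to 0$. Finally, letting $\sigma\to+\infty$ in $\sigma \mathfrak{T}_\sigma^{p-1} = \mu + \nu\mathfrak{T}_\sigma^2$ and using $\mathfrak{T}_\sigma\to 0$ gives $\lim_{\sigma\to+\infty}\sigma\mathfrak{T}_\sigma^{p-1} = \mu$, which completes the proof.

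The only mildly delicate point is establishing the limits $\mathfrak{t}_\sigma\to 0$ and $\mathfrak{T}_\sigma\to 0$ rigorously; both follow from the defining identities by the compactness/contradiction argument above, so there is no serious obstacle.
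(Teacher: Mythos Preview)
Your proof is correct and follows essentially the same approach as the paper: the key identity $\mathfrak{f}_\sigma(\mathfrak{t}_\sigma) = \mathfrak{t}_\sigma\big(\tfrac{p-1}{p}\mu + \tfrac{p-3}{p}\nu\,\mathfrak{t}_\sigma^2\big)$ and the contradiction arguments for $\mathfrak{t}_\sigma\to 0$ and $\mathfrak{T}_\sigma\to 0$ from the defining equations $\sigma p\,\mathfrak{t}_\sigma^{p-1}=\mu+3\nu\mathfrak{t}_\sigma^2$ and $\sigma\mathfrak{T}_\sigma^{p-1}=\mu+\nu\mathfrak{T}_\sigma^2$ are exactly what the paper uses. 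The only difference is that you spell out part~(\ref{fa}) via the second derivative, whereas the paper dismisses it as trivial with a reference.
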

	\begin{proof}
		Property (\ref{fa}) is essentially \cite[Lemma 3.3]{RuizJFA} and is trivial.\\
		Let us prove (\ref{fb}). Since $p>3$, then, necessarily, $\mathfrak{t}_\sigma\to 0$ as $\sigma \to +\infty$. Indeed, if there exists $\bar{\mathfrak{t}}>0$ and a divergent sequence $\{\sigma_n\}$
		such that $\mathfrak{t}_{\sigma_n}>\bar{\mathfrak{t}}$, then
		\[
		\mu =p\sigma_n \mathfrak{t}_{\sigma_n}^{p-1}-3\nu\mathfrak{t}_{\sigma_n}^{2}
		=\mathfrak{t}_{\sigma_n}^{2} (p\sigma_n\mathfrak{t}_{\sigma_n}^{p-3}-3\nu)
		> \bar{\mathfrak{t}}^{2} (p\sigma_n\bar{\mathfrak{t}}^{p-3}-3\nu)
		\to +\infty
		\]
		giving a contradiction.
	Thus
		$$\mathfrak{f}_{\sigma}(\mathfrak t_{\sigma}) = \mathfrak t_{\sigma}
		\Big(
		\frac{p-1}{p}\mu + \frac{p-3}{p}\nu t_\sigma^2\Big)
		\to 0  \text{ as } \sigma \to+\infty.$$
		As for  (\ref{fc}), since $\mathfrak{T}_{\sigma}$ satisfies
		\begin{equation}\label{eq:T}
			\mu = \sigma \mathfrak{T}_{\sigma}^{p-1}- \nu \mathfrak{T}_{\sigma}^{2}
		\end{equation}
		we deduce,
		as in item (\ref{fb}),
		that
		$\mathfrak{T}_{\sigma} \to 0$ as $\sigma\to+\infty$ and so, coming back to \eqref{eq:T}, we conclude.
	\end{proof}

Now we state a fundamental tool that will allow us to distinguish the nature of the ground states pairs, identifying whether they are semitrivial or vectorial (see also Remark \ref{rem:generale}). Its proof is quite technical and involves simple analytical arguments. So we postpone it in the Appendix \ref{AppLemma24}.

\begin{Lem}
\label{unionefacili}
Let $\mathfrak{h}_\beta (y):= y^{q}+(1-y)^{q}+2\beta y ^{q/2} (1-y)^{q/2}$, $y\in[0,1]$, $\beta\geq 0$ and $q>1$.
\begin{enumerate}[label=(\roman{*}), ref=\roman{*}]
	\item \label{facilei}If $\beta=0$, then $\mathfrak{h}_0(y) \leq 1$ and the equality holds only in the endpoints $y=0,1$.
	
	\item \label{facileiib}  If $q\in(3/2,2)$, then, for any fixed $\beta>0$, there exists a unique $y_\beta\in(0,1/2]$ such that $\mathfrak{h}_\beta(y_\beta)=\mathfrak{h}_\beta(1-y_\beta)=\max_{y\in[0,1]}\mathfrak{h}_\beta(y)>1$  and $\displaystyle \lim_{\beta\to 0^{+}} y_{\beta} 
		=
		0$. Moreover $y_\beta=1/2$ if and only if $\beta\geq q-1$.

	\item \label{facileiiib} If $q\in[2,3)$, then:
	\begin{enumerate}[label=(\alph{*}), ref=\alph{*}]
		\item\label{aiiib} for $\beta\in (0,2^{q-1}-1)$, $\mathfrak{h}_\beta(y)\leq 1$ and the equality holds just in the endpoints $y_\beta=0,1$;
		\item\label{biiib} for $\beta=2^{q-1}-1$, $\mathfrak{h}_\beta(y)\leq 1$ and, in particular, 
		$$
		\mathfrak h_{\beta}(y)= 1\ \ \text{ in }  \ \ 
		\begin{cases}
		0,1/2,1 & \mbox{ if } q\in (2,3)\\
		[0,1]&\mbox{ if } q=2;
		\end{cases}
		$$
		\item\label{ciiib} $\beta > 2^{q-1}-1$, then $\mathfrak h_{\beta}$ achieves its unique global maximum on $y_\beta=1/2$ and $\mathfrak{h}_\beta(1/2)> 1$.
	\end{enumerate}
	
%
%
\end{enumerate}
\end{Lem}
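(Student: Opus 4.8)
The plan is to reduce the whole statement to the sign of one explicit elementary function. Write $\rho:=q-1>0$. Since $\mathfrak{h}_\beta$ is continuous on $[0,1]$, symmetric under $y\mapsto 1-y$, and smooth on $(0,1)$, it is enough to study it on $[1/2,1]$; one records immediately the values $\mathfrak{h}_\beta(0)=\mathfrak{h}_\beta(1)=1$ and $\mathfrak{h}_\beta(1/2)=(1+\beta)2^{1-q}$, so that $\mathfrak{h}_\beta(1/2)$ is $<1$, $=1$, or $>1$ according as $\beta<2^{q-1}-1$, $\beta=2^{q-1}-1$, or $\beta>2^{q-1}-1$. The case $\beta=0$ is handled directly: for $q\ge1$ one has $t^{q}\le t$ on $[0,1]$, hence $\mathfrak{h}_0(y)=y^{q}+(1-y)^{q}\le y+(1-y)=1$, with equality iff $y^{q}=y$ and $(1-y)^{q}=1-y$, i.e. $y\in\{0,1\}$; this is \eqref{facilei}.

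For $\beta>0$ I would parametrise $(1/2,1)$ by $y=y(\tau):=e^{\tau}/(e^{\tau}+e^{-\tau})$, $\tau\in(0,+\infty)$, a smooth increasing bijection; using $y(1-y)=(2\cosh\tau)^{-2}$, $1-2y=-\tanh\tau$ and $y^{\rho}-(1-y)^{\rho}=2\sinh(\rho\tau)(2\cosh\tau)^{-\rho}$ one gets, after a short computation,
\[
\mathfrak{h}_\beta'\big(y(\tau)\big)=\frac{2q}{(2\cosh\tau)^{\rho}}\,\psi(\tau),\qquad \psi(\tau):=\sinh(\rho\tau)-\beta\sinh\tau=\sinh\tau\,\big(R(\tau)-\beta\big),
\]
where $R(\tau):=\sinh(\rho\tau)/\sinh\tau$. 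Thus on $(1/2,1)$ the sign of $\mathfrak{h}_\beta'$ is the sign of $R(\tau)-\beta$. The key point is the strict monotonicity of $R$ on $(0,+\infty)$: its numerator derivative $N(\tau):=\rho\cosh(\rho\tau)\sinh\tau-\sinh(\rho\tau)\cosh\tau$ satisfies $N(0)=0$ and $N'(\tau)=(\rho^{2}-1)\sinh(\rho\tau)\sinh\tau$, whence $R$ is strictly decreasing when $\rho<1$ and strictly increasing when $\rho>1$ (and $R\equiv1$ when $\rho=1$); together with $R(0^{+})=\rho$ and $R(\tau)\to0$ as $\tau\to+\infty$ (for $\rho<1$) or $R(\tau)\to+\infty$ (for $\rho>1$) this completely determines, for each $\beta$, the sign of $\psi$, i.e. the monotonicity intervals of $\mathfrak{h}_\beta$ on $[1/2,1]$.

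The three items then follow by bookkeeping, comparing $\beta$ with the thresholds $q-1$ and $2^{q-1}-1$ (which satisfy $2^{q-1}-1<q-1$ for $q\in(1,2)$ and $2^{q-1}-1>q-1$ for $q\in(2,3)$). For $q\in(3/2,2)$: if $0<\beta<q-1$ then $R(\tau_\beta)=\beta$ for a unique $\tau_\beta>0$, so $\mathfrak{h}_\beta$ increases on $[1/2,y_\beta^{*}]$ and decreases on $[y_\beta^{*},1]$ with $y_\beta^{*}:=y(\tau_\beta)$, hence $\max_{[0,1]}\mathfrak{h}_\beta=\mathfrak{h}_\beta(y_\beta^{*})>\mathfrak{h}_\beta(1)=1$, attained exactly at $y_\beta^{*}$ and $1-y_\beta^{*}$; set $y_\beta:=1-y_\beta^{*}\in(0,1/2)$, and since $R(\tau)\to0$ we get $\tau_\beta\to+\infty$ as $\beta\to0^{+}$, hence $y_\beta\to0$; if $\beta\ge q-1$ then $R(\tau)<\rho\le\beta$ on $(0,\infty)$, $\mathfrak{h}_\beta$ is strictly decreasing on $[1/2,1]$, $y_\beta:=1/2$ is the unique maximiser, and $\mathfrak{h}_\beta(1/2)=(1+\beta)2^{1-q}>1$ since $\beta\ge q-1>2^{q-1}-1$; this proves \eqref{facileiib}. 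For $q=2$ one reads everything off from $\psi=(1-\beta)\sinh\tau$ and $\mathfrak{h}_\beta(1/2)=(1+\beta)/2$ (in particular $\mathfrak{h}_1\equiv1$), which settles the three sub-cases of \eqref{facileiiib}. For $q\in(2,3)$: if $0<\beta\le q-1$ then $R(\tau)>\rho\ge\beta$ on $(0,\infty)$ and $\mathfrak{h}_\beta$ strictly increases on $[1/2,1]$ from $(1+\beta)2^{1-q}<1$ to $1$; if $q-1<\beta<2^{q-1}-1$ then (with $\tau_\beta$ as above, now $\psi<0$ before $\tau_\beta$ and $>0$ after) $\mathfrak{h}_\beta$ decreases on $[1/2,y_\beta^{*}]$ then increases back to $1$, staying below $1$ off $\{0,1\}$; in both situations $\mathfrak{h}_\beta\le1$ with equality only at $\{0,1\}$, which is \eqref{aiiib}; if $\beta=2^{q-1}-1$ the shape is the same but $\mathfrak{h}_\beta(1/2)=1$, so the equality set is $\{0,1/2,1\}$, item \eqref{biiib}; and if $\beta>2^{q-1}-1$ then $\mathfrak{h}_\beta(1/2)=(1+\beta)2^{1-q}>1$ while $\mathfrak{h}_\beta(y_\beta^{*})<1$ and $\mathfrak{h}_\beta(1)=1$, so $1/2$ is the unique global maximiser with $\mathfrak{h}_\beta(1/2)>1$, item \eqref{ciiib}.

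I expect the only genuinely non-routine steps to be the choice of the substitution $y=e^{\tau}/(e^{\tau}+e^{-\tau})$, which collapses $\mathfrak{h}_\beta'$ into the transparent $\sinh(\rho\tau)-\beta\sinh\tau$, and the monotonicity of $R(\tau)=\sinh(\rho\tau)/\sinh\tau$ on $(0,\infty)$ via the auxiliary $N$; after that the argument is a finite case analysis, whose one recurring trap is that the two thresholds $q-1$ and $2^{q-1}-1$ exchange order at $q=2$, so $q-1$ plays structurally different roles in \eqref{facileiib} and in \eqref{facileiiib}.
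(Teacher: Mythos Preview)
Your argument is correct and takes a genuinely different route from the paper's. The paper works on $(0,1/2]$ with the substitution $t=y/(1-y)\in(0,1]$, which reduces the sign of $\mathfrak{h}_\beta'$ to that of $\mathfrak{g}_\beta(t)=t^{q-1}-1+\beta t^{q/2-1}-\beta t^{q/2}$; it then studies $\mathfrak{g}_\beta'$ through the auxiliary $\mathfrak{r}_\beta(t)=2(q-1)t^{q/2}-\beta qt+\beta(q-2)$, whose own shape (monotone or unimodal, sign at endpoints, value at its extremum) depends on the regime, forcing a longer case analysis. Your hyperbolic parametrisation $y=e^{\tau}/(e^{\tau}+e^{-\tau})$ collapses everything to $\psi(\tau)=\sinh(\rho\tau)-\beta\sinh\tau$ and hence to a single monotone function $R(\tau)=\sinh(\rho\tau)/\sinh\tau$, with range $(0,\rho)$ or $(\rho,\infty)$ according to the sign of $\rho-1$; this makes the number and location of critical points of $\mathfrak{h}_\beta$ on $[1/2,1]$ immediate, and the remaining work is only the comparison $\mathfrak{h}_\beta(1/2)=(1+\beta)2^{1-q}$ versus $1$. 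The two substitutions are related by $t=e^{-2\tau}$ (you work on $[1/2,1]$, the paper on $[0,1/2]$), but your choice linearises the exponents and yields a globally monotone auxiliary function, which is what buys the cleaner bookkeeping. The paper's approach, in exchange, stays entirely within algebraic manipulations of power functions and avoids introducing hyperbolic identities.
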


\section{Existence and nonexistence results}\label{sec:EXNONEX}

In this section we prove the nonexistence result stated in  Theorem 
\ref{th:nonexistence} and the existence of a nontrivial  radial ground state of \eqref{system}, i.e. the first part of Theorem \ref{th:gs}.

\subsection{A Pohozaev identity and the nonexistence result}
As it is usual for elliptic equations, 
the solutions satisfy a suitable identity called {\em Pohozaev identity}.
It can be obtained, at least formally, by the relation
\[
{\frac{d}{dt} I_{\lambda,\beta}(u_t,v_t)}\Big|_{t=1}=0 \ \ \text{ where } \ \ u_t(x):=u(x/t).
\]
In the next lemma we get it rigorously. The proof is indeed standard, however we revise  the argument for the sake of completeness. 
In what follows	$B_{R}$  stands for the ball centred in $0\in \mathbb R^{3}$ and radius $R>0$.

\begin{lemma}\label{lem:Pohozaev}
If $(u,v,\phi)$   is a  solution of \eqref{eq:system3} with $(u,v)\in {\rm H} \cap (L^{2q}(\mathbb{R}^3)\times L^{2q}(\mathbb{R}^3))\cap(L_{\rm loc}^\infty(\mathbb{R}^3)\times L_{\rm loc}^\infty(\mathbb{R}^3))$, with {\em fixed sign} if $q\in[1/2,1]$, then it satisfies the Pohozaev identity
\begin{equation}
\label{Pohosyst}
\begin{split}
&\frac{1}{2} (\|\nabla u \|_2^2 + \|\nabla v \|_2^2)
+ \frac{3}{2} (\| u \|_2^2 + \| v \|_2^2)
+ \frac{5}{4}\lambda  \int (u^2+v^2) \phi 
=\frac{3}{2q}\left(\|u\|_{2q}^{2q}+\|v\|_{2q}^{2q}
+2\beta\int |u|^q |v|^q\right).
\end{split}
\end{equation}
\end{lemma}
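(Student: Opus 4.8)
The plan is to prove the Pohozaev identity \eqref{Pohosyst} by the standard route: multiply each equation by the "dilation generator" $x\cdot\nabla u$ (resp. $x\cdot\nabla v$, $x\cdot\nabla\phi$), integrate over a ball $B_R$, integrate by parts, and then let $R\to+\infty$ along a well-chosen sequence of radii. The hypotheses $(u,v)\in \textrm H\cap(L^{2q}\times L^{2q})\cap(L^\infty_{\rm loc}\times L^\infty_{\rm loc})$ (plus the fixed-sign assumption when $q\in[1/2,1]$) are exactly what is needed to justify the regularity of $u,v,\phi$ and the integrability of every term appearing, so the first step is to record that by elliptic regularity (bootstrap, as sketched in the introduction) the solution is smooth enough for all the manipulations below to make sense classically.

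The core computation is three Rellich--Pohozaev type identities on $B_R$. For a function $w$ solving $-\Delta w = g$ one has
\[
\int_{B_R}(-\Delta w)(x\cdot\nabla w)\,dx
= \frac{1}{2}\int_{B_R}|\nabla w|^2\,dx
-\int_{\partial B_R}\Bigl[(x\cdot\nu)\frac{|\nabla w|^2}{2}-(x\cdot\nabla w)(\partial_\nu w)\Bigr]d\sigma ,
\]
while for a nonlinearity term $\int_{B_R} G'(w)(x\cdot\nabla w)\,dx = 3\int_{B_R}G(w)\,dx - \int_{\partial B_R}(x\cdot\nu)G(w)\,d\sigma$, applied with $G$ running over $\frac{1}{2}w^2$, $\frac{1}{2q}|w|^{2q}$ and (for the coupling) the function $\frac{1}{q}|u|^q|v|^q$ handled jointly in the two equations. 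The Coulomb term needs the Poisson equation $-\Delta\phi=4\pi(u^2+v^2)$: testing it against $x\cdot\nabla\phi$ and combining with testing the $u$- and $v$-equations against $\phi\,x\cdot\nabla u$ and $\phi\,x\cdot\nabla v$ gives, after integration by parts, the coefficient $\frac{5}{4}\lambda$ in front of $\int(u^2+v^2)\phi$; I would present this as the known identity for Schr\"odinger--Poisson systems, the key algebraic inputs being \eqref{lastint} and the symmetry $\int u^2\phi + \int v^2\phi = \tfrac14\pi^{-1}\|\nabla\phi\|_2^2$. Summing the three weighted equations, all the volume terms assemble into exactly \eqref{Pohosyst} plus a boundary term $\mathcal B(R)$.

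The main obstacle — and the only genuinely non-routine point — is showing that the boundary terms vanish along a sequence $R_n\to+\infty$. One knows only that the relevant integrands, namely $|\nabla u|^2$, $u^2$, $|u|^{2q}$, $\phi\,|u|\,|\nabla u|$, etc., belong to $L^1(\mathbb R^3)$: $|\nabla u|^2, u^2\in L^1$ since $u\in H^1$; $|u|^{2q}\in L^1$ by hypothesis; $|u|^q|v|^q\in L^1$ by Young's inequality; and the Coulomb boundary contributions are controlled using $\phi\in L^6$ (since $\phi\in D^{1,2}$), $\nabla\phi\in L^2$, and $u\in L^6$. If $h\in L^1(\mathbb R^3)$ then $\liminf_{R\to\infty} R\int_{\partial B_R}h\,d\sigma = 0$, since otherwise $\int h = \int_0^\infty\!\bigl(\int_{\partial B_R}h\bigr)dR \gtrsim \int^\infty \frac{dR}{R}=\infty$; this yields a single sequence $R_n\to\infty$ along which $\mathcal B(R_n)\to 0$ provided each boundary integrand is $R^{-1}$ times an $L^1$ function — which is the case, because every boundary term is linear in $x$ and hence bounded by $R$ times a pointwise-integrable quantity. (The sign hypothesis for $q\in[1/2,1]$ enters only to guarantee, via the maximum principle, the regularity/positivity needed to run the bootstrap and justify the classical integration by parts when $2q$ is small.) Extracting a common $R_n$ for finitely many $L^1$ functions is immediate, so letting $n\to\infty$ in the $B_{R_n}$ identity kills $\mathcal B(R_n)$ and leaves \eqref{Pohosyst}.
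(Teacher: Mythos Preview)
Your proposal is correct and follows essentially the same route as the paper: multiply each of the three equations by $x\cdot\nabla u$, $x\cdot\nabla v$, $x\cdot\nabla\phi$ respectively, integrate over $B_R$ using the standard Rellich--Pohozaev identities (the paper records these as \eqref{P1}--\eqref{P4}), sum, and eliminate the boundary terms along a sequence $R_n\to\infty$ via the $L^1$ argument you describe (the paper cites \cite{BL} for this step) before invoking \eqref{lastint}. Two minor slips to fix when you write it out: in dimension $3$ the volume term in your displayed identity for $\int_{B_R}(-\Delta w)(x\cdot\nabla w)$ should be $-\tfrac12\int_{B_R}|\nabla w|^2$, and likewise $\int_{B_R}G'(w)(x\cdot\nabla w)=-3\int_{B_R}G(w)+R\int_{\partial B_R}G(w)$; also, the fixed-sign hypothesis for $q\in[1/2,1]$ is used not for the maximum principle but simply so that $|u|^{q-2}u$ and $|v|^{q-2}v$ are nonsingular and identity \eqref{P4} makes sense.
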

\begin{proof}
Let $(u,v,\phi)$ be a solution of \eqref{eq:system3}. If $q\in[1/2,1]$, without loss of generality, we can assume $u,v>0$.\\
Preliminarily we recall (see also \cite[Proposition 2.1]{BL} and \cite[Lemma 3.1]{DM}) that  for any $R>0$
\begin{align}
&\int_{B_{R}} - \Delta u \, x\cdot \nabla u = -\frac{1}{2} \int_{B_{R}} |\nabla u|^{2} -\frac1R\int_{\partial B_{R}} |x\cdot \nabla u|^{2} +\frac{R}{2} \int_{\partial B_{R}} |\nabla u|^{2}, \label{P1}\\
&\int_{B_{R}} \phi u \, x\cdot \nabla u = -\frac{1}{2} \int_{B_{R}} u^{2} \, x\cdot \nabla \phi -\frac{3}{2}\int_{B_{R}} \phi u^{2}+\frac{R}{2}\int_{\partial B_{R}} \phi u^{2}, \label{P2}\\
&\int_{B_{R}} g(u) \, x\cdot \nabla u = -3\int_{B_{R}} G(u) +R\int_{\partial B_{R}} G(u),\label{P3}\\
&\int_{B_R} |u|^{q-2} u |v|^q  x \cdot \nabla u + |u|^q |v|^{q-2} v x \cdot \nabla v
= -\frac{3}{q} \int_{B_R} |u|^q |v|^q + \frac{R}{q} \int_{\partial B_{R}} |u|^q |v|^q, \label{P4}
\end{align}
where $g:\mathbb R\to \mathbb R$ is a continuous function with primitive $\displaystyle G(s) = \int_{0}^{s} g(\tau)d\tau$.\\
Observe that all the previous integrals make sense due to the regularity of $u,v,\phi$. 
In particular, since $u \in L_{\rm loc}^\infty (\mathbb{R})$, then $\Delta u\in W_{\rm loc}^{2,p} (\mathbb{R}^3)$ for all $p\geq 1$, the integral in the left hand side of \eqref{P1} is well defined.\\
Then, multiplying  the first equation in \eqref{eq:system3} by $x\cdot\nabla u$, integrating on $B_{R}$, and taking into account \eqref{P1}, \eqref{P2}, \eqref{P3}, \eqref{P4}, we get
\begin{equation}\label{eq:Poh1}
\begin{split}
&\frac{1}{2} \int_{B_{R}}|\nabla u|^{2}
+ \frac{3}{2}\int_{B_{R}}u^{2}
+\frac{\lambda}{2}\int_{B_{R}} u^{2} \, x\cdot\nabla \phi
+\frac{3\lambda}{2}\int_{B_{R}} \phi u^{2} 
-\frac{3}{2q} \int_{B_{R}} |u|^{2q}\\
&\quad
+\beta\int_{B_{R}} |v|^{q}|u|^{q-2}u \, x\cdot \nabla u 
=
-\frac1R \int_{\partial B_{R}}|x\cdot \nabla u|^{2}
+\frac{R}{2}\int_{\partial B_{R}} |\nabla u|^{2}
+\frac{R}{2} \int_{\partial B_{R}}u^{2}\\
&\quad
+\frac{\lambda R}{2}\int_{\partial B_{R}}\phi u^{2}
- \frac{R}{2q}\int_{\partial B_{R}} |u|^{2q}.
\end{split}
\end{equation}
In a similar way, from the second equation  in \eqref{eq:system3} we infer
\begin{equation}
\label{eq:Poh2}
\begin{split}
&\frac{1}{2} \int_{B_{R}}|\nabla v|^{2}
+ \frac{3}{2}\int_{B_{R}}v^{2}
+\frac{\lambda}{2}\int_{B_{R}} v^{2} \, x \cdot \nabla \phi 
+\frac{3\lambda}{2}\int_{B_{R}} \phi v^{2}
-\frac{3}{2q} \int_{B_{R}} |v|^{2q} \\
&\quad
+\beta\int_{B_{R}} |u|^{q}|v|^{q-2}v \, x\cdot \nabla v
= 
-  \frac1R \int_{\partial B_{R}}|x\cdot \nabla v|^{2}
+\frac{R}{2} \int_{\partial B_{R}} |\nabla v|^{2}
+\frac{R}{2} \int_{\partial B_{R}} v^{2}\\
&\quad
+\frac{\lambda R}{2} \int_{\partial B_{R}} \phi v^{2}
- \frac{R}{2q} \int_{\partial B_{R}} |v|^{2q}
\end{split}
\end{equation}
and, from the third one, multiplying by $x\cdot\nabla\phi$, we deduce
\begin{equation}\label{eq:Poh3}
\frac{1}{2}\int_{B_{R}}|\nabla \phi|^{2}
+4\pi \int_{B_{R}}  (u^{2}+v^{2}) \, x\cdot\nabla \phi
= -\frac1R \int_{\partial B_{R}}|x\cdot\nabla \phi|^{2}
+\frac{R}{2} \int_{\partial B_{R}}|\nabla \phi|^{2}.
\end{equation}
Then, summing up \eqref{eq:Poh1} and \eqref{eq:Poh2}, taking into account \eqref{eq:Poh3} and \eqref{P4}
we arrive at
\begin{align*}
&\frac12\int_{B_{R}}(|\nabla u|^{2} + |\nabla v|^{2})
+ \frac32\int_{B_{R}}(u^{2}+  v^{2})
-\frac{\lambda}{16\pi}\int_{B_{R}}|\nabla \phi|^{2}
+\frac{3\lambda}{2} \int_{B_{R}}(u^{2}+v^{2})\phi\\
&\quad
-\frac{3}{2q}\int_{B_{R}} (|u|^{2q}+|v|^{2q} 
+2\beta |u|^{q}|v|^{q})
=
-\frac1R \int_{\partial B_{R}}(|x\cdot \nabla u|^{2}+|x\cdot \nabla v|^{2})
+\frac{R}{2}\int_{\partial B_{R}}( |\nabla u|^{2}+|\nabla v|^{2})\\
&\quad
+\frac{R}{2} \int_{\partial B_{R}}(u^{2}+v^{2})
+\frac{\lambda R}{2}\int_{\partial B_{R}}\phi (u^{2}+v^{2})
- \frac{R}{2q}\int_{\partial B_{R}} (|u|^{2q}+|v|^{2q})
+ \frac{\lambda}{8\pi R} \int_{\partial B_{R}}|x\cdot\nabla \phi|^{2}\\
&\quad
-\frac{R\lambda}{16 \pi} \int_{\partial B_{R}}|\nabla \phi|^{2}
-\frac{\beta R}{q}\int_{\partial B_{R}}|u|^{q}|v|^{q}.
\end{align*}
Arguing as in \cite[pag. 321]{BL}, there exists  a suitable sequence $R_{n}\to+\infty$ on which the right hand side 
above tends to zero. 
Thus, passing to the limit we deduce that
\begin{align*}
&\frac12 (\|\nabla u\|_2^{2} + \|\nabla v\|_2^{2})
+ \frac32 (\|u\|_2^{2} +  \|v\|_2^{2})
-\frac{\lambda}{16\pi}\|\nabla \phi\|_2^{2}
+\frac{3}{2}\lambda \int (u^{2}+v^{2})\phi\\
&\qquad\qquad
=\frac{3}{2q} \Big(\|u\|_{2q}^{2q}+\|v\|_{2q}^{2q} +2\beta \int |u|^{q}|v|^{q}\Big).
\end{align*}
Hence, using \eqref{lastint}, we achieve the conclusion.
\end{proof}

With the Pohozaev identity \eqref{Pohosyst}, we can show easily our nonexistence result. Indeed we have

\begin{proof}[Proof of Theorem \ref{th:nonexistence}]
Let $(u,v)\in {\rm H} \cap (L^{2q}(\mathbb{R}^3)\times L^{2q}(\mathbb{R}^3))$ be a nontrivial solution of \eqref{system} for $q\in [1/2,1]\cup [3,+\infty[ $.
Using the Nehari identities \eqref{N1} and \eqref{N2} and the Pohzaev identity \eqref{Pohosyst} we have 
\begin{align*}
0
&=
\|\nabla u \|_2^2 + \|\nabla v \|_2^2
+ \| u \|_2^2 + \| v \|_2^2 
+ \lambda  \int  (u^2 + v^2) \phi_{u,v}
- \|u\|_{2q}^{2q} - \|v\|_{2q}^{2q}
-2\beta\int |u|^q |v|^q\\
&=
\left(1- \frac{q}{3}\right) (\|\nabla u \|_2^2 + \|\nabla v \|_2^2)
+ (1-q) (\| u \|_2^2 + \| v \|_2^2)
+ \left(1- \frac{5}{6}q\right)\lambda  \int  (u^2+v^2) \phi_{u,v},
\end{align*}
which is strictly negative for $q\geq 3$ or strictly positive for $q\leq 1$ and so we reach a contradiction.
\end{proof}

\subsection{Existence of a radial ground state}\label{32}

Here 
	we  find a radial ground state solution for our system \eqref{system}. As we have stated in the Introduction, to get compactness we restrict ourselves to radial functions. Thus, from now on, we will consider $\textrm H_{\textrm{r}}$ as functional space, even if several facts do not require symmetry assumptions.

We start showing that, as in \cite[Lemma 2.1]{RuizJFA}, the following properties hold.
\begin{lemma}\label{lem:Phi}
Let $q\in(1,3)$ and $\{(u_{n}, v_{n})\}\subset {\rm H}_{\emph r}$ be such that $(u_{n}, v_{n})\rightharpoonup (u,v)$ in ${\rm H}_{\emph r}$ as $n\to +\infty$. We have, as $n\to +\infty$,
\begin{align}
\phi_{u_{n}, v_{n}}
&\to
\phi_{u,v} \text{ in }D_{\emph r}^{1,2}(\mathbb R^{3}),\label{eq:10} \\
\int (u_{n}^{2}+v_{n}^{2})\phi_{u_{n}, v_{n}}
&\to
\int (u^{2}+v^{2}) \phi_{u, v} ,\label{eq:20}\\
\int |u_{n}|^{q}|v_{n}|^{q}
&\to
\int |u|^{q} |v|^{q}.\label{eq:30}
\end{align}
\end{lemma}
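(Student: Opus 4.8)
The plan is to establish the three convergences in the standard way used for Schrödinger--Poisson systems, adapting \cite[Lemma 2.1]{RuizJFA} to the vectorial setting. Throughout we work with the radial Sobolev space and exploit the compact embedding $H^1_{\textrm r}(\mathbb R^3)\hookrightarrow L^p(\mathbb R^3)$ for $p\in(2,6)$, which gives, up to a subsequence, $u_n\to u$ and $v_n\to v$ strongly in $L^p(\mathbb R^3)$ for every such $p$, and pointwise a.e.

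First I would prove \eqref{eq:10}. Set $\rho_n:=u_n^2+v_n^2$ and $\rho:=u^2+v^2$. Since $\phi_{u_n,v_n}$ solves $-\Delta\phi=4\pi\rho_n$, for any test function $\psi\in D^{1,2}_{\textrm r}(\mathbb R^3)$ one has $\int\nabla\phi_{u_n,v_n}\cdot\nabla\psi=4\pi\int\rho_n\psi$; testing with $\psi=\phi_{u_n,v_n}-\phi_{u,v}$ reduces the claim to showing $\int(\rho_n-\rho)(\phi_{u_n,v_n}-\phi_{u,v})\to0$. Using the Hardy--Littlewood--Sobolev inequality, $\|\phi_{u_n,v_n}\|_{6}\le C\|\rho_n\|_{6/5}\le C(\|u_n\|_{12/5}^2+\|v_n\|_{12/5}^2)$, which is bounded since $12/5\in(2,6)$ and $\{(u_n,v_n)\}$ is bounded in $\mathrm H_{\textrm r}$; moreover $\rho_n\to\rho$ strongly in $L^{6/5}(\mathbb R^3)$ because $u_n\to u$, $v_n\to v$ strongly in $L^{12/5}(\mathbb R^3)$. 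Then $\big|\int(\rho_n-\rho)(\phi_{u_n,v_n}-\phi_{u,v})\big|\le\|\rho_n-\rho\|_{6/5}\big(\|\phi_{u_n,v_n}\|_6+\|\phi_{u,v}\|_6\big)\to0$, and by \eqref{lastint} applied to the difference (i.e.\ $\|\nabla(\phi_{u_n,v_n}-\phi_{u,v})\|_2^2=4\pi\int(\rho_n-\rho)(\phi_{u_n,v_n}-\phi_{u,v})$) we get \eqref{eq:10}.

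Next, \eqref{eq:20} follows from \eqref{eq:10} together with the strong $L^{12/5}$ convergence: write
\begin{align*}
\int\rho_n\phi_{u_n,v_n}-\int\rho\,\phi_{u,v}
=\int(\rho_n-\rho)\phi_{u_n,v_n}+\int\rho(\phi_{u_n,v_n}-\phi_{u,v}),
\end{align*}
and bound the first term by $\|\rho_n-\rho\|_{6/5}\|\phi_{u_n,v_n}\|_6\to0$ and the second by $\|\rho\|_{6/5}\|\phi_{u_n,v_n}-\phi_{u,v}\|_6\to0$, the latter using \eqref{eq:10} and the Sobolev embedding $D^{1,2}(\mathbb R^3)\hookrightarrow L^6(\mathbb R^3)$. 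Finally, \eqref{eq:30} is the easiest: since $q\in(1,3)$ we have $2q\in(2,6)$, so $|u_n|^q\to|u|^q$ and $|v_n|^q\to|v|^q$ strongly in $L^2(\mathbb R^3)$ (because $t\mapsto|t|^q$ is continuous with the right growth, or directly because $u_n\to u$ in $L^{2q}$); hence $\int|u_n|^q|v_n|^q\to\int|u|^q|v|^q$ by the Cauchy--Schwarz inequality applied to the difference.

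The only mildly delicate point is making sure all the Lebesgue exponents invoked lie in the compactness range $(2,6)$: this is exactly where the restriction $q\in(1,3)$ (hence $2q<6$) and the choice of the auxiliary exponent $12/5$ for the Coulomb term enter. Everything else is a routine combination of Hardy--Littlewood--Sobolev, Sobolev embeddings, and the compact radial embedding, so I do not expect a genuine obstacle; the main care is bookkeeping of exponents and passing to a single subsequence along which all pointwise and strong convergences hold (which suffices by the usual subsequence argument, since the limits are identified).
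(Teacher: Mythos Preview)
Your proof is correct and follows essentially the same approach as the paper's own proof: both exploit the compact radial embedding to obtain $u_n\to u$, $v_n\to v$ in $L^{12/5}(\mathbb R^3)$, hence $\rho_n\to\rho$ in $L^{6/5}(\mathbb R^3)$, and then pair this with the $L^6$ control on $\phi$ coming from the Sobolev embedding of $D^{1,2}(\mathbb R^3)$; the treatment of \eqref{eq:20} and \eqref{eq:30} is identical. The only cosmetic difference is in \eqref{eq:10}: the paper phrases the argument via the Riesz representation theorem (showing the linear functionals $w\mapsto 4\pi\int\rho_n w$ converge in the dual of $D^{1,2}_{\textrm r}$), whereas you compute the $D^{1,2}$ norm of $\phi_{u_n,v_n}-\phi_{u,v}$ directly from the equation satisfied by the difference; these are two packagings of the same estimate.
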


\begin{proof}
Let us define on $D^{1,2}_{\textrm r}(\mathbb R^{3})$ the linear and continuous operators
\begin{align*}
T_{n}(w)
&:= \int  \nabla w \nabla \phi_{u_n,v_n}
\left(= 4\pi \int (u_{n}^{2}+v_{n}^{2})w\right),\\
T(w)
&:= \int  \nabla w \nabla \phi_{u,v}
\left( =  4\pi \int (u^{2} + v^{2})w\right).
\end{align*}
Then, due to the compact embedding of the radial functions we have
$$|T_{n}(w) - T(w)|
\leq 4\pi \|w\|_{6} \Big(\|u_{n}^{2} - u^{2}\|_{6/5}+ \|v_{n}^{2} - v^{2}\|_{6/5}\Big)
\leq  \varepsilon_{n} \|\nabla w\|_2. $$
Hence 
 $T_{n}- T \to 0$ as  operators on $D_{\textrm{r}}^{1,2}(\mathbb R^{3})$, and  by  the Riesz Theorem
 this implies \eqref{eq:10}.\\
Convergence \eqref{eq:20} follows from
$$\phi_{u_{n}, v_{n}} \to \phi_{u,v} \text{ in } L^{6}(\mathbb R^{3}) \quad \text{and}\quad
u_{n}^{2}+v_{n}^{2} \to u^{2}+v^{2}\text{ in } L^{6/5}(\mathbb R^{3}).$$
Finally, to get \eqref{eq:30}, we observe that, using again the compact embedding of the radial functions,
$$\| |u_{n}|^{q} - |u|^{q}\|_{2}, \ \  \| |v_{n}|^{q} - |v|^{q}\|_{2}\to 0.$$
Thus
\begin{align*}
\Big|\int |u_{n}|^{q} |v_{n}|^{q} - |u|^{q}|v|^{q} \Big|
&\leq
\int |u_{n}|^{q}  \Big | |v_{n}|^{q} - |v|^{q} \Big | + \int |v|^{q}  \Big| |u_{n}|^{q} - |u|^{q}  \Big|\\
&\leq
\|u_{n}\|_{2q}^{q}  \| | v_{n}|^{q} - |v|^{q} \|_{2}  + \|v\|_{2q}^{q}  \| |u_{n}|^{q} - |u|^{q} \|_{2}=\varepsilon_{n},
\end{align*}
concluding the proof.
\end{proof}


Let us consider now the {\em Nehari-Pohozaev  manifold}
\[
\mathcal{M}:=\left\{(u,v)\in \textrm{H}_{\textrm r} \setminus \{0\} : J_{\lambda,\beta}(u,v)=0 \right\}
\]
where
\begin{equation*}\label{eq:J}
\begin{split}
J_{\lambda,\beta}(u,v)
&:=
\frac{3}{2} (\|\nabla u \|_2^2 + \|\nabla v \|_2^2)
+ \frac{1}{2} (\| u \|_2^2 + \| v \|_2^2)
+ \frac{3}{4}\lambda  \int (u^2+v^2) \phi_{u,v} \\
&\quad
- \frac{4q-3}{2q} \left(
\|u\|_{2q}^{2q}+\|v\|_{2q}^{2q}
+2\beta \int |u|^q |v|^q
\right).
\end{split}
\end{equation*}

Observe that the condition $J_{\lambda,\beta}(u,v)=0$ can be obtained by a {\em linear combination} of the Nehari \eqref{N1}, \eqref{N2} and Pohozaev \eqref{Pohosyst} identities. Thus, $\mathcal{M}$ contains all  nontrivial radial critical points of $I_{\lambda,\beta}$.\\
%
Moreover, the following simple result assures us that any  couple $(u,v)\in {\rm H}_{\textrm r} \setminus\{0\}$ can be uniquely projected on $\mathcal M$ via $\gamma_{u,v}$ (see its definition in \eqref{gamma}) and gives us a further property of such a projection.
\begin{Lem}\label{Step0}
For any $(u,v)\in {\rm H}_{\emph r} \setminus\{0\}$
there exists a unique $t_{u,v}>0$
such that $\gamma_{u,v}(t_{u,v})\in \mathcal M$ and
\begin{equation}
\label{formulaStep0}
I_{\lambda,\beta} (\gamma_{u,v}(t_{u,v}))= \max_{t>0} I_{\lambda,\beta} (\gamma_{u,v}(t)).
\end{equation}
\end{Lem}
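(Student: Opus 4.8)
The plan is to study the scalar function $t\mapsto I_{\lambda,\beta}(\gamma_{u,v}(t))$ for fixed $(u,v)\in{\rm H}_{\rm r}\setminus\{0\}$, using the explicit formula for $I_{\lambda,\beta}(\gamma_{u,v}(t))$ displayed just before Lemma~\ref{lem:calculus}. Setting
\[
\mu:=\tfrac12(\|u\|_2^2+\|v\|_2^2),\quad \nu:=\tfrac12(\|\nabla u\|_2^2+\|\nabla v\|_2^2)+\tfrac{\lambda}{4}\int(u^2+v^2)\phi_{u,v},\quad \sigma:=\tfrac{1}{2q}\Big(\|u\|_{2q}^{2q}+\|v\|_{2q}^{2q}+2\beta\int|u|^q|v|^q\Big),
\]
we have $I_{\lambda,\beta}(\gamma_{u,v}(t))=\mu t+\nu t^3-\sigma t^{4q-3}=\mathfrak f_\sigma(t)$ with exponent $p=4q-3$. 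Since $(u,v)\neq(0,0)$ we get $\mu,\nu>0$, and $\sigma>0$ because $\beta\geq 0$ and at least one of $u,v$ is nontrivial (so $\|u\|_{2q}^{2q}+\|v\|_{2q}^{2q}>0$); moreover $q\in(3/2,3)$ forces $p=4q-3>3$. Hence Lemma~\ref{lem:calculus}\eqref{fa} applies directly.

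From Lemma~\ref{lem:calculus}\eqref{fa}, $\mathfrak f_\sigma$ has a unique critical point $\mathfrak t_\sigma>0$, which is its global maximum on $(0,+\infty)$; I would call this $t_{u,v}:=\mathfrak t_\sigma$. This immediately gives \eqref{formulaStep0}, namely $I_{\lambda,\beta}(\gamma_{u,v}(t_{u,v}))=\max_{t>0}I_{\lambda,\beta}(\gamma_{u,v}(t))$. The remaining point is to identify the critical points of $t\mapsto I_{\lambda,\beta}(\gamma_{u,v}(t))$ with the values of $t$ for which $\gamma_{u,v}(t)\in\mathcal M$. For this I would verify, by a direct differentiation of the explicit formula, the identity
\[
\frac{d}{dt}I_{\lambda,\beta}(\gamma_{u,v}(t))=\frac1t\,J_{\lambda,\beta}(\gamma_{u,v}(t)),
\]
exactly analogous to the relation $\mathcal J_{\lambda,0}(\zeta_u(t))=\frac{d}{dt}\mathcal I_{\lambda,0}(\zeta_u(t))$ recalled from \cite{RuizJFA}; indeed, differentiating $\mu t+\nu t^3-\sigma t^{p}$ gives $\mu+3\nu t^2-p\sigma t^{p-1}$, which equals $\frac1t$ times $\mu t+3\nu t^3-p\sigma t^{p}$, and the latter is precisely $J_{\lambda,\beta}(\gamma_{u,v}(t))$ once one reads off its coefficients from the definition of $J_{\lambda,\beta}$ (note $\frac{4q-3}{2q}\cdot 2=p\cdot\frac1{2q}\cdot 2$ matches, and the factors $\tfrac32$, $\tfrac12$, $\tfrac34\lambda$ on the quadratic and nonlocal terms come from $3\nu$ applied termwise). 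Therefore $\gamma_{u,v}(t)\in\mathcal M$ if and only if $t$ is a critical point of $t\mapsto I_{\lambda,\beta}(\gamma_{u,v}(t))$, which by uniqueness means $t=t_{u,v}$. This proves both existence and uniqueness of the projection.

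I do not expect any serious obstacle here: the statement is, as the authors say, a routine adaptation of the scalar case in \cite{RuizJFA} (their Lemma 3.3 is essentially our Lemma~\ref{lem:calculus}\eqref{fa}). The only mildly delicate bookkeeping is checking that the coefficients produced by $3\nu$ and by $p\sigma$ in $\frac{d}{dt}I_{\lambda,\beta}(\gamma_{u,v}(t))$ reproduce exactly the coefficients $\tfrac32$, $\tfrac34\lambda$, and $\tfrac{4q-3}{2q}$ appearing in the definition of $J_{\lambda,\beta}$; this is a one-line computation. A subtle point worth a sentence is that $\sigma>0$ strictly (so that the $-\sigma t^p$ term genuinely dominates for large $t$ and $\mathfrak t_\sigma$ is finite): this holds because the purely local term $\|u\|_{2q}^{2q}+\|v\|_{2q}^{2q}$ is already strictly positive for $(u,v)\neq(0,0)$, independently of the sign or size of $\beta$. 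With these observations in place the proof is complete.
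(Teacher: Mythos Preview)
Your proof is correct and essentially identical to the paper's. Both apply Lemma~\ref{lem:calculus}\eqref{fa} to $t\mapsto I_{\lambda,\beta}(\gamma_{u,v}(t))=\mu t+\nu t^{3}-\sigma t^{4q-3}$ and then use the identity $J_{\lambda,\beta}(\gamma_{u,v}(t))=t\,\dfrac{d}{dt}I_{\lambda,\beta}(\gamma_{u,v}(t))$ to identify the unique critical point with the unique $t$ for which $\gamma_{u,v}(t)\in\mathcal M$.
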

\begin{proof}
The existence and uniqueness of $t_{u,v}$ is an easy consequence of (\ref{fa}) in Lemma \ref{lem:calculus}, since
\begin{equation*}
\begin{split}
J_{\lambda,\beta}(\gamma_{u,v}(t))
&=
\frac{3}{2} t^3 (\|\nabla u \|_2^2 + \|\nabla v \|_2^2)
+ \frac{t}{2} (\| u \|_2^2 + \| v \|_2^2)
+ \frac{3}{4} \lambda t^3 \int (u^2+v^2) \phi_{u,v}\\
&\quad
- \frac{4q-3}{2q}t^{4q-3}\left(
\|u\|_{2q}^{2q}+\|v\|_{2q}^{2q}
+2\beta\int |u|^q |v|^q
\right)
\end{split}
\end{equation*}
and $q>3/2$.\\
Moreover, since
\[
J_{\lambda,\beta}(\gamma_{u,v}(t)) = t \frac{d}{dt} I_{\lambda,\beta} (\gamma_{u,v}(t)),
\]
we have that $t_{u,v}$ is the unique strictly positive critical point of $I_{\lambda,\beta} (\gamma_{u,v}(t))$ and so, again by (\ref{fa}) in Lemma \ref{lem:calculus}, we conclude.
\end{proof}


Now we are ready to find the ground state solutions of \eqref{system} by minimizing the functional $I_{\lambda,\beta}$ on $\mathcal M$. 

\begin{proof}[Proof of Theorem \ref{th:gs} (existence of a ground state)]
We divide the proof in several steps.\\
{\bf Step 1:} {\it $\mathcal M$ is bounded away from zero, i.e. $(0,0)\notin \partial\mathcal M$.}\\
Let $(u,v)\in \mathcal M$. Since 
$$2\int|u|^{q}|v|^{q}\leq \|u\|_{2q}^{2q} +\|v\|_{2q}^{2q}\leq C\Big( \|u\|^{2} + \|v\|^{2}\Big)^{q}$$
we deduce
\[
\frac12(\|u\|^{2} +\|v\|^{2})\leq  C \Big( \|u\|^{2} + \|v\|^{2} \Big)^{q},
\]
so that there exists $\rho>0$ such that $\|u\|^{2}+\|v\|^{2}\geq \rho>0$
and the conclusion holds.\\
{\bf Step 2:} $\mathfrak m_{\beta}:=\inf_{\mathcal M} I_{\lambda,\beta}>0$.\\
For $(u,v)\in \mathcal M$ we set, for simplicity,
\begin{equation*}
\begin{cases}\label{eq:abcd0}
a:=\|\nabla u\|_{2}^{2} + \|\nabla v\|_{2}^{2}, & b:=\|u\|_{2}^{2}+\|v\|_{2}^{2}, \vspace{5pt}\\
c:= \lambda\displaystyle\int (u^{2}+v^{2}) \phi_{u,v}, & \displaystyle d:=\|u\|_{2q}^{2q}+\|v\|_{2q}^{2q}+2\beta\int|u|^{q}|v|^{q}.
\end{cases}
\end{equation*}
If $k:=I_{\lambda,\beta}(u,v)$,  we have
\begin{equation*}
\begin{cases}
\displaystyle\frac{1}{2}a+\frac{1}{2}b+\frac{1}{4}c - \frac{1}{2q}d = k\medskip \\
\displaystyle\frac{3}{2}a+\frac{1}{2}b+\frac{3}{4}c - \frac{4q-3}{2q}d=0.
\end{cases}
\end{equation*}
In terms of $a,b,k$ the unknown $c$ is given by
$$0<c=2 \frac{(4q-3)k-(2q-3)a-2b(q-1)}{2q-3}.$$
Then taking into account Step 1, 
we have
\begin{equation}\label{eq:limitazione}
(2q-3)\rho<(2q-3) (a+b)<(2q-3)a+2(q-1)b<(4q-3)k,
\end{equation}
where $\rho>0$ is the constant found at the end of the previous step, meaning that $k$ is bounded away from zero.\\
{\bf Step 3:} {\it If $\{(u_{n},v_{n})\}$ is a  minimizing sequence for $I_{\lambda,\beta}$ on $\mathcal M$, then it is bounded. Hence, up to subsequence, it weakly converges to some $(\mathfrak  u_{\beta}, \mathfrak v_{\beta})$ in ${\rm H}_{\emph r}$.}\\
Let $\{(u_{n},v_{n})\}\subset \mathcal M$ such that $k_{n}:=I_{\lambda,\beta}(u_{n}, v_{n})\to \mathfrak m_{\beta}$. 
Setting for simplicity
\begin{equation}
\begin{cases}\label{eq:abcd}
\displaystyle a_{n}:=\|\nabla u_{n}\|_{2}^{2} + \|\nabla v_{n}\|_{2}^{2},
&   b_{n}:=\|u_{n}\|_{2}^{2}+\|v_{n}\|_{2}^{2}, \vspace{5pt}  \\
\displaystyle c_{n}:= \lambda\int (u_{n}^{2}+v_{n}^{2})\phi_{u_{n},v_{n}},
& \displaystyle d_{n}:=\|u_{n}\|_{2q}^{2q}+\|v_{n}\|_{2q}^{2q}+2\beta\int|u_{n}|^{q}|v_{n}|^{q},
\end{cases}
\end{equation}
arguing as in Step 2, see \eqref{eq:limitazione}, we get
$$(2q-3)(a_{n}+b_{n})<(4q-3)k_{n}\to (4q-3)\mathfrak m_{\beta}$$
and so the minimising sequence $\{(u_{n},v_{n})\}$ is bounded.\\
{\bf Step 4:}  {\it $\{(u_{n}, v_{n})\}$ strongly converges to $(\mathfrak u_{\beta}, \mathfrak v_{\beta})$ in ${\rm H}_{\emph r}$. Then 
$(\mathfrak u_{\beta}, \mathfrak v_{\beta})\in \mathcal M$ and it minimizes $I_{\lambda,\beta}$.}\\
Here is the scenario in which we need the radial setting.\\
Observe that, by the previous step, it follows that
\begin{equation}\label{eq:convL2}
	u_{n}\rightharpoonup \mathfrak u_{\beta}, \ \ v_{n}\rightharpoonup \mathfrak v_{\beta}, \text{ in } L^{2}(\mathbb R^{3}) \text{ and  in } D^{1,2}(\mathbb{R}^3)
	\end{equation}
and, eventually passing to a suitable subsequence,
	\begin{equation}\label{wls}
	\|\nabla \mathfrak u_{\beta}\|_{2}^{2}\leq \lim_{n}\|\nabla u_{n}\|^{2}_{2}, \quad  \|\nabla \mathfrak v_{\beta}\|_{2}^{2}\leq \lim_{n}\|\nabla v_{n}\|_{2}^{2}, \quad 
	\| \mathfrak u_{\beta}\|_{2}^{2}\leq \lim_{n}\| u_{n}\|^{2}_{2}, \quad \| \mathfrak v_{\beta}\|_{2}^{2}\leq \lim_{n}\| v_{n}\|_{2}^{2}.
	\end{equation}
Maintaining the  notations in \eqref{eq:abcd}, we define
$$
\overline a:=\lim_{n} a_{n}\,, \ \ \overline b:=\lim_{n} b_{n}\,, \ \ 
\overline c:=\lim_{n} c_{n} \,, \ \ \overline d:= \lim_{n} d_{n},
$$
where we are  assuming that the  limits exists (eventually passing to suitable subsequences) being $\{a_{n}\}$, $\{b_{n}\}$, $\{c_{n}\}$, $\{d_{n}\}$ bounded sequences (see the previous Step).\\
Observe also that, by Step 1,
\begin{equation}
\label{sumbar>0}
\overline{a}+\overline{b}>0.
\end{equation}
Moreover, the relations 
	$$I_{\lambda,\beta}(u_{n}, v_{n})\to  \mathfrak m_{\beta}\quad \text{and} \quad  J_{\lambda,\beta}(u_{n},v_{n})=0$$
	give
	\begin{equation}\label{eq:barre}
	\begin{cases}
	\displaystyle\frac{1}{2}\overline{a}+\frac{1}{2}\overline{b}+\frac{1}{4}\overline{c} - \frac{1}{2q}\overline{d} =  \mathfrak m_{\beta} \medskip \\
	\displaystyle\frac{3}{2}\overline{a}+\frac{1}{2}\overline{b}+\frac{3}{4}\overline{c} - \frac{4q-3}{2q}\overline{d}=0.
	\end{cases}
	\end{equation}
Thus, by the second equation in \eqref{eq:barre} and \eqref{sumbar>0} we get $\overline d>0$.\\
Hence, using an analogous notation as before for the pair $(\mathfrak{u}_{\beta},\mathfrak{v}_{\beta})$, namely
\begin{equation}
\begin{cases}\label{eq:abcdGS}
a:=\|\nabla \mathfrak u_{\beta}\|_{2}^{2} + \|\nabla \mathfrak v_{\beta}\|_{2}^{2}, &  b:=\|\mathfrak u_{\beta}\|_{2}^{2}+\|\mathfrak v_{\beta}\|_{2}^{2},\vspace{5pt}\\
c:= \lambda\displaystyle\int (\mathfrak u_{\beta}^{2}+\mathfrak v_{\beta}^{2}) \phi_{\mathfrak u_{\beta}, \mathfrak v_{\beta}} , & \displaystyle d:=\|\mathfrak u_{\beta}\|_{2q}^{2q}+\|\mathfrak v_{\beta}\|_{2q}^{2q}+2\beta\int|\mathfrak u_{\beta}|^{q}|\mathfrak v_{\beta}|^{q},
\end{cases}
\end{equation}
by \eqref{wls}, we have
\begin{equation}\label{eq:aabb}
a\leq \overline a \quad \text{and} \quad b\leq\overline b.
\end{equation}
Observe  that, due to Lemma \ref{lem:Phi} and to the compact embedding in the radial setting 
$$ \overline c=c \quad \text{and} \quad \overline d = d.$$
If $a+b<\overline a+\overline b$,
then, taking into account that $J_{\lambda,\beta}(u_{n},v_{n})=0$, we have that 
$J_{\lambda,\beta}(\mathfrak u_{\beta}, \mathfrak  v_{\beta})
<0$, meaning that $(\mathfrak u_{\beta}, \mathfrak v_{\beta})\notin\mathcal M$ and that $(\mathfrak u_{\beta}, \mathfrak v_{\beta})\neq (0,0)$.
This implies that $a,b,c,d>0$ and, by \eqref{eq:aabb}, also $\overline a, \overline b>0$.\\
Moreover, by Lemma \ref{Step0} there exists a unique $t_{\mathfrak u_{\beta}, \mathfrak v_{\beta}}>0$ such that $\gamma_{\mathfrak u_{\beta}, \mathfrak v_{\beta}}(t_{\mathfrak u_{\beta}, \mathfrak v_{\beta}})\in \mathcal M$ (see \eqref{gamma}).\\
Consider now, for $t\geq0$, the functions
\[
f(t)
= \frac{t^3}{2} {a}
+ \frac{t}{2} {b}
+ \frac{t^3}{4} {c}
- \frac{t^{4q-3}}{2q} {d},
\qquad
\overline{f}(t)
= \frac{t^3}{2} \overline{a}
+ \frac{t}{2} \overline{b}
+ \frac{t^3}{4} \overline{c}
- \frac{t^{4q-3}}{2q} \overline{d}.
\]
Note that
\[
f(t)= I_{\lambda,\beta} (\gamma_{\mathfrak u_{\beta}, \mathfrak v_{\beta}}(t))
\quad\text{and}\quad
tf'(t)=J_{\lambda,\beta} (\gamma_{\mathfrak u_{\beta}, \mathfrak v_{\beta}}(t)).
\]
The functions $f$ and $\overline{f}$ have both a unique critical point corresponding to the global maximum (see (\ref{fa}) in Lemma \ref{lem:calculus}).
In particular, the global maximizer of $f$ is $t_{\mathfrak u_{\beta}, \mathfrak v_{\beta}}$ and, by \eqref{eq:barre}, we deduce that $\overline f$ achieves the maximum in $t=1$. Moreover, since we are assuming
$a+b< \overline a+ \overline b$, it holds $f(t)<\overline f(t)$ for $t>0$. Hence $\gamma_{\mathfrak u_{\beta}, \mathfrak v_{\beta}}(t_{\mathfrak u_{\beta}, \mathfrak v_{\beta}}) \in \mathcal M$ and
\[
I_{\lambda,\beta}(\gamma_{\mathfrak u_{\beta}, \mathfrak v_{\beta}}(t_{\mathfrak u_{\beta}, \mathfrak v_{\beta}}))
=f(t_{\mathfrak u_{\beta}, \mathfrak v_{\beta}})
< \max_{t\geq 0} \overline f(t) = \mathfrak m_{\beta},
\]
which is a contradiction.\\
Hence, by \eqref{eq:aabb},  we infer $a=\overline a$ and $b=\overline b$,
so that, using \eqref{eq:convL2}, we get
$(u_{n},v_{n})\to (\mathfrak u_{\beta}, \mathfrak v_{\beta})$ in $\textrm{H}_{\textrm r}$.\\
{\bf Step 5:} {\em $(\mathfrak u_{\beta}, \mathfrak v_{\beta})$ is a regular point of $\mathcal M$, i.e. $J_{\lambda,\beta}'(\mathfrak u_{\beta}, \mathfrak v_{\beta})\neq 0$.}\\
Assume  by contradiction that $J_{\lambda,\beta}'(\mathfrak u_{\beta}, \mathfrak v_{\beta})=0$ so that we have
\begin{equation}\label{eq:barre2}
\begin{cases}
-3\Delta \mathfrak u_{\beta}+\mathfrak u_{\beta}+3\lambda \phi_{\mathfrak u_{\beta}, \mathfrak v_{\beta}} \mathfrak u_{\beta} - (4q-3)\left( |\mathfrak u_{\beta}|^{2q-2}
+\beta|\mathfrak u_{\beta}|^{q-2}|\mathfrak v_{\beta}|^{q} \right)\mathfrak u_{\beta}=0\medskip \\
-3\Delta \mathfrak v_{\beta}+\mathfrak v_{\beta}+3\lambda \phi_{\mathfrak u_{\beta},\mathfrak v_{\beta}} \mathfrak v_{\beta} - (4q-3)\left( |\mathfrak v_{\beta}|^{2q-2}+\beta|\mathfrak v_{\beta}|^{q-2}|\mathfrak u_{\beta}|^{q}
\right)\mathfrak v_{\beta} =0.
\end{cases}
\end{equation}
Then, under the notations \eqref{eq:abcdGS}, we have
\begin{equation*}
     \left\{
        \begin{array}{ll}
      \displaystyle{\frac{1}{2}a} +\displaystyle{\frac{1}{2}b}+\displaystyle{\frac{1}{4}c} -\frac{1}{2q}d =\mathfrak  m_{\beta}, \medskip \\
\displaystyle\frac{3}{2}a+\displaystyle\frac{1}{2}b+\displaystyle\frac{3}{4}c-\displaystyle\frac{4q-3}{2q} d=0, \medskip \\
3a+b+3c-(4q-3)d=0, \medskip \\
\displaystyle\frac{3}{2}a+\displaystyle\frac{3}{2}b+\displaystyle\frac{15}{4}c - \displaystyle3\frac{4q-3}{2q}d=0,
        \end{array}
        \right.
\end{equation*}        
where the third equation is simply $J_{\lambda,\beta}'(\mathfrak u_{\beta},\mathfrak v_{\beta})[\mathfrak u_{\beta},\mathfrak v_{\beta}]=0$ and, finally, the fourth equation is the Pohozaev identity  for \eqref{eq:barre2}.
The solution of the above system is given by
\[
a= -\frac{4q-3}{4(2q-3)}\mathfrak m_{\beta},
\quad
b= 3\frac{4q-3}{4(q-1)}\mathfrak m_{\beta},
\quad
c = -\frac{4q-3}{2(2q-3)}\mathfrak m_{\beta},
\quad
d= - \frac{3q}{4(2q-3)(q-1)}\mathfrak m_{\beta}.
\]
Since $q\in(3/2, 3)$, then $a<0$, which is impossible.\\
{\bf Step 6:} $I_{\lambda,\beta}'(\mathfrak u_{\beta},\mathfrak v_{\beta})=0$.\\
Thanks to the Lagrange multiplier rule we know that, for some $\ell\in \mathbb R$,
$$I_{\lambda,\beta}'(\mathfrak u_{\beta},\mathfrak v_{\beta})
= \ell J_{\lambda,\beta}'(\mathfrak u_{\beta},\mathfrak v_{\beta}).$$
We want to show that $\ell=0$.\\
By expliciting  the above identity we get
\begin{equation}\label{eq:1}
	-(3\ell-1)\Delta \mathfrak u_{\beta}
	+(\ell-1)\mathfrak u_{\beta}
	+(3\ell-1)\lambda\phi_{\mathfrak u_{\beta},\mathfrak v_{\beta}}\mathfrak u_{\beta}
	-((4q-3)\ell-1)
	\big[ |\mathfrak u_{\beta}|^{2q-2}
	+\beta|\mathfrak u_{\beta}|^{q-2}|\mathfrak v_{\beta}|^{q}\big]
	\mathfrak u_{\beta}=0
	\end{equation}
	and
	\begin{equation}\label{eq:2}
	-(3\ell-1)\Delta \mathfrak v_{\beta}
	+(\ell-1)\mathfrak v_{\beta}
	+(3\ell-1)\lambda\phi_{\mathfrak u_{\beta},\mathfrak v_{\beta}}\mathfrak v_{\beta}
	- ((4q-3)\ell-1)
	\big[ |\mathfrak v_{\beta}|^{2q-2}
	+\beta|\mathfrak v_{\beta}|^{q-2} |\mathfrak u_{\beta}|^{q}\big]
	\mathfrak v_{\beta}=0.
	\end{equation}
Now, multiplying \eqref{eq:1} and \eqref{eq:2} by $\mathfrak u_{\beta}$ and  $\mathfrak v_{\beta}$ respectively, integrating, and, finally, summing up the two identities obtained, we get, using again the notations in \eqref{eq:abcdGS},
\begin{equation*}\label{eq:lagrange}
(3\ell-1)a +(\ell -1)b+(3\ell-1)c-((4q-3)\ell-1)d=0.
\end{equation*} 
On the other hand, arguing as in Lemma \ref{lem:Pohozaev}, we can associate to \eqref{eq:1} and \eqref{eq:2} the  Pohozaev identity
\[
\frac{3\ell-1}{2}a+\frac{3}{2}(\ell-1)b+\frac54(3\ell-1)c-\frac{3}{2q}((4q-3)\ell-1)d=0.
\]
Then $a,b,c,d$ satisfy the system
\begin{equation*}
 \left\{
        \begin{array}{ll}
      \displaystyle{\frac{1}{2}a} +\displaystyle{\frac{1}{2}b}+\displaystyle{\frac{1}{4}c} -\frac{1}{2q}d = \mathfrak m_{\beta}, \medskip \\
\displaystyle\frac{3}{2}a+\displaystyle\frac{1}{2}b+\displaystyle\frac{3}{4}c-\displaystyle\frac{4q-3}{2q} d=0, \medskip \\
(3\ell-1)a +(\ell -1)b+(3\ell-1)c-((4q-3)\ell-1)d=0, \medskip \\
\displaystyle\frac{3\ell-1}{2}a+\frac{3}{2}(\ell-1)b+\frac54(3\ell-1)c-\frac{3}{2q}((4q-3)\ell-1)d=0.
        \end{array}
        \right.
\end{equation*}
The determinant of the matrix of the coefficients is
\[
-\frac{\ell (3\ell-1)(q-1)(2q-3)}{q}.
\]
The assumptions on $q$ imply that $q-1\neq 0$ and $2q-3 \neq 0$. Moreover, also $\ell \neq 1/3$. Indeed, if it were  
$\ell=1/3$, the third equation of the system above would be
$$-\frac23b - \frac{2(2q-3)}{3}d=0$$
which is impossible since $b,d>0$. Thus, if it were also $\ell\neq0$, then the determinant would be different from zero, meaning that the system would have a unique solution. In particular
$$d= -\frac{3q}{4(q-1)(2q-3)}\mathfrak m_{\beta}<0$$
which is impossible. Summing up it yields $\ell=0$, concluding the proof of the Step.
\end{proof}

\begin{Rem}\label{rem:positivita}
 Without loss of generality, since $(|\mathfrak u_{\beta}|,|\mathfrak v_{\beta}|)$ 
 is also a solution at the level $\mathfrak m_{\beta}$, applying the Maximum Principle,
  we can assume that, whenever
$ \mathfrak u_{\beta},\mathfrak v_{\beta}$  are nontrivial, they are strictly positive.
 \end{Rem}

\begin{Rem}\label{rem:Convergenza}
For future reference, we observe  that the statements of previous Steps 5 and 6 and the inequality
$$0<\|\mathfrak u_{\beta}\|^2 + \|\mathfrak v_{\beta}\|^{2} \leq
\frac{ 4q-3}{2q-3}\mathfrak m_{\beta},
$$
which follows by \eqref{eq:limitazione} in Step 2, hold for any pair of ground states.
\end{Rem}

Moreover, as an immediate consequence of Theorem \ref{th:gs} what we have just seen 
 we can prove the following further result.
\begin{Cor}
Let $(\mathfrak u_{\beta}, \mathfrak v_{\beta})\in {\rm H}_{\emph r}\setminus\{0\}$ be a ground state found in Theorem \ref{th:gs}. We have that
\[
\mathfrak m_{\beta}
=I_{\lambda,\beta}(\gamma_{\mathfrak u_{\beta}, \mathfrak v_{\beta}}(1))
= \inf_{(u,v)\in {\rm H}_{\emph r}\setminus\{0\} } \max_{t>0} I_{\lambda,\beta}(\gamma_{u,v}(t))
\]
\end{Cor}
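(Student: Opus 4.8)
The plan is to read off everything from Lemma \ref{Step0} together with the content of the existence proof of Theorem \ref{th:gs}. First I would note the trivial but key identity $\gamma_{u,v}(1)=(u,v)$ for every $(u,v)\in {\rm H}_{\rm r}\setminus\{0\}$; in particular $\gamma_{\mathfrak u_\beta,\mathfrak v_\beta}(1)=(\mathfrak u_\beta,\mathfrak v_\beta)$, so the first claimed equality $\mathfrak m_\beta=I_{\lambda,\beta}(\gamma_{\mathfrak u_\beta,\mathfrak v_\beta}(1))$ is nothing but the statement, already proved, that $(\mathfrak u_\beta,\mathfrak v_\beta)$ realizes $\mathfrak m_\beta=\inf_{\mathcal M}I_{\lambda,\beta}$ and that it is a critical point (hence lies in $\mathcal M$).

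For the second equality I would split into two inequalities. For ``$\geq$'', take an arbitrary $(u,v)\in {\rm H}_{\rm r}\setminus\{0\}$: Lemma \ref{Step0} gives a unique $t_{u,v}>0$ with $\gamma_{u,v}(t_{u,v})\in\mathcal M$ and $\max_{t>0}I_{\lambda,\beta}(\gamma_{u,v}(t))=I_{\lambda,\beta}(\gamma_{u,v}(t_{u,v}))\geq\inf_{\mathcal M}I_{\lambda,\beta}=\mathfrak m_\beta$; taking the infimum over $(u,v)$ yields
\[
\inf_{(u,v)\in {\rm H}_{\rm r}\setminus\{0\}}\ \max_{t>0} I_{\lambda,\beta}(\gamma_{u,v}(t))\ \geq\ \mathfrak m_\beta .
\]
For ``$\leq$'', observe that since $(\mathfrak u_\beta,\mathfrak v_\beta)\in\mathcal M$ and $\gamma_{\mathfrak u_\beta,\mathfrak v_\beta}(1)=(\mathfrak u_\beta,\mathfrak v_\beta)\in\mathcal M$, the uniqueness part of Lemma \ref{Step0} forces $t_{\mathfrak u_\beta,\mathfrak v_\beta}=1$; hence by \eqref{formulaStep0}
\[
\max_{t>0} I_{\lambda,\beta}(\gamma_{\mathfrak u_\beta,\mathfrak v_\beta}(t))
= I_{\lambda,\beta}(\gamma_{\mathfrak u_\beta,\mathfrak v_\beta}(1))
= I_{\lambda,\beta}(\mathfrak u_\beta,\mathfrak v_\beta)=\mathfrak m_\beta ,
\]
so the infimum over all $(u,v)$ is $\leq\mathfrak m_\beta$. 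Combining the two inequalities gives the claimed identity. (Equivalently, one may note that for every $(u,v)\in\mathcal M$ one has $t_{u,v}=1$, so $\inf_{\mathcal M}\max_{t>0}I_{\lambda,\beta}(\gamma_{u,v}(t))=\inf_{\mathcal M}I_{\lambda,\beta}=\mathfrak m_\beta$, while $\mathcal M\subset {\rm H}_{\rm r}\setminus\{0\}$ gives the reverse inequality.)

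There is no real obstacle here: the statement is packaged entirely inside Lemma \ref{Step0} (the projection $t_{u,v}$ exists, is unique, and realizes the maximum of $t\mapsto I_{\lambda,\beta}(\gamma_{u,v}(t))$) and the already-established facts that $(\mathfrak u_\beta,\mathfrak v_\beta)$ minimizes $I_{\lambda,\beta}$ on $\mathcal M$ and is a critical point. The only point requiring a moment's care is the remark that $\gamma_{u,v}(1)=(u,v)$, which, via uniqueness of the projection, shows $t_{u,v}=1$ precisely on $\mathcal M$.
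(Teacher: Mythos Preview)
Your proof is correct and follows essentially the same approach as the paper: both arguments use Lemma \ref{Step0} to project an arbitrary $(u,v)$ onto $\mathcal M$ (giving $\mathfrak m_\beta\leq\inf\max$), and then evaluate at the ground state $(\mathfrak u_\beta,\mathfrak v_\beta)\in\mathcal M$, where uniqueness forces $t_{\mathfrak u_\beta,\mathfrak v_\beta}=1$, to close the chain of inequalities. Your explicit remark that $\gamma_{u,v}(1)=(u,v)$ makes the step $t_{\mathfrak u_\beta,\mathfrak v_\beta}=1$ slightly more transparent than in the paper, but the logic is identical.
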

\begin{proof}
By Lemma \ref{Step0} we have that, for every $(u,v)\in {\rm H}_{\textrm r}\setminus\{0\}$,
\[
\mathfrak m_{\beta}  
= \min_{(u,v)\in \mathcal M} I_{\lambda,\beta}(u,v)
\leq I_{\lambda,\beta} (\gamma_{u,v}(t_{u,v}))
=\max_{t>0} I_{\lambda,\beta} (\gamma_{u,v}(t)).
\]
Then, passing to the infimum on $(u,v)\in {\rm H}_{\textrm r}\setminus\{0\}$, we get
\[
\mathfrak m_{\beta}
\leq \inf_{(u,v)\in {\rm H}_{\textrm r}\setminus\{0\}} \max_{t>0} I_{\lambda,\beta}(\gamma_{u,v}(t))
\leq \max_{t>0} I_{\lambda,\beta}(\gamma_{\mathfrak u_{\beta}, \mathfrak v_{\beta}}(t))
= I_{\lambda,\beta}(\gamma_{\mathfrak u_{\beta}, \mathfrak v_{\beta}}(1))
= 
\mathfrak m_{\beta}
\]
concluding the proof.
\end{proof}
%
%
%
%
%

We conclude this section showing a further interesting property.

By Remark \ref{rem:positivita}, in polar form the ground state
$(\mathfrak u_{\beta}, \mathfrak v_{\beta})$ can be written as
\begin{equation}\label{eq:polar}
(\mathfrak u_{\beta}, \mathfrak v_{\beta}) =(\varrho_{\beta} \cos\vartheta_{\beta},\varrho_{\beta} 
\sin\vartheta_{\beta}),
\quad \varrho_{\beta}^2 = \mathfrak u_{\beta}^{2}+\mathfrak v_{\beta}^{2} >0, \quad \vartheta_{\beta}=\vartheta_{\beta}(x)\in[0,\pi/2].
\end{equation}

Note that whenever the ground state is vectorial, then $\vartheta_{\beta}\in (0,\pi/2)$, while
for semitrivial ground states it is $\vartheta_{\beta}\equiv0$ or $\vartheta_{\beta}\equiv\pi/2$. 


The next lemma shows how, starting from  $(\mathfrak u_{\beta}, \mathfrak v_{\beta})$
we can obtain  a convenient  ground state
with the additional property of having  as angular coordinate a constant function $\theta_\beta$. \\
By Lemma \ref{unionefacili}, let $ y_{\beta}=\cos^2 \theta_\beta \in [0,1/2]$
be a  maximum point of $\mathfrak h_\beta$.

\begin{Lem}\label{Mbeta}
For  $\beta\geq0$, there exists $t_\beta > 0$ such that 
	$\gamma_{\varrho_{\beta}\cos\theta_{\beta},\varrho_{\beta}\sin\theta_{\beta}}(t_\beta)
	\in \mathcal{M}$
	and
	\begin{equation}\label{eq:gslevel}
	\mathfrak m_{\beta}
	= I_{\lambda,  \beta}(\gamma_{\varrho_{\beta}\cos\theta_{\beta},\varrho_{\beta}\sin\theta_{\beta}}(t_\beta)
	).
	\end{equation}
In particular $\gamma_{\varrho_{\beta}\cos\theta_{\beta},\varrho_{\beta}\sin\theta_{\beta}}(t_\beta)
$ is a  ground state solution.
\end{Lem}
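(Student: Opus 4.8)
The plan is to keep the radial density $\varrho_\beta$ of the ground state but replace its (possibly non-constant) angular coordinate $\vartheta_\beta(x)$ by the constant $\theta_\beta$ associated with the maximum point $y_\beta=\cos^2\theta_\beta$ of $\mathfrak h_\beta$, and then to project the new pair onto $\mathcal M$ via $\gamma$. First I would record that $\varrho_\beta=\sqrt{\mathfrak u_\beta^{2}+\mathfrak v_\beta^{2}}$ belongs to ${\rm H}^1_{\rm r}(\mathbb R^3)\setminus\{0\}$: it is radial, it is not identically zero because $\varrho_\beta^{2}>0$, and since $F(s,t)=\sqrt{s^2+t^2}$ is globally Lipschitz the chain rule for Sobolev functions gives $\varrho_\beta\in H^1$ with, a.e.,
\[
|\nabla\varrho_\beta|^{2}\le |\nabla\mathfrak u_\beta|^{2}+|\nabla\mathfrak v_\beta|^{2}
\]
by the Cauchy--Schwarz inequality applied to $(\mathfrak u_\beta,\mathfrak v_\beta)$ and $(\partial_j\mathfrak u_\beta,\partial_j\mathfrak v_\beta)$. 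Hence $(\varrho_\beta\cos\theta_\beta,\varrho_\beta\sin\theta_\beta)$ is a legitimate nontrivial element of ${\rm H}_{\rm r}$, and Lemma \ref{Step0} produces a unique $t_\beta:=t_{\varrho_\beta\cos\theta_\beta,\varrho_\beta\sin\theta_\beta}>0$ with $\gamma_{\varrho_\beta\cos\theta_\beta,\varrho_\beta\sin\theta_\beta}(t_\beta)\in\mathcal M$ and, by \eqref{formulaStep0}, $I_{\lambda,\beta}(\gamma_{\varrho_\beta\cos\theta_\beta,\varrho_\beta\sin\theta_\beta}(t_\beta))=\max_{t>0}I_{\lambda,\beta}(\gamma_{\varrho_\beta\cos\theta_\beta,\varrho_\beta\sin\theta_\beta}(t))$.

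The core of the argument is a termwise comparison of the two fibering maps $t\mapsto I_{\lambda,\beta}(\gamma_{\mathfrak u_\beta,\mathfrak v_\beta}(t))$ and $t\mapsto I_{\lambda,\beta}(\gamma_{\varrho_\beta\cos\theta_\beta,\varrho_\beta\sin\theta_\beta}(t))$. Because $\mathfrak u_\beta^{2}+\mathfrak v_\beta^{2}=\varrho_\beta^{2}=(\varrho_\beta\cos\theta_\beta)^2+(\varrho_\beta\sin\theta_\beta)^2$, the $L^2$-norms and the nonlocal term $\int(u^2+v^2)\phi_{u,v}$ coincide for the two pairs, while $\|\nabla(\varrho_\beta\cos\theta_\beta)\|_2^2+\|\nabla(\varrho_\beta\sin\theta_\beta)\|_2^2=\|\nabla\varrho_\beta\|_2^2\le\|\nabla\mathfrak u_\beta\|_2^2+\|\nabla\mathfrak v_\beta\|_2^2$ by the pointwise bound above. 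For the local nonlinearity, writing $y(x)=\cos^2\vartheta_\beta(x)$ and using $\vartheta_\beta\in[0,\pi/2]$,
\[
\|\mathfrak u_\beta\|_{2q}^{2q}+\|\mathfrak v_\beta\|_{2q}^{2q}+2\beta\int|\mathfrak u_\beta|^q|\mathfrak v_\beta|^q
=\int\varrho_\beta^{2q}\,\mathfrak h_\beta\bigl(y(x)\bigr)\,dx
\le \mathfrak h_\beta(y_\beta)\,\|\varrho_\beta\|_{2q}^{2q}
=\|\varrho_\beta\cos\theta_\beta\|_{2q}^{2q}+\|\varrho_\beta\sin\theta_\beta\|_{2q}^{2q}+2\beta\int|\varrho_\beta\cos\theta_\beta|^q|\varrho_\beta\sin\theta_\beta|^q,
\]
where the inequality is Lemma \ref{unionefacili} ($y_\beta$ maximizes $\mathfrak h_\beta$) and the last equality is the elementary identity with $y_\beta=\cos^2\theta_\beta$. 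Inspecting the explicit expression of $I_{\lambda,\beta}\circ\gamma$, passing to the new pair can only lower the coefficient of $t^3$ and, since the $t^{4q-3}$ term enters with a negative sign, can only raise the coefficient of $t^{4q-3}$; hence $I_{\lambda,\beta}(\gamma_{\varrho_\beta\cos\theta_\beta,\varrho_\beta\sin\theta_\beta}(t))\le I_{\lambda,\beta}(\gamma_{\mathfrak u_\beta,\mathfrak v_\beta}(t))$ for every $t>0$. Taking the maximum and using that $(\mathfrak u_\beta,\mathfrak v_\beta)\in\mathcal M$ forces $t_{\mathfrak u_\beta,\mathfrak v_\beta}=1$, so that $\max_{t>0}I_{\lambda,\beta}(\gamma_{\mathfrak u_\beta,\mathfrak v_\beta}(t))=I_{\lambda,\beta}(\mathfrak u_\beta,\mathfrak v_\beta)=\mathfrak m_\beta$ by Lemma \ref{Step0}, we get $I_{\lambda,\beta}(\gamma_{\varrho_\beta\cos\theta_\beta,\varrho_\beta\sin\theta_\beta}(t_\beta))\le\mathfrak m_\beta$. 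The reverse inequality is immediate, since $\gamma_{\varrho_\beta\cos\theta_\beta,\varrho_\beta\sin\theta_\beta}(t_\beta)\in\mathcal M$ gives $I_{\lambda,\beta}(\gamma_{\varrho_\beta\cos\theta_\beta,\varrho_\beta\sin\theta_\beta}(t_\beta))\ge\inf_{\mathcal M}I_{\lambda,\beta}=\mathfrak m_\beta$. This proves \eqref{eq:gslevel}.

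Finally, $\gamma_{\varrho_\beta\cos\theta_\beta,\varrho_\beta\sin\theta_\beta}(t_\beta)$ lies on $\mathcal M$ and attains $\mathfrak m_\beta=\inf_{\mathcal M}I_{\lambda,\beta}$, so it is a minimizer of $I_{\lambda,\beta}$ on $\mathcal M$; by the Lagrange-multiplier computation of Steps 5--6 in the proof of Theorem \ref{th:gs}, which applies to any such minimizer (cf. Remark \ref{rem:Convergenza}), it is a free critical point of $I_{\lambda,\beta}$, hence a solution of \eqref{system}, and being at level $\mathfrak m_\beta$ it is a ground state. The only genuinely delicate point I expect is the membership $\varrho_\beta\in {\rm H}^1_{\rm r}$ together with the pointwise gradient bound (standard, but using the chain rule for the Lipschitz map $\sqrt{\,\cdot\,}$ on $\mathfrak u_\beta^2+\mathfrak v_\beta^2$); the degenerate cases $y_\beta=0$ (so one component of the new pair vanishes and the produced ground state is semitrivial) or a ground state that is already semitrivial require no change in the argument, since the estimates above hold verbatim.
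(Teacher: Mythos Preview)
Your proof is correct and follows essentially the same route as the paper: project $(\varrho_\beta\cos\theta_\beta,\varrho_\beta\sin\theta_\beta)$ onto $\mathcal M$ via Lemma \ref{Step0}, compare the two fibering maps termwise using the gradient inequality and the maximality of $\mathfrak h_\beta(y_\beta)$, and sandwich with $\mathfrak m_\beta$. The only cosmetic differences are that the paper obtains the gradient bound from the polar-coordinate identity $\|\nabla\mathfrak u_\beta\|_2^2+\|\nabla\mathfrak v_\beta\|_2^2=\|\varrho_\beta\nabla\vartheta_\beta\|_2^2+\|\nabla\varrho_\beta\|_2^2$ (legitimate since the solutions are $C^2$) rather than your Cauchy--Schwarz/chain-rule argument, and that you make explicit the appeal to Steps~5--6 (via Remark \ref{rem:Convergenza}) to pass from ``minimizer on $\mathcal M$'' to ``critical point'', which the paper leaves tacit.
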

\begin{proof}
The conclusion will be achieved showing that the {\em projection} of
$(\varrho_{\beta} \cos\theta_{\beta}, \varrho_{\beta} \sin\theta_{\beta})$ in $\mathcal M$  reaches the ground state level.\\
Since $(\varrho_{\beta} \cos\theta_{\beta}, \varrho_{\beta} \sin\theta_{\beta}) \in \textrm{H}_{\textrm{r}}\setminus\{0\}$,
by Lemma \ref{Step0} there exists a unique $t_{\beta}>0$ such that
$$\gamma_{\varrho_{\beta}\cos\theta_{\beta},\varrho_{\beta}\sin\theta_{\beta}}(t_\beta)
\in \mathcal{M}.$$
Let us show it  is at  the ground state level. Observe that
\[
\mathfrak m_{\beta}
=
\inf_{(u,v)\in \textrm{H}_{\textrm{r}} \setminus\{0\}} \max_{t>0} I_{\lambda, \beta}(\gamma_{u,v}(t))
\leq
\max_{t>0} I_{\lambda, \beta}(
\gamma_{\varrho_{\beta}\cos\theta_{\beta},\varrho_{\beta}\sin\theta_{\beta}}(t)
)
=
I_{\lambda, \beta}(
\gamma_{\varrho_{\beta}\cos\theta_{\beta},\varrho_{\beta}\sin\theta_{\beta}}(t_\beta)
).
\]
	Moreover, 
	being $y_{\beta}=\cos^2 \theta_\beta $  a  maximum point of $\mathfrak h_\beta$ in $[0,1/2]$,
	\begin{equation*}\label{eq:serveilmassimo}
	\mathfrak u_{\beta}^{2q} +\mathfrak v_{\beta}^{2q} +2\beta \mathfrak u_{\beta}^{q} \mathfrak v_{\beta}^{q}
	= \mathfrak h_{\beta}(\cos^{2}\vartheta_{\beta}) \varrho_{\beta}^{2q}\\
	\leq \mathfrak h_{\beta} (y_{\beta}) \varrho_{\beta}^{2q}.
	\end{equation*}
	Then, since 
	\begin{equation*}
\|\nabla \mathfrak u_{\beta} \|_{2}^{2} + \|\nabla \mathfrak  v_{\beta} \|_{2}^{2}
= \|\varrho_{\beta}\nabla \vartheta_{\beta}\|_2^{2} + \|\nabla \varrho_{\beta}\|_2^{2}\geq 
\| \nabla \varrho_{\beta}\|_{2}^{2},
\end{equation*}
 for every $t>0$,
	\begin{align*}
	I_{\lambda, \beta}(
	\gamma_{\varrho_{\beta}\cos\theta_{\beta},\varrho_{\beta}\sin\theta_{\beta}}(t)
	)
	&=
	\frac{t^{3}}{2}\|\nabla \varrho_{\beta}\|_{2}^{2}+\frac{t}{2}\|\varrho_{\beta}\|_{2}^{2}+\frac{\lambda}{4}t^{3}\int \phi_{\varrho_{\beta}}\varrho_{\beta}^{2}
	-\frac{t^{4q-3}}{2q} \mathfrak h_{\beta} (y_{\beta}) \|\varrho_{\beta}\|_{2q}^{2q}\\
	&\leq
	\frac{t^{3}}{2}\|\nabla \varrho_{\beta}\|_{2}^{2}+\frac{t}{2}\|\varrho_{\beta}\|_{2}^{2}+\frac{\lambda}{4}t^{3}\int \phi_{\varrho_{\beta}}\varrho_{\beta}^{2}
	-\frac{t^{4q-3}}{2q}\int \left( \mathfrak u_{\beta}^{2q} +\mathfrak v_{\beta}^{2q} +2\beta \mathfrak u_{\beta}^{q} \mathfrak v_{\beta}^{q}\right)\\
	&\leq
	I_{ \lambda, \beta}(\gamma_{\mathfrak u_{\beta}, \mathfrak v_{\beta}}(t)).
	\end{align*}
	Thus, by Lemma \ref{Step0},
	$$
	I_{\lambda,\beta}(
	\gamma_{\varrho_{\beta}\cos\theta_{\beta},\varrho_{\beta}\sin\theta_{\beta}}(t_\beta)
	)
	\leq I_{ \lambda,\beta}(\gamma_{\mathfrak u_{\beta}, \mathfrak v_{\beta}}(t_{\beta})
	)
	\leq \mathfrak m_{\beta}, 
		$$
	concluding the proof.	
\end{proof}

%

\begin{Rem}\label{rem:generale}
Whenever  $\max_{[0,1]}\mathfrak h_{\beta}>1$, then $y_{\beta}\in(0,1/2]$
and  Lemma \ref{Mbeta} gives a vectorial ground state.
 \end{Rem}

\section{The case $\beta=0$}\label{sec:bzero}

In this section we prove  item \eqref{th:gsi} of Theorem \ref{th:gs}: if $\beta=0$,  each  ground state solution $(\mathfrak{u}_0,\mathfrak{v}_0)$ of \eqref{system} is semitrivial.

Consider system \eqref{system} for $\beta=0$, namely
\begin{equation}
\label{system0}
\begin{cases}
-\Delta u + u + \lambda \phi_{u,v} u = |u|^{2q-2} u \\
-\Delta v + v + \lambda \phi_{u,v} v = |v|^{2q-2} v
\end{cases}
\text{in }\mathbb{R}^3.
\end{equation}
Of course 
$(\mathfrak{w},0)$ and $(0,\mathfrak{w})$, where $\mathfrak{w}$ is a ground state of \eqref{eq:ruiz} obtained in \cite{RuizJFA} (see also Section \ref{sec2}), are solutions  of \eqref{system0}. 
Since for every $u\in H^1_{\textrm{r}}(\mathbb{R}^3)$, we have that 
$$\mathcal{I}_{\lambda,0}(u)=I_{\lambda,0}(u,0)=I_{\lambda,0}(0,u),$$
then, by \eqref{gslRuiz},
\[
\mathfrak n
=\mathcal{I}_{\lambda,0}(\mathfrak{w})
=I_{\lambda,0}(\mathfrak{w},0)=I_{\lambda,0}(0,\mathfrak{w})
=\inf_{u\in H_{\textrm r}^1(\mathbb{R}^3)\setminus\{0\}} \max_{t>0} I_{\lambda,0} (\zeta_u(t),0),
\]
where the path $\zeta_u$ has been defined in \eqref{zeta}.\\
Moreover
\begin{equation}
\label{gslleqgslR}
\mathfrak{m}_0 \leq \mathfrak{n}.
\end{equation}

First we prove that the ground state level of the two variables functional $I_{\lambda,0}$ is the same of the ground state level of the one variable functional $\mathcal{I}_{\lambda,0}$.
\begin{Lem}\label{Lembeta0}
$\mathfrak{m}_0 = \mathfrak{n}$.
\end{Lem}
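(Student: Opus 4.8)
The plan is to prove the reverse inequality $\mathfrak{m}_0 \geq \mathfrak{n}$, since $\mathfrak{m}_0 \leq \mathfrak{n}$ is already established in \eqref{gslleqgslR}. Let $(\mathfrak u_0, \mathfrak v_0)$ be the radial ground state of \eqref{system0} produced by Theorem \ref{th:gs}, so that $I_{\lambda,0}(\mathfrak u_0, \mathfrak v_0) = \mathfrak m_0$; by Remark \ref{rem:positivita} we may assume $\mathfrak u_0, \mathfrak v_0 \geq 0$. The key structural observation is that, with $\beta = 0$, the nonlocal term couples the two components only through the {\em sum} $\mathfrak u_0^2 + \mathfrak v_0^2$, exactly as in the polar decomposition \eqref{eq:polar}. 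So I would write $(\mathfrak u_0, \mathfrak v_0) = (\varrho_0 \cos\vartheta_0, \varrho_0 \sin\vartheta_0)$ with $\varrho_0^2 = \mathfrak u_0^2 + \mathfrak v_0^2$, observe that $\phi_{\mathfrak u_0, \mathfrak v_0} = \phi_{\varrho_0}$, and compare $(\mathfrak u_0, \mathfrak v_0)$ with the semitrivial competitor $(\varrho_0, 0)$.

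First I would record the pointwise inequality coming from Lemma \ref{unionefacili}\eqref{facilei}: for $\beta = 0$ we have $\mathfrak h_0(y) = y^q + (1-y)^q \leq 1$ on $[0,1]$, hence
\[
\mathfrak u_0^{2q} + \mathfrak v_0^{2q} = \mathfrak h_0(\cos^2\vartheta_0)\,\varrho_0^{2q} \leq \varrho_0^{2q}
\]
pointwise, so $\|\mathfrak u_0\|_{2q}^{2q} + \|\mathfrak v_0\|_{2q}^{2q} \leq \|\varrho_0\|_{2q}^{2q}$. Second, the gradient estimate from the polar form gives
\[
\|\nabla \mathfrak u_0\|_2^2 + \|\nabla \mathfrak v_0\|_2^2 = \|\varrho_0 \nabla\vartheta_0\|_2^2 + \|\nabla\varrho_0\|_2^2 \geq \|\nabla\varrho_0\|_2^2,
\]
while clearly $\|\mathfrak u_0\|_2^2 + \|\mathfrak v_0\|_2^2 = \|\varrho_0\|_2^2$ and $\int(\mathfrak u_0^2 + \mathfrak v_0^2)\phi_{\mathfrak u_0,\mathfrak v_0} = \int \varrho_0^2 \phi_{\varrho_0}$. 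Combining these, for every $t > 0$,
\[
I_{\lambda,0}\bigl(\gamma_{\varrho_0, 0}(t)\bigr) \leq I_{\lambda,0}\bigl(\gamma_{\mathfrak u_0, \mathfrak v_0}(t)\bigr),
\]
where $\gamma$ is the rescaling \eqref{gamma} — exactly the computation carried out in the proof of Lemma \ref{Mbeta}, now with the single function $\varrho_0$ playing the role of $\varrho_\beta$ and $\mathfrak h_0(y_0) = 1$.

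Then I would conclude as follows. Since $(\varrho_0, 0) \in \mathrm H_{\mathrm r}\setminus\{0\}$, Lemma \ref{Step0} gives a unique $t_0 > 0$ with $\gamma_{\varrho_0, 0}(t_0) \in \mathcal M$ realizing $\max_{t>0} I_{\lambda,0}(\gamma_{\varrho_0,0}(t))$; but $\gamma_{\varrho_0,0}(t_0)$ is semitrivial, its energy equals $\mathcal I_{\lambda,0}(\zeta_{\varrho_0}(t_0)) \geq \mathfrak n$ by \eqref{gslRuiz}. On the other hand, since $(\mathfrak u_0, \mathfrak v_0) \in \mathcal M$ it is the maximizer of $t \mapsto I_{\lambda,0}(\gamma_{\mathfrak u_0,\mathfrak v_0}(t))$ over $t > 0$ with maximum value $\mathfrak m_0$, so $I_{\lambda,0}(\gamma_{\mathfrak u_0,\mathfrak v_0}(t_0)) \leq \mathfrak m_0$. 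Chaining the inequalities,
\[
\mathfrak n \leq \mathcal I_{\lambda,0}(\zeta_{\varrho_0}(t_0)) = I_{\lambda,0}(\gamma_{\varrho_0,0}(t_0)) \leq I_{\lambda,0}(\gamma_{\mathfrak u_0,\mathfrak v_0}(t_0)) \leq \mathfrak m_0,
\]
which together with \eqref{gslleqgslR} yields $\mathfrak m_0 = \mathfrak n$. The only subtlety — and the step I would be most careful about — is ensuring $\varrho_0 \in H^1_{\mathrm r}(\mathbb R^3)\setminus\{0\}$ so that Lemma \ref{Step0} applies: positivity of $\varrho_0$ is clear from $\varrho_0^2 = \mathfrak u_0^2 + \mathfrak v_0^2 > 0$, and $\varrho_0 \in H^1$ follows from $|\nabla\varrho_0| \leq |\nabla\mathfrak u_0| + |\nabla\mathfrak v_0|$ pointwise (equivalently from the polar identity above, which shows $\|\nabla\varrho_0\|_2 < \infty$); radiality is inherited from $\mathfrak u_0, \mathfrak v_0$.
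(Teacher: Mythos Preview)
Your proof is correct and follows essentially the same approach as the paper: the polar decomposition $(u,v)=(\varrho\cos\vartheta,\varrho\sin\vartheta)$, the pointwise inequality $\mathfrak h_0(\cos^2\vartheta)\leq 1$ from Lemma~\ref{unionefacili}\eqref{facilei}, and the comparison along the rescaling path $\gamma$. The only cosmetic difference is that you argue with the specific ground state $(\mathfrak u_0,\mathfrak v_0)$ and a fixed $t_0$, whereas the paper carries out the same inequality for \emph{arbitrary} $(u,v)\in\textrm{H}_{\textrm r}\setminus\{0\}$ and then passes to the inf--max characterization directly; both routes yield $\mathfrak m_0\geq\mathfrak n$ in the same way.
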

\begin{proof}
If we use the {\sl polar coordinates} for the couples $(u,v)$, namely we write
$$(u, v) = (\varrho \cos\vartheta, \varrho\sin\vartheta)
\text{ where } \varrho ^2 = u^{2}+v^{2} \text{ and } \vartheta=\vartheta(x) \in [0,2\pi],$$
we have that
\begin{equation*}
\|\nabla u \|_{2}^{2} + \|\nabla v \|_{2}^{2}
= \|\varrho\nabla \vartheta\|_2^{2} + \|\nabla \varrho\|_2^{2}
\end{equation*}
and, by (\ref{facilei}) in Lemma \ref{unionefacili},
\begin{equation*}\label{eq:disub=0}
\|u\|_{2q}^{2q} +\|v\|_{2q}^{2q} 
= \int \varrho^{2q} \left( \cos^{2q}\vartheta + \sin^{2q}\vartheta\right)
\leq \| \varrho \|_{2q}^{2q}.
\end{equation*}
Then, for every $t>0$,
\begin{equation}
\label{eq:bo1}
I_{\lambda,0}(\gamma_{u,v}(t))
\geq
\frac{t^{3}}{2} \|\varrho \nabla \vartheta\|_2^{2} 
+ \frac{t^{3}}{2} \|\nabla \varrho \|_2^{2}
+ \frac{t}{2} \|\varrho \|_2^{2}
+ \frac{\lambda }{4}t^{3}\int \phi_\varrho \varrho^{2}
-\frac{t^{4q-3}}{2q} \| \varrho \|_{2q}^{2q}
\geq
\mathcal{I}_{\lambda,0}(\zeta_\varrho(t)). 
\end{equation}
Hence, \eqref{eq:bo1}, \eqref{gslRuiz}, and \eqref{gslleqgslR} imply
\[
\mathfrak{m}_0
=\inf_{(u,v)\in{\rm H}_{\textrm r}\setminus\{0\}} \max_{t>0} I_{\lambda,0} (\gamma_{u,v}(t))
\geq
\inf_{\varrho \in H_{\textrm r}^1(\mathbb{R}^3)\setminus\{0\}} \max_{t>0} I_{\lambda,0} (\zeta_\varrho(t),0) =
\mathfrak{n},
\]
concluding the proof.
\end{proof}

Now we are ready to show the main goal of this section.
 
\begin{proof}[Proof of  \eqref{th:gsi} of Theorem \ref{th:gs}.]
Assume by contradiction that there exists a vectorial ground state $( \overline u, \overline v)$. 
By Remark \ref{rem:positivita}, without loss of generality we can assume that $\overline u, \overline v>0$.
Thus, using as before the {\em polar coordinates}, we can write
	$$(\overline u,\overline v)
	= (\overline \varrho \cos\overline\vartheta, 
	\overline \varrho \sin\overline\vartheta),
	\text{ with }
	\overline \varrho^2 =\overline u^{2} +\overline v^{2}
	\text{ and } \overline\vartheta=\overline\vartheta(x)\in(0,\pi/2).$$
	Then, using (\ref{facilei}) in Lemma \ref{unionefacili}, we have that $\cos^{2q}\overline \vartheta+\sin^{2q}\overline\vartheta<1$, and so by \eqref{formulaStep0} and arguing as in \eqref{eq:bo1},
we get that, for all $t>0$, 
	\[
	\mathfrak{m}_0
	\geq I_{\lambda,0}(\gamma_{\overline u, \overline v}(t))
	>I_{\lambda,0}(\zeta_{\overline \varrho}(t),0).
	\]
	Then
	$$
	\mathfrak{m}_0
	> \max_{t>0} I_{\lambda,0}(\zeta_{\overline \varrho}(t),0)
	\geq \inf_{\varrho\in H_{\textrm r}^1(\mathbb{R}^3)\setminus\{0\}} \max_{t>0} I_{\lambda,0}(\zeta_\varrho(t),0)
	= \mathfrak{n},
	$$
	which is a contradiction with Lemma \ref{Lembeta0}.
\end{proof}

\section{The case $\beta>0$ and small}\label{sec:POSITIVESMALL}



In this section we consider
\[
\beta\in
\left\{
\begin{array}{lll}
	(0,2^{q-1}-1)& \text{ for }q\in[2,3)&\text{ as in  (\ref{th:gsiib}) of Theorem \ref{th:gs}},\\
	(0,q-1) & \text{ for }q\in(3/2,2)&\text{ as in  (\ref{th:gsiiib}) of Theorem \ref{th:gs}.}
\end{array}
\right.
\]

Let us start with the proof of item (\ref{th:gsiib}) of Theorem \ref{th:gs}.

Preliminarily, as in Section \ref{sec:bzero}, we prove
\begin{lemma}\label{Lemma51}
If $\beta\in(0,2^{q-1}-1]$ and $q\in[2,3)$, then $\mathfrak m_{\beta}= \mathfrak n$.
\end{lemma}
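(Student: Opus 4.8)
The plan is to rerun, with the obvious modifications, the argument of Lemma \ref{Lembeta0}; the only genuinely new input is the bound on $\mathfrak{h}_\beta$ supplied by Lemma \ref{unionefacili}.

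First I would record the easy inequality $\mathfrak{m}_\beta\le\mathfrak{n}$. Since the cross term $\int|u|^q|v|^q$ vanishes when the second component is zero, one has $I_{\lambda,\beta}(\mathfrak{w},0)=\mathcal{I}_{\lambda,0}(\mathfrak{w})=\mathfrak{n}$ and $J_{\lambda,\beta}(\mathfrak{w},0)=\mathcal{J}_{\lambda,0}(\mathfrak{w})=0$, the latter because $\mathfrak{w}\in\mathcal{N}^\lambda$; hence $(\mathfrak{w},0)\in\mathcal{M}$ and therefore $\mathfrak{m}_\beta=\inf_{\mathcal{M}}I_{\lambda,\beta}\le\mathfrak{n}$.

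For the reverse inequality I would invoke the min--max description $\mathfrak{m}_\beta=\inf_{(u,v)\in{\rm H}_{\rm r}\setminus\{0\}}\max_{t>0}I_{\lambda,\beta}(\gamma_{u,v}(t))$ (the Corollary following the proof of Theorem \ref{th:gs}). Fix $(u,v)\in{\rm H}_{\rm r}\setminus\{0\}$ and pass to polar coordinates $(u,v)=(\varrho\cos\vartheta,\varrho\sin\vartheta)$, $\varrho^2=u^2+v^2$, $\vartheta=\vartheta(x)$. Then $\phi_{u,v}=\phi_\varrho$ (it depends on $u,v$ only through $u^2+v^2$), $\|u\|_2^2+\|v\|_2^2=\|\varrho\|_2^2$, $\|\nabla u\|_2^2+\|\nabla v\|_2^2=\|\varrho\nabla\vartheta\|_2^2+\|\nabla\varrho\|_2^2\ge\|\nabla\varrho\|_2^2$, and, writing $y=\cos^2\vartheta(x)\in[0,1]$,
\[
\|u\|_{2q}^{2q}+\|v\|_{2q}^{2q}+2\beta\int|u|^q|v|^q=\int\varrho^{2q}\,\mathfrak{h}_\beta(\cos^2\vartheta)\le\|\varrho\|_{2q}^{2q},
\]
the last step being exactly cases (\ref{aiiib}) and (\ref{biiib}) of item (\ref{facileiiib}) of Lemma \ref{unionefacili}, available since $q\in[2,3)$ and $\beta\in(0,2^{q-1}-1]$. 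Consequently, for every $t>0$,
\[
I_{\lambda,\beta}(\gamma_{u,v}(t))\ge\frac{t^3}{2}\|\nabla\varrho\|_2^2+\frac{t}{2}\|\varrho\|_2^2+\frac{\lambda}{4}t^3\int\phi_\varrho\varrho^2-\frac{t^{4q-3}}{2q}\|\varrho\|_{2q}^{2q}=\mathcal{I}_{\lambda,0}(\zeta_\varrho(t)).
\]
Maximizing over $t>0$, using \eqref{gslRuiz}, and then taking the infimum over $(u,v)$, I would obtain $\mathfrak{m}_\beta\ge\mathfrak{n}$; together with the previous step this yields $\mathfrak{m}_\beta=\mathfrak{n}$.

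There is essentially no obstacle here, as all the work is delegated to Lemma \ref{unionefacili}\,(\ref{facileiiib}), whose one-variable calculus proof is deferred to the Appendix. The only point that asks for a bit of care is the bookkeeping of the polar change of variables — that $\phi_{u,v}$ depends on $u,v$ only through $\varrho$, and that the nonnegative angular term $\|\varrho\nabla\vartheta\|_2^2$ may simply be discarded — which reproduces the computation already made in the proof of Lemma \ref{Lembeta0}.
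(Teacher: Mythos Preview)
Your proposal is correct and follows essentially the same route as the paper: the easy inequality $\mathfrak m_\beta\le\mathfrak n$ comes from testing with $(\mathfrak w,0)$, and the reverse inequality is obtained by passing to polar coordinates, invoking parts (\ref{aiiib}) and (\ref{biiib}) of item (\ref{facileiiib}) in Lemma \ref{unionefacili} to bound the nonlinear term by $\|\varrho\|_{2q}^{2q}$, and then comparing $I_{\lambda,\beta}(\gamma_{u,v}(t))$ with $\mathcal I_{\lambda,0}(\zeta_\varrho(t))$ exactly as in Lemma \ref{Lembeta0}. The paper's presentation is slightly terser but the argument is the same.
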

\begin{proof}
Since for any $u\in H_{\textrm{r}}^{1}(\mathbb R^{3})$ it holds
\begin{equation}\label{eq:COVID}
	\mathcal I_{\lambda,0} (u) = I_{\lambda,\beta}( u,0)= I_{\lambda,\beta}(0,u),
\end{equation}
then 
\[
\mathfrak n
=\mathcal{I}_{\lambda,0}(\mathfrak{w})
=\inf_{u\in H_{\textrm r}^1(\mathbb{R}^3)\setminus\{0\}} \max_{t>0} I_{\lambda,\beta} (\zeta_u(t),0)
\geq \inf_{(u,v)\in \textrm H_{\textrm r} \setminus\{0\}} \max_{t>0} I_{\lambda,\beta} ( \gamma_{u,v}(t))
= \mathfrak m_{\beta}.
\]
Moreover, introducing the  polar coordinates as in Lemma \ref{Lembeta0}
and using (\ref{aiiib}) and  (\ref{biiib}) of (\ref{facileiiib}) in Lemma \ref{unionefacili} we get
$$
\|u\|_{2q}^{2q} + \|v\|_{2q}^{2q} +2\beta\int |u|^{q} |v|^{q}
=
\int \varrho^{2q}\left(\cos^{2q} \vartheta +\sin^{2q}\vartheta
+2\beta |\cos\vartheta|^{q}(1-\sin^{2}\vartheta)^{q/2} \right)
\leq
\| \varrho\|_{2q}^{2q}.
$$
Thus, arguing as in Lemma \ref{Lembeta0},  we arrive at
$I_{\lambda, \beta}(\gamma_{u,v}(t)) \geq \mathcal I_{\lambda,0}(\zeta_{\rho}(t))$
and so $\mathfrak m_{\beta}\geq \mathfrak n$.
\end{proof}
As an immediate consequence we can conclude as follows.
\begin{proof}[Proof of  (\ref{th:gsiib}) in Theorem \ref{th:gs}]
Hence for $\beta\in (0,2^{q-1}-1)$, the proof is completely analogous to that one of item \eqref{th:gsi} of Theorem \ref{th:gs}, using  (\ref{aiiib}) of (\ref{facileiiib}) in Lemma \ref{unionefacili} instead if (\ref{facilei}).
\end{proof}

Let us address now \eqref{th:gsiiib} of Theorem \ref{th:gs}.
We first show that
the ground state $(\mathfrak u_{\beta}, \mathfrak v_{\beta})$ is vectorial  and then  that it converges to a semitrivial solution as $\beta\to 0^{+}$.

First we show a preliminary property that we will use also in the next section, since the conclusion holds whenever 
there exists a point where  $\mathfrak h_{\beta}$ introduced in Lemma \ref{unionefacili} is greater than $1$.

\begin{lemma}\label{Lemma52}
	If $\beta\in (0, q-1)$ and $q\in(3/2,2)$, then
	\begin{equation}
		\label{upperbound}
		\mathfrak m_{\beta}<\mathfrak n.
	\end{equation}
\end{lemma}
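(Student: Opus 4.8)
The plan is to produce an explicit competitor for the minimization defining $\mathfrak m_\beta$, built from the scalar Ruiz ground state $\mathfrak w$ of \eqref{eq:ruiz} and from the maximum point of $\mathfrak h_\beta$. Since $q\in(3/2,2)$ and $\beta>0$, item (\ref{facileiib}) of Lemma \ref{unionefacili} provides a $y_\beta\in(0,1/2]$ with $\mathfrak h_\beta(y_\beta)=\max_{y\in[0,1]}\mathfrak h_\beta(y)>1$. I then consider the couple $(u_\beta,v_\beta):=(\sqrt{y_\beta}\,\mathfrak w,\sqrt{1-y_\beta}\,\mathfrak w)\in{\rm H}_{\rm r}\setminus\{0\}$, which is genuinely nonzero because $0<y_\beta\le 1/2$ (with $\mathfrak w>0$ by the Maximum Principle). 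The structural observation that makes the whole argument work is that $u_\beta^2+v_\beta^2=\mathfrak w^2$: in polar coordinates the radial component of $(u_\beta,v_\beta)$ is exactly $\mathfrak w$, so $\phi_{u_\beta,v_\beta}=\phi_{\mathfrak w}$, and the quadratic quantities $\|\nabla u_\beta\|_2^2+\|\nabla v_\beta\|_2^2$, $\|u_\beta\|_2^2+\|v_\beta\|_2^2$ and $\int(u_\beta^2+v_\beta^2)\phi_{u_\beta,v_\beta}$ coincide with the ones for $\mathfrak w$, while the $2q$-term becomes $\|u_\beta\|_{2q}^{2q}+\|v_\beta\|_{2q}^{2q}+2\beta\int|u_\beta|^q|v_\beta|^q=\mathfrak h_\beta(y_\beta)\|\mathfrak w\|_{2q}^{2q}$.

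Next I would evaluate $I_{\lambda,\beta}$ along the path $\gamma_{u_\beta,v_\beta}$ of \eqref{gamma}. Comparing with the one-variable functional evaluated along $\zeta_{\mathfrak w}$ from \eqref{zeta}, the computation above yields, for every $t>0$,
\[
I_{\lambda,\beta}(\gamma_{u_\beta,v_\beta}(t))=\mathcal I_{\lambda,0}(\zeta_{\mathfrak w}(t))-\frac{t^{4q-3}}{2q}\big(\mathfrak h_\beta(y_\beta)-1\big)\|\mathfrak w\|_{2q}^{2q}<\mathcal I_{\lambda,0}(\zeta_{\mathfrak w}(t)),
\]
the strict inequality coming from $\mathfrak h_\beta(y_\beta)>1$, $t>0$ and $\|\mathfrak w\|_{2q}^{2q}>0$. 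Both maps $t\mapsto I_{\lambda,\beta}(\gamma_{u_\beta,v_\beta}(t))$ and $t\mapsto\mathcal I_{\lambda,0}(\zeta_{\mathfrak w}(t))$ are of the form $\mu t+\nu t^3-\sigma t^{4q-3}$ with positive coefficients and $4q-3>3$, hence each attains its maximum at a unique positive point by (\ref{fa}) of Lemma \ref{lem:calculus}. Letting $t_\beta>0$ be the maximizer of the former, Lemma \ref{Step0} (equivalently the Corollary following the existence proof) gives $\mathfrak m_\beta\le\max_{t>0}I_{\lambda,\beta}(\gamma_{u_\beta,v_\beta}(t))=I_{\lambda,\beta}(\gamma_{u_\beta,v_\beta}(t_\beta))$, and then, evaluating the larger function at $t_\beta$ and using \eqref{gslRuiz},
\[
\mathfrak m_\beta\le I_{\lambda,\beta}(\gamma_{u_\beta,v_\beta}(t_\beta))<\mathcal I_{\lambda,0}(\zeta_{\mathfrak w}(t_\beta))\le\max_{t>0}\mathcal I_{\lambda,0}(\zeta_{\mathfrak w}(t))=\mathcal I_{\lambda,0}(\mathfrak w)=\mathfrak n,
\]
which is exactly \eqref{upperbound}.

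There is no serious obstacle: the argument is short once the competitor is identified. The only point requiring a touch of care is the passage from the pointwise strict inequality to a strict inequality between the two mountain-pass levels, which is legitimate precisely because both maxima are attained at positive values of $t$ (so one plugs the maximizer $t_\beta$ of the smaller functional into the larger one). I would also emphasize that the proof uses only $\max_{[0,1]}\mathfrak h_\beta>1$, not the specific range $\beta\in(0,q-1)$, so the same estimate will be reused verbatim in the large-$\beta$ regime of the next section; and that freezing the gradient, mass and Coulomb terms while inflating only the cooperative nonlinearity by the factor $\mathfrak h_\beta(y_\beta)>1$ is the heart of the matter — it is what the choice $u_\beta^2+v_\beta^2=\mathfrak w^2$ buys us.
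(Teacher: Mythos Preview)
Your proof is correct and follows essentially the same approach as the paper: both take the competitor $(\sqrt{y_\beta}\,\mathfrak w,\sqrt{1-y_\beta}\,\mathfrak w)$ (the paper writes it as $(\mathfrak w\cos\theta_\beta,\mathfrak w\sin\theta_\beta)$ with $y_\beta=\cos^2\theta_\beta$), exploit $u_\beta^2+v_\beta^2=\mathfrak w^2$ to freeze the quadratic and Coulomb terms, and use $\mathfrak h_\beta(y_\beta)>1$ to get the strict inequality along the fibering path. Your remark that only $\max_{[0,1]}\mathfrak h_\beta>1$ is used is also made in the paper immediately after the proof.
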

\begin{proof}
In virtue of  (\ref{facileiib}) of Lemma \ref{unionefacili}, let $\theta_{\beta}\in (0,\pi/2)$
be defined by $y_{\beta} = \cos^{2} \theta_{\beta}\in(0,1/2)$
and consider the vectorial function 
$$(\widetilde u,\widetilde v):=(\mathfrak w \cos\theta_{\beta} , \mathfrak w\sin \theta_{\beta})\in \textrm H_{ \textrm r}\setminus\{0\}.$$
A simple computation shows that
\begin{align*}
	\|\nabla \widetilde u\|_{2}^{2}  +\|\nabla \widetilde v\|_{2}^{2}
	&=
	\|\nabla \mathfrak w\|_{2}^{2} \\
	\|\widetilde u\|_{2}^{2}+ \|\widetilde v\|_{2}^{2}
	&=
	\|\mathfrak w\|_{2}^{2},\\
	\int (\widetilde u^{2}+\widetilde v^{2})\phi_{\widetilde u,\widetilde v}
	&=
	\int \phi_{\mathfrak w} \mathfrak w^{2},\\
	\|\widetilde u\|_{2q}^{2q} + \|\widetilde v\|_{2q}^{2q} +2\beta\int |\widetilde u|^{q} |\widetilde v|^{q}
	&=
	\left( \cos^{2q}\theta_{\beta}+\sin ^{2q} \theta_{\beta} +2\beta \cos^{q}\theta_{\beta} 
	\sin^{q}\theta_{\beta}\right)
	\|\mathfrak w\|_{2q}^{2q}\\
	&=
	\left( y_{\beta}^{q} +(1-y_{\beta})^{q} +2\beta y_{\beta}^{q/2} (1-y_{\beta})^{q/2} \right)
	\|\mathfrak w\|_{2q}^{2q}\\
	&>
	\| \mathfrak w\|_{2q}^{2q},
\end{align*}
where the last inequality is due again to (\ref{facileiib}) of Lemma \ref{unionefacili}.\\
Consequently, recalling \eqref{eq:COVID}, for any $t>0$ we have
\begin{equation*}
	\begin{split}
		\mathcal I_{\lambda,0}(\zeta_{\mathfrak w}(t))
		= I_{\lambda, \beta}(\gamma_{\mathfrak w,0}(t)) &=\frac{t^{3}}{2} \| \nabla \mathfrak w\|_{2}^{2}  +\frac{t}{2}\|\mathfrak w\|^{2}_{2}+\frac{\lambda}{4}t^{3}\int \phi_{\mathfrak w} \mathfrak w^{2} -\frac{t^{4q-3}}{2q}\|\mathfrak w\|_{2q}^{2q}\\
		&>
		\frac{t^3}{2} (\|\nabla \widetilde u \|_2^2 + \|\nabla \widetilde v \|_2^2)
		+ \frac{t}{2} (\| \widetilde u \|_2^2 + \| \widetilde v \|_2^2)
		+ \frac{\lambda}{4} t^3 \int \phi_{\widetilde u,\widetilde v} (\widetilde u^2+\widetilde v^2) \\
		&\quad - \frac{t^{4q-3}}{2q}\left( \|\widetilde u\|_{2q}^{2q}+\|\widetilde v\|_{2q}^{2q} +2\beta\int |\widetilde u|^q |\widetilde v|^q \right)\\
		&=I_{\lambda,\beta}(\gamma_{\widetilde u,\widetilde v}(t)).
	\end{split}
\end{equation*}
Passing to the maximum on $t>0$, since both maxima are achieved,
and  recalling that $t\mapsto 
\mathcal I_{\lambda,0}(\zeta_{\mathfrak w}(t))$
achieves its maximum in $t=1$ being $\mathfrak w\in \mathcal N^{\lambda}$ (see \eqref{eq:NRuiz}), we can write
\[
\mathfrak n =\mathcal I_{\lambda,0}(\mathfrak w)  
>  \max_{t>0} I_{\lambda,\beta}(\gamma_{\widetilde u,\widetilde v}(t)) \geq
\inf_{(u,v)\in \textrm H_{\textrm r}\setminus\{0\}} \max_{t>0} I_{\lambda,\beta}(\gamma_{u,v}(t)) =
\mathfrak m_{\beta}
\]
concluding the proof.
\end{proof}

We point out that, in contrast to the proof of Lemma  \ref{Mbeta},  
where we need to take exactly  $y_{\beta}$, here in Lemma \ref{Lemma52}
it is enough to take an arbitrary point where $\mathfrak h_\beta$
is greater than one.

As an immediate consequence of Lemma \ref{Lemma52} we have
\begin{proof}[Proof of \eqref{th:gsiiib} in Theorem \ref{th:gs} (vectorial ground state)]
If the ground state were for instance  of type $(\mathfrak u_{\beta}, 0)$, then, recalling \eqref{eq:COVID}, 
	we would have
	$$\mathfrak{m}_\beta
		= I_{\lambda,\beta}(\mathfrak u_{\beta}, 0 )
	= \mathcal I_{\lambda,0} (\mathfrak u_{\beta}) 
	\geq  \mathfrak{n}$$
	contradicting \eqref{upperbound}.
	\end{proof}

As observed in Remark \ref{rem:generale}, by Lemma \ref{Mbeta}, if
  $ y_{\beta}=\cos^2 \theta_\beta \in (0,1/2]$
is the maximum point of $\mathfrak h_\beta$, we have
\begin{Cor}\label{cor:53}
If $\beta\in(0,q-1)$ and $q\in (3/2,2)$, then there exists $t_\beta > 0$ such that
$\gamma_{\varrho_{\beta}\cos\theta_{\beta},\varrho_{\beta}\sin\theta_{\beta}}(t_\beta)$
is a vectorial ground state.
\end{Cor}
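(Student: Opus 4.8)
The plan is to obtain the corollary as a direct specialisation of Lemma \ref{Mbeta}, feeding in the precise location of the maximiser of $\mathfrak h_\beta$ supplied by Lemma \ref{unionefacili}.

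First I would fix $\beta\in(0,q-1)$ and $q\in(3/2,2)$ and apply item (\ref{facileiib}) of Lemma \ref{unionefacili}: it provides a unique $y_\beta\in(0,1/2]$ with $\mathfrak h_\beta(y_\beta)=\max_{[0,1]}\mathfrak h_\beta>1$. I then set $\theta_\beta:=\arccos\sqrt{y_\beta}$, so that $\cos^2\theta_\beta=y_\beta$ and, crucially, $\theta_\beta\in[\pi/4,\pi/2)$; in particular $\cos\theta_\beta>0$ and $\sin\theta_\beta>0$, because $0<y_\beta\le 1/2$.

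Next I would invoke Lemma \ref{Mbeta} for this $\theta_\beta$ (whose hypotheses hold for every $\beta\ge 0$): it produces $t_\beta>0$ with $\gamma_{\varrho_\beta\cos\theta_\beta,\varrho_\beta\sin\theta_\beta}(t_\beta)\in\mathcal M$ and $I_{\lambda,\beta}(\gamma_{\varrho_\beta\cos\theta_\beta,\varrho_\beta\sin\theta_\beta}(t_\beta))=\mathfrak m_\beta$, so this pair is a ground state. To check it is vectorial, observe that its two components are $t_\beta^2(\cos\theta_\beta)\,\varrho_\beta(t_\beta\,\cdot\,)$ and $t_\beta^2(\sin\theta_\beta)\,\varrho_\beta(t_\beta\,\cdot\,)$; since $\varrho_\beta^2=\mathfrak u_\beta^2+\mathfrak v_\beta^2>0$ and $\cos\theta_\beta,\sin\theta_\beta$ are strictly positive constants, neither component vanishes identically. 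This is exactly the situation recorded in Remark \ref{rem:generale}.

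There is no genuine obstacle: the corollary is essentially bookkeeping, combining Lemma \ref{Mbeta} with the endpoint-exclusion for the maximiser of $\mathfrak h_\beta$. The only point worth stating explicitly is that the angular constant $\theta_\beta$ must be separated from the degenerate values $0$ and $\pi/2$ — otherwise $\gamma_{\varrho_\beta\cos\theta_\beta,\varrho_\beta\sin\theta_\beta}(t_\beta)$ would be semitrivial — and this separation is precisely the information $y_\beta\in(0,1/2]$ furnished by item (\ref{facileiib}) of Lemma \ref{unionefacili} in the regime $q\in(3/2,2)$.
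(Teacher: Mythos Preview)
Your proposal is correct and follows essentially the same route as the paper: the corollary is stated there without a separate proof, being an immediate consequence of Lemma \ref{Mbeta} together with Remark \ref{rem:generale}, once item (\ref{facileiib}) of Lemma \ref{unionefacili} guarantees that the maximiser $y_\beta$ lies strictly inside $(0,1/2]$ (in fact in $(0,1/2)$ since $\beta<q-1$). Your write-up simply spells out the bookkeeping that the paper leaves implicit.
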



Now we show the {\em asymptotic behavior}   as $\beta \to 0^+$ of the vectorial  ground state solutions found in (\ref{th:gsiiib}) of Theorem \ref{th:gs}.
As in the proof of \eqref{th:gsi} of Theorem \ref{th:gs}, we assume without loss of generality that 
$\mathfrak u_{\beta}$ and $\mathfrak v_{\beta}$ are positive.

%


Arguing as in Step 1 and Step 2 of the Proof of Theorem \ref{th:gs}, we can get for $\beta>0$ in a bounded set a uniform lower bound for the ground states levels $\mathfrak m_{\beta}$.\\
However, in what follows, we give an estimate of such a lower bound depending on the energy level of the ground state $\mathfrak{g}$ of
\[
-\Delta u + u   = |u|^{2q-2} u 
\text{ in }\mathbb{R}^3
\]
(see e.g. \cite{W}).\\
To this aim let us introduce  another limit problem which 
will be useful for our purpose: system \eqref{system} with $\lambda=0$, namely
\begin{equation} \label{eq:systemMandel}
\begin{cases}
-\Delta u + u   = |u|^{2q-2} u + \beta |v|^q |u|^{q-2} u \medskip \\
-\Delta v + v  = |v|^{2q-2} v + \beta |u|^q |v|^{q-2} v
\end{cases}
\text{in }\mathbb{R}^3.
\end{equation}
Let 
$(\widehat{\mathfrak{u}}_{\beta}, \widehat{\mathfrak v}_{\beta} )\in \textrm{H}_{\textrm r}$ be the vectorial, positive and radial ground state solution,  see \cite[Corollary 1]{Mandel},
which exists for any $\beta>0$ and $q\in (3/2, 2)$.
In our notations, the  energy functional related to \eqref{eq:systemMandel} is $I_{0,\beta}$.
Since
$$I_{0, \beta}(\gamma_{\widehat{\mathfrak u}_{\beta}, \widehat{\mathfrak v}_{\beta}}(t))\to -\infty\quad \text{as } \
t\to+\infty,$$
 there exists $a_{\beta}>0$ such that 	
	$$\gamma_{\widehat{\mathfrak u}_{\beta}, \widehat{\mathfrak v}_{\beta}} \in \Gamma_{\beta}: = \left\{\eta \in C([0,a_{\beta}], \textrm{H}_{\textrm r}): I_{0,\beta}(\eta(0))=0, I_{0,\beta}(\eta(a_{\beta}))<0) \right\}
	$$
	and we have the usual minimax characterisation of the ground state
\begin{equation}\label{eq:carattMandel}
\inf_{\eta \in \Gamma_{\beta}}\max_{t>0} I_{0, \beta}(\eta(t)) = I_{0, \beta}(\widehat{\mathfrak u}_\beta ,\widehat{\mathfrak v}_\beta ),
\end{equation}
see e.g. \cite[Lemma 3.2]{MMP}.\\
The next lemma allows us to get the desired  lower bound.

\begin{Lem}\label{lowerbound}
	If $0< \beta \leq 1$, then
	$$ 
	\mathfrak m_{\beta}
	\geq  2^{\frac{q-2}{q-1}}I_{0,\beta}(\mathfrak g, 0)
	= 2^{\frac{q-2}{q-1}}\mathcal I _{0,0} (\mathfrak g).$$
\end{Lem}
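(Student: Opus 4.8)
The plan is to bound $\mathfrak m_\beta$ from below in two stages: first by the mountain--pass level $I_{0,\beta}(\widehat{\mathfrak u}_\beta,\widehat{\mathfrak v}_\beta)$ of the local system \eqref{eq:systemMandel}, and then by estimating this quantity through the sharp Gagliardo--Nirenberg inequality attached to $\mathfrak g$. For the first stage, note that since $\lambda\int(u^{2}+v^{2})\phi_{u,v}\geq0$, for every $(u,v)\in{\rm H}_{\rm r}\setminus\{0\}$ we have $I_{\lambda,\beta}(\gamma_{u,v}(t))\geq I_{0,\beta}(\gamma_{u,v}(t))$ for all $t\geq0$, hence $\max_{t>0}I_{\lambda,\beta}(\gamma_{u,v}(t))\geq\max_{t>0}I_{0,\beta}(\gamma_{u,v}(t))$. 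Moreover $t\mapsto\gamma_{u,v}(t)$ extends continuously to $[0,+\infty)$ with $\gamma_{u,v}(0)=(0,0)$ and, since $4q-3>3$, $I_{0,\beta}(\gamma_{u,v}(t))\to-\infty$ as $t\to+\infty$; after reparametrising a suitably large interval $[0,t_{0}]$ onto $[0,a_\beta]$ one gets a path in $\Gamma_\beta$, and hence by \eqref{eq:carattMandel}
\[
\max_{t>0}I_{0,\beta}(\gamma_{u,v}(t))\ \geq\ \inf_{\eta\in\Gamma_\beta}\max_{t>0}I_{0,\beta}(\eta(t))\ =\ I_{0,\beta}(\widehat{\mathfrak u}_\beta,\widehat{\mathfrak v}_\beta).
\]
Taking the infimum over $(u,v)$ and recalling $\mathfrak m_\beta=\inf_{(u,v)\in{\rm H}_{\rm r}\setminus\{0\}}\max_{t>0}I_{\lambda,\beta}(\gamma_{u,v}(t))$ gives $\mathfrak m_\beta\geq I_{0,\beta}(\widehat{\mathfrak u}_\beta,\widehat{\mathfrak v}_\beta)$.

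Next I would set $A:=\|\widehat{\mathfrak u}_\beta\|_{2q}^{2q}$, $B:=\|\widehat{\mathfrak v}_\beta\|_{2q}^{2q}$, $D:=\int|\widehat{\mathfrak u}_\beta|^{q}|\widehat{\mathfrak v}_\beta|^{q}$ and $d:=A+B+2\beta D$. Summing the two Nehari identities for \eqref{eq:systemMandel} gives $\|\nabla\widehat{\mathfrak u}_\beta\|_{2}^{2}+\|\nabla\widehat{\mathfrak v}_\beta\|_{2}^{2}+\|\widehat{\mathfrak u}_\beta\|_{2}^{2}+\|\widehat{\mathfrak v}_\beta\|_{2}^{2}=d$, hence $I_{0,\beta}(\widehat{\mathfrak u}_\beta,\widehat{\mathfrak v}_\beta)=\tfrac12 d-\tfrac{1}{2q}d=\tfrac{q-1}{2q}d$; likewise $\mathcal I_{0,0}(\mathfrak g)=\tfrac{q-1}{2q}\|\mathfrak g\|_{2q}^{2q}$. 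Therefore the lemma reduces to showing $d\geq 2^{\frac{q-2}{q-1}}\|\mathfrak g\|_{2q}^{2q}$.

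To prove this last inequality, put $g:=\|\mathfrak g\|_{2q}^{2q}$, $a:=A^{1/q}$, $b:=B^{1/q}$ (both positive since $\widehat{\mathfrak u}_\beta,\widehat{\mathfrak v}_\beta\not\equiv0$). The sharp Gagliardo--Nirenberg inequality $\|w\|_{2q}^{2q}\leq g^{1-q}\big(\|\nabla w\|_{2}^{2}+\|w\|_{2}^{2}\big)^{q}$ (see \cite{W}) applied to $\widehat{\mathfrak u}_\beta$, together with its Nehari identity $\|\nabla\widehat{\mathfrak u}_\beta\|_{2}^{2}+\|\widehat{\mathfrak u}_\beta\|_{2}^{2}=A+\beta D$, gives $a\,g^{(q-1)/q}\leq a^{q}+\beta D$, and symmetrically $b\,g^{(q-1)/q}\leq b^{q}+\beta D$; adding them, $d\geq(a+b)\,g^{(q-1)/q}$. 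On the other hand, by Cauchy--Schwarz $D\leq\sqrt{AB}=(ab)^{q/2}$, and since $\beta\leq1$ the same two inequalities yield $a\,g^{(q-1)/q}-a^{q}\leq(ab)^{q/2}$ and $b\,g^{(q-1)/q}-b^{q}\leq(ab)^{q/2}$; summing these and using the concavity of $s\mapsto s^{q/2}$ (valid because $q<2$),
\[
(a+b)\,g^{(q-1)/q}\ \leq\ a^{q}+b^{q}+2(ab)^{q/2}\ =\ \big(a^{q/2}+b^{q/2}\big)^{2}\ \leq\ 2^{\,2-q}(a+b)^{q},
\]
whence $(a+b)^{q-1}\geq 2^{\,q-2}g^{(q-1)/q}$, i.e. $a+b\geq 2^{\frac{q-2}{q-1}}g^{1/q}$, and so $d\geq(a+b)\,g^{(q-1)/q}\geq 2^{\frac{q-2}{q-1}}g$; combined with the two previous steps this is the assertion. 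The main obstacle is precisely this last step: the constant $2^{(q-2)/(q-1)}$ has to be squeezed out of the two Nehari identities alone (not the full equations) and of the optimal constant in the Gagliardo--Nirenberg inequality, and it is here that the hypotheses $\beta\leq1$ and $q<2$ are used in an essential way — the former to absorb the mixed term $\beta D$ into $\sqrt{AB}$, the latter to make $s\mapsto s^{q/2}$ concave. A comparatively minor technical point is, in the first step, turning the one--parameter curve $\gamma_{u,v}$ into a genuine element of $\Gamma_\beta$.
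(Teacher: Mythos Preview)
Your argument is correct. The first stage---comparing $\mathfrak m_\beta$ with the mountain--pass level $I_{0,\beta}(\widehat{\mathfrak u}_\beta,\widehat{\mathfrak v}_\beta)$ via $I_{\lambda,\beta}\geq I_{0,\beta}$ and the minimax characterisation \eqref{eq:carattMandel}---is exactly what the paper does (the paper applies it directly to the pair $(\mathfrak u_\beta,\mathfrak v_\beta)$ rather than to a generic $(u,v)$, but this is immaterial).

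The second stage is where the two proofs diverge. The paper invokes \cite[Proof of Lemma 4]{Mandel} as a black box to obtain
\[
I_{0,\beta}(\widehat{\mathfrak u}_\beta,\widehat{\mathfrak v}_\beta)\ \geq\ \Big(\inf_{k>0}\xi_\beta(k)\Big)^{1/(q-1)}I_{0,\beta}(\mathfrak g,0),\qquad \xi_\beta(k)=\frac{(1+k^2)^q}{1+k^{2q}+2\beta k^q},
\]
and then bounds the infimum using the monotonicity $\xi_\beta\geq\xi_1$ (here $\beta\leq1$) together with $\xi_1(k)\geq\xi_1(1)=2^{q-2}$, which is the power--mean inequality $M_2(1,k)\geq M_q(1,k)$ and hence also needs $q\leq 2$. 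You instead bypass Mandel's lemma entirely: from the two single Nehari identities of \eqref{eq:systemMandel} and the sharp inequality $\|w\|_{2q}^{2q}\leq g^{1-q}\|w\|_{H^1}^{2q}$ (equivalent to the ground--state characterisation of $\mathfrak g$) you extract $d\geq(a+b)g^{(q-1)/q}$ and then a lower bound on $a+b$ via Cauchy--Schwarz on $D$ and the concavity of $s\mapsto s^{q/2}$. Your route is more self--contained (no external citation) and makes the roles of the hypotheses $\beta\leq1$ and $q<2$ completely transparent; the paper's route is shorter once Mandel's estimate is granted and packages the same information through the one--parameter function $\xi_\beta$. Both are valid and yield the identical constant $2^{(q-2)/(q-1)}$.
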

\begin{proof}	
	 By Lemma \ref{Step0} and \eqref{eq:carattMandel} it holds
	 \[
	 	\mathfrak m_{\beta}
	 	= \max_{t>0} I_{\lambda, \beta}(\gamma_{\mathfrak u_{\beta}, \mathfrak v_{\beta}}(t))
	 	\geq \max_{t>0} I_{0, \beta}(\gamma_{\mathfrak u_{\beta}, \mathfrak v_{\beta}}(t))
	 	\geq  \inf_{\eta \in \Gamma_{\beta}}\max_{t>0} I_{0, \beta}(\eta(t)) 
	 	= I_{0, \beta}(\widehat{\mathfrak u}_\beta ,\widehat{\mathfrak v}_\beta ).
	 \]
	On the other hand, by  \cite[Proof of Lemma 4]{Mandel} we know
	\begin{equation*}\label{mandel}
	I_{0, \beta}(\widehat{\mathfrak u}_\beta ,\widehat{\mathfrak v}_\beta )
	\geq 
	\left(\inf_{k>0}\displaystyle{\frac{(1+k^{2})^q}{1+k^{2q}+2\beta k^q}}\right)^{\frac{1}{q-1}} I_{0, \beta}(\mathfrak g,0).
	\end{equation*}
	If we set $$\xi_\beta(k):= \displaystyle{\frac{(1+k^{2})^q}{1+k^{2q}+2\beta k^q}},$$ we have that, if $\beta_1< \beta_2$, then, for every $k>0$,
	$\xi_{\beta_1}(k) > \xi_{\beta_2}(k)$. 
	Hence, if $\beta\in(0,1]$, 
	it holds
	\[
	\xi_\beta(k) \geq 
	 \xi_1(k) \geq  \xi_{1}(1)=	2^{q-2}
	\]
	and the conclusion follows.
	\end{proof}
	
To prove the asymptotic behavior of the ground states $(\mathfrak u_{\beta}, \mathfrak v_{\beta})$, 
we will show 
first the asymptotic behaviour of the ground state levels.
However in order to do that, we will work
with another family of ground states different from $(\mathfrak u_{\beta}, \mathfrak v_{\beta})$.

Now we are ready to prove the {\em asymptotic behavior} as $\beta \to 0^+$ of the ground state levels.

\begin{Prop}\label{th:tendearuiz}
	If $q\in (3/2, 2)$, as $\beta\to 0^+$, the  family of radial ground state solutions of \eqref{system} found in Lemma \ref{Mbeta}  converges in ${\rm H}_{\emph{r}}$  to a semitrivial solution of \eqref{system}, whose nontrivial component is a radial ground state solution of \eqref{eq:ruiz}.
	Moreover 
	\begin{equation}\label{eq:mn}
\lim_{\beta\to 0^+}	\mathfrak m_\beta = \mathfrak n.
	\end{equation}
\end{Prop}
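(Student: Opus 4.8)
The plan is to prove the two assertions of Proposition~\ref{th:tendearuiz} in the natural order: first the convergence of ground state levels \eqref{eq:mn}, and then the convergence of the particular family of ground states from Lemma~\ref{Mbeta} to a semitrivial solution whose nontrivial component solves \eqref{eq:ruiz}.

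\textbf{Step 1: the upper bound $\limsup_{\beta\to 0^+}\mathfrak m_\beta\le\mathfrak n$.} This is essentially contained in Lemma~\ref{Lemma52}, but we want a quantitative version. Fix $\mathfrak w$, the radial ground state of \eqref{eq:ruiz}, and take the test pair $(\mathfrak w\cos\theta_\beta,\mathfrak w\sin\theta_\beta)$ with $y_\beta=\cos^2\theta_\beta$ the maximum point of $\mathfrak h_\beta$. The computation in the proof of Lemma~\ref{Lemma52} shows that all quadratic and nonlocal quantities are unchanged, while the $2q$-term gets multiplied by $\mathfrak h_\beta(y_\beta)$. Since, by \eqref{facileiib} of Lemma~\ref{unionefacili}, $y_\beta\to 0$ as $\beta\to 0^+$, we have $\mathfrak h_\beta(y_\beta)\to \mathfrak h_0(0)=1$. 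Projecting via $\gamma$ onto $\mathcal M$ and using the monotonicity of the projected functional one gets $\mathfrak m_\beta\le \max_{t>0}\Phi_\beta(t)$ where $\Phi_\beta(t)=\frac{t^3}{2}\|\nabla\mathfrak w\|_2^2+\frac{t}{2}\|\mathfrak w\|_2^2+\frac{\lambda}{4}t^3\int\phi_{\mathfrak w}\mathfrak w^2-\frac{t^{4q-3}}{2q}\mathfrak h_\beta(y_\beta)\|\mathfrak w\|_{2q}^{2q}$; since $\mathfrak h_\beta(y_\beta)\to 1$, continuous dependence of the max on the coefficient gives $\max_{t>0}\Phi_\beta(t)\to \max_{t>0}\mathcal I_{\lambda,0}(\zeta_{\mathfrak w}(t))=\mathfrak n$.

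\textbf{Step 2: the lower bound $\liminf_{\beta\to 0^+}\mathfrak m_\beta\ge\mathfrak n$.} Let $(\mathfrak u_\beta^\sharp,\mathfrak v_\beta^\sharp)$ be the family of ground states from Lemma~\ref{Mbeta}, of the form $\gamma_{\varrho_\beta\cos\theta_\beta,\varrho_\beta\sin\theta_\beta}(t_\beta)$ with constant angular coordinate $\theta_\beta$, $y_\beta=\cos^2\theta_\beta$. By Remark~\ref{rem:Convergenza} the norms $\|\mathfrak u_\beta^\sharp\|^2+\|\mathfrak v_\beta^\sharp\|^2$ are bounded uniformly for $\beta\in(0,1]$ (using the already established upper bound on $\mathfrak m_\beta$), and by Lemma~\ref{lowerbound} they are bounded below away from zero. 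Hence, up to a subsequence $\beta_n\to 0^+$, one has weak convergence in $\mathrm H_{\mathrm r}$ to some $(u_0,v_0)$; by Lemma~\ref{lem:Phi} the nonlocal and $|u|^q|v|^q$ terms pass to the limit, and by compact Sobolev embedding so do the $L^{2q}$ norms. Since $y_{\beta_n}\to 0$ we can also show $\mathfrak v_{\beta_n}^\sharp\to 0$ strongly (its $L^2$ and $L^{2q}$ norms are controlled by $\sin^2\theta_{\beta_n}\to 0$ times bounded quantities, and then the equation forces the gradient to vanish too). Passing to the limit in $J_{\lambda,\beta_n}(\mathfrak u_{\beta_n}^\sharp,\mathfrak v_{\beta_n}^\sharp)=0$ and using weak lower semicontinuity exactly as in Step~4 of the proof of Theorem~\ref{th:gs} shows that either $(u_0,0)$ lies on $\mathcal N^\lambda$ — whence $\mathfrak n\le\mathcal I_{\lambda,0}(u_0)\le\liminf I_{\lambda,\beta_n}(\mathfrak u_{\beta_n}^\sharp,\mathfrak v_{\beta_n}^\sharp)=\liminf\mathfrak m_{\beta_n}$ — or a strict inequality gives a contradiction via the projection argument of Lemma~\ref{Step0}. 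Combining with Step~1 yields \eqref{eq:mn}, and since the limit is independent of the subsequence the whole family converges.

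\textbf{Step 3: strong convergence of the family and identification of the limit.} Once $\mathfrak m_\beta\to\mathfrak n$, we upgrade the weak convergence of $(\mathfrak u_\beta^\sharp,\mathfrak v_\beta^\sharp)$ to strong convergence in $\mathrm H_{\mathrm r}$: the argument is the same as Step~4 of the proof of Theorem~\ref{th:gs} — the barred quantities $\overline a,\overline b$ must equal their lower-semicontinuous limits $a,b$, else the projected one-variable functional would produce an element of $\mathcal M$ strictly below $\mathfrak m_{\beta}$ in the limit, contradicting $\mathfrak m_\beta\to\mathfrak n=\mathcal I_{\lambda,0}(u_0)$ with $(u_0,0)$ already on $\mathcal M$. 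Then $\mathfrak v_\beta^\sharp\to 0$ and $\mathfrak u_\beta^\sharp\to u_0$ strongly in $H^1_{\mathrm r}$, $(u_0,0)$ is a critical point of $I_{\lambda,0}$ at level $\mathfrak n$, so $u_0$ is a radial ground state of \eqref{eq:ruiz}. \textbf{The main obstacle} is Step~2, specifically getting that the weak limit does not lose mass in the gradient and $L^2$ parts simultaneously for both components while $\theta_\beta$ degenerates: one must carefully combine the uniform lower bound from Lemma~\ref{lowerbound}, the structure $y_\beta\to 0$ from Lemma~\ref{unionefacili}\eqref{facileiib}, and the Nehari--Pohozaev constraint, since a priori the mass could escape into the vanishing component or into the gradient; handling this is exactly why the proof uses the special family of Lemma~\ref{Mbeta} (with constant angle) rather than an arbitrary family of ground states.
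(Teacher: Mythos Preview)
Your overall strategy is viable, but there is a concrete slip and the route is noticeably more convoluted than the paper's.

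\textbf{The slip.} In Step~2 you write that ``$\sin^2\theta_{\beta_n}\to 0$'' and hence $\mathfrak v_{\beta_n}^\sharp\to 0$. This is backwards: by definition $y_\beta=\cos^2\theta_\beta$, so $y_\beta\to 0$ forces $\cos\theta_\beta\to 0$ (equivalently $\theta_\beta\to\pi/2$), and therefore it is the \emph{first} component $\mathfrak u_\beta^\sharp=\widetilde\varrho_\beta\cos\theta_\beta$ that vanishes strongly in $H^1_{\mathrm r}$, while the second one survives. The paper makes exactly this observation. Once you swap the roles of $u$ and $v$ throughout, your argument goes through.

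\textbf{Comparison with the paper's proof.} The paper does \emph{not} separate an upper and lower bound for $\mathfrak m_\beta$; the strict inequality $\mathfrak m_\beta<\mathfrak n$ from Lemma~\ref{Lemma52} already gives your Step~1 for free. More importantly, the paper works with the \emph{Euler--Lagrange system} $I'_{\lambda,\beta}=0$ rather than with the constraint $J_{\lambda,\beta}=0$: it shows the nonvanishing of the weak limit ${\rm v}$ via the identity $I_{\lambda,\beta}-\tfrac12 I'_{\lambda,\beta}[\cdot,\cdot]$ together with Lemma~\ref{lowerbound}; it passes to the limit directly in the second equation of the system to see that ${\rm v}$ solves \eqref{eq:ruiz}; and it obtains strong $H^1$-convergence by testing that equation with ${\rm v}_\beta-{\rm v}$. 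The level convergence $\mathfrak m_\beta\to\mathfrak n$ then drops out at the very end from $\mathfrak m_\beta<\mathfrak n$ and $\mathfrak m_\beta\to\mathcal I_{\lambda,0}({\rm v})\ge\mathfrak n$.

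Your route through $J=0$ and the ``projection/contradiction'' of Step~4 of Theorem~\ref{th:gs} can be made to work, but note that in Theorem~\ref{th:gs} the contradiction is with a \emph{fixed} minimal level $\mathfrak m_\beta$, while here $\beta$ varies; you need the already-established upper bound $\limsup\mathfrak m_\beta\le\mathfrak n$ to close the contradiction, and you need to argue separately (via $\bar d>0$ and Lemma~\ref{lowerbound}) that the weak limit is nontrivial. All of this is avoided in the paper's approach by using the equation directly.
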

\begin{proof}
Using the notations of Lemma \ref{Mbeta}, let us set
$${\rm u}_{\beta} :=\widetilde \varrho_\beta\cos\theta_{\beta}, 
\quad {\rm v}_{\beta} := \widetilde \varrho_\beta\sin\theta_{\beta},
\quad \widetilde \varrho_\beta := \zeta_{\rho_\beta}(t_\beta)=t_\beta^2 \varrho_\beta(t_\beta \cdot).$$
 By Remark \ref{rem:Convergenza} and \eqref{upperbound}
 we deduce that
	\begin{equation*}\label{complexform}
	({\rm u}_\beta, {\rm v}_\beta)
	\rightharpoonup ( {\rm u},  {\rm v}) \quad \text{in } {\rm H}_{\textrm r} \text{ as }\beta \to 0^+.
	\end{equation*}
	Since $ \| ({\rm u}_\beta, {\rm v}_\beta) \| = \| \widetilde{\varrho}_\beta \|$ and,  by Lemma \ref{unionefacili}, $\lim_{\beta\to 0^{+}} \theta_{\beta} =\pi/2$,
	we see that  ${\rm u}_\beta \to 0$ in $H^1(\mathbb{R}^3)$. Thus ${\rm u}=0$.\\	
        We claim that  $ {\rm v} \not = 0$. \\ 
	By the Sobolev embeddings and the Strauss Lemma, ${\rm u}_\beta \to  0$ 
	and $\widetilde \varrho_\beta
	\to  {\rm v}$ in $L^p(\mathbb{R}^3)$ for all $p\in(2,6)$. Hence, using Lemma  \ref{lowerbound}, \eqref{eq:gslevel}, and (\ref{facileiib}) in Lemma \ref{unionefacili},
		\begin{align*}
	0 < 2^{\frac{q-2}{q-1}}\mathcal I_{0,0} (\mathfrak g)
	& \leq
	I_{\lambda, \beta}({\rm u}_\beta,{\rm v}_\beta)
	=
	I_{\lambda, \beta}({\rm u}_\beta,{\rm v}_\beta) 
	- 
	\frac{1}{2}  I_{\lambda, \beta}^\prime({\rm u}_\beta,{\rm v}_\beta)
	[{\rm u}_\beta,{\rm v}_\beta]\\
	&=-\frac{\lambda}{4}
	\int\phi_{\widetilde \varrho_{\beta}} \widetilde \varrho_{\beta}^{2} 
	+ \frac{q-1}{2q } \mathfrak h_\beta (y_\beta)  \|\widetilde \varrho_\beta\|^{2q}_{2q}\\
	&\leq \frac{q-1}{2q } \mathfrak h_\beta (y_\beta)  \|\widetilde \varrho_\beta\|^{2q}_{2q} 
	\longrightarrow \frac{q-1}{2q }\| {\rm v} \|^{2q}_{2q}
	\end{align*}
	getting the claim.\\
	Now we prove that $\rm v$ is a solution of \eqref{eq:ruiz}.\\	
	We know that $(\rm u_{\beta}, \rm v_{\beta})$ satisfies, for any $\varphi\in  H_{\textrm r}^{1}(\mathbb R^{3})$,
	\[
	\int \nabla \rm v_\beta \nabla \varphi + \int \rm v_\beta  \varphi
	+ \lambda \int \phi_{\rm u_{\beta}, \rm v_{\beta}}  	\rm v_\beta \varphi
	-\int |\rm v_\beta|^{2q-2} \rm v_\beta \varphi
	-\beta \int |\rm u_{\beta}|^{q} |\rm v_\beta|^{q-2} \rm v_\beta \varphi
	=0.
	\]
	Then passing to the limit as $\beta\to 0^+$, using also that, by Lemma \ref{lem:Phi},
	\[
	\left| \int  \rm v_{\beta} \varphi \phi_{\rm u_{\beta}, \rm v_{\beta} }
	- \int  \rm v \varphi \phi_{\rm u,  \rm v } \right|
	\leq
	\big(\|\phi_{{\rm u}_{\beta},{\rm v}_\beta }\|_{6}
	\|{\rm v}_{\beta}-{\rm v}\|_{12/5}
	+
	\|\phi_{{\rm u}_{\beta},{\rm v}_\beta } - \phi_{\rm u,  \rm v } \|_{6}
	\|{\rm v}\|_{12/5}\big)\|\varphi \|_{12/5} \to 0,
	\]
	we infer
	\begin{equation*}\label{weaku}
	\int \nabla \rm v \nabla \varphi + \int \rm v  \varphi
	+ \lambda \int \rm v\varphi \phi_{\rm u, \rm v}
	-\int|\rm v|^{2q-2} \rm v \varphi
	=0
	\end{equation*}
	which means that $\textrm v$ solves \eqref{eq:ruiz}.\\
	To show the strong convergence ${\rm v}_\beta \to {\rm v}$ in $H^{1}_{\textrm r}(\mathbb R^{3})$, observe that, for all $\psi\in H^{1}_{\textrm r}(\mathbb R^{3})$,
	\begin{equation*}\label{eq:nonloso}
	I'_{\lambda,\beta}({\rm u}_\beta,{\rm v}_\beta)[0,\psi] =0.
	\end{equation*}
	Then, choosing $\psi= {\rm v}_\beta-{\rm v}$, we get
	\begin{align*}
	\int \nabla {\rm v}_\beta \nabla ({\rm v}_\beta - {\rm v})
	+&\int {\rm v}_{\beta}({\rm v}_{\beta} - {\rm v})
	+ \lambda \int {\rm v}_{\beta}({\rm v}_{\beta} - {\rm v}) \phi_{{\rm u}_{\beta},{\rm v}_\beta }
	\\
	&=
	\int |{\rm v}_{\beta}|^{2q-2} {\rm v}_{\beta} ({\rm v}_{\beta} - {\rm v})
	+ \beta\int |{\rm u}_{\beta}|^{q}|{\rm v}_{\beta}|^{q-2} {\rm v}_{\beta} ({\rm v}_{\beta} - {\rm v}).
	\end{align*}
	Passing to the limit as $\beta\to0^+$ in the above identity,
	we get 
	$\|{\rm v}_{\beta}\|^{2} \to \| {\rm v} \|^{2}$,
and so the strong convergence holds.\\
	Finally, using  \eqref{upperbound},
	we infer
	\begin{equation*}\label{eq:gsruiz}
	\mathfrak{n}
	>\mathfrak{m}_\beta
	=I_{\lambda,\beta}({\rm u}_{\beta}, {\rm v}_{\beta})
	\to \mathcal I_{\lambda,0}({\rm v}),
	\end{equation*}
	and then ${\rm v}$ is a ground state solution of \eqref{eq:ruiz} and \eqref{eq:mn} follows.
\end{proof}

Hence we can conclude.

\begin{proof}[Proof  of   \eqref{th:gsiiib} of Theorem \ref{th:gs} (asymptotic behaviour)]
		By Remark \ref{rem:Convergenza} and \eqref{upperbound} we have that $\{ (\mathfrak u_{\beta}, \mathfrak v_{\beta})\}$ is bounded and then
weakly convergent in $\rm H_{r}$ to some  $(\rm u^{*},\rm v^{*})$.\\
		First we prove that, actually, the convergence is strong.\\
		Indeed, since for every $\psi\in H^{1}_{\textrm r}(\mathbb R^{3})$, $I'_{\lambda,\beta}(\mathfrak u_{\beta}, \mathfrak v_{\beta})[\psi,0]=0$, then, choosing $\psi=\mathfrak u_{\beta} - {\rm u^{*}} $ we get, arguing as in the proof of Lemma \ref{th:tendearuiz}, that $\mathfrak u_{\beta} \to {\rm u^{*}}$ in $H_{\textrm r}^{1}(\mathbb R^{3})$. Analogously
		we get $\mathfrak v_{\beta} \to {\rm v^{*}}$ in $H_{\textrm r}^{1}(\mathbb R^{3})$ and, arguing as in Lemma \ref{th:tendearuiz},  we see that $(\rm u^{*}, \rm v^{*})$ satisfies \eqref{system0}.\\
		On the other hand, by \eqref{eq:mn} and the strong convergence of $\{(\mathfrak u_{\beta}, \mathfrak v_{\beta})\}$ we arrive at
		\begin{equation}\label{eq:sopra}
		I_{\lambda,0}(\rm u^{*}, \rm v^{*})=\mathfrak n>0.
		\end{equation}
		Thus $(\rm u^{*},\rm v^{*})\in \rm H_{r}\setminus\{0\}$. \\
		Let us see now that $(\rm u^{*},\rm v^{*})$ is semitrivial.\\
Using Lemma \ref{Lembeta0} and \eqref{eq:sopra}, we get
		\[
		\mathfrak n = \mathfrak m_0 \leq I_{\lambda,0} (\rm u^{*}, \rm v^{*}) = \mathfrak n.
		\]
		Hence, $(\rm u^{*}, \rm v^{*})$ is a ground state for \eqref{system0}, and so, by 
		item \eqref{th:gsi} of Theorem \ref{th:gs}, is semitrivial.
	\end{proof}

We conclude this section with a further result about 
a particular solution of our system.\\
Let us recall that, as observed in Remark \ref{rem2l}, $\mathfrak (\mathfrak z_{\beta},\mathfrak z_{\beta})$, where $\mathfrak z_{\beta}$  is a ground state solution  of \eqref{2lambdabeta}, is a solution of \eqref{system}.
Thus, in view of (\ref{th:gsiib}) of Theorem \ref{th:gs}, for $q\in[2,3)$, such a solution is not a ground state. 
The same holds also for $q\in(3/2,2)$. More precisely we have
\begin{Th}\label{A1}
	If $\beta$ is small enough, the couple $(\mathfrak z_\beta, \mathfrak z_\beta)$ is not a ground state solution of \eqref{system}.
\end{Th}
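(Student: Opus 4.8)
The plan is to split according to $q$. When $q\in[2,3)$ and $\beta\in(0,2^{q-1}-1)$ there is essentially nothing to prove beyond what has already been noted: by the Maximum Principle one may take $\mathfrak z_\beta>0$, so $(\mathfrak z_\beta,\mathfrak z_\beta)$ is a \emph{vectorial} solution of \eqref{system}, whereas item (\ref{th:gsiib}) of Theorem \ref{th:gs} says every ground state of \eqref{system} is semitrivial for such $\beta$; hence $(\mathfrak z_\beta,\mathfrak z_\beta)$ cannot be a ground state. So the substantial case is $q\in(3/2,2)$, where the ground state is vectorial and this shortcut is unavailable.

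For $q\in(3/2,2)$, by Remark \ref{rem2l} (see \eqref{i2i}) one has $I_{\lambda,\beta}(\mathfrak z_\beta,\mathfrak z_\beta)=2\,\mathcal I_{2\lambda,\beta}(\mathfrak z_\beta)$, so it suffices to bound the ground state level of \eqref{2lambdabeta} from below. The idea is a rescaling. Setting $c:=(1+\beta)^{-1/(2q-2)}$ and $\mathfrak z_\beta=c\,w$, a direct computation based on the homogeneity $\phi_{cw}=c^{2}\phi_{w}$ and on the cancellation $(1+\beta)c^{2q}=c^{2}$ shows that $w$ solves \eqref{eq:ruiz} with $\lambda$ replaced by $\tilde\lambda:=2\lambda c^{2}=2\lambda(1+\beta)^{-1/(q-1)}$, that the constraint $\mathcal J_{2\lambda,\beta}(\,\cdot\,)=0$ transforms exactly into $w\in\mathcal N^{\tilde\lambda}$ (see \eqref{eq:NRuiz}), and that $\mathcal I_{2\lambda,\beta}(c\,w)=c^{2}\,\mathcal I_{\tilde\lambda,0}(w)$. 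Minimizing each functional over its (now corresponding) constraint set and writing $\mathfrak n_{\mu}$ for the radial ground state level of \eqref{eq:ruiz} with parameter $\mu>0$ (so $\mathfrak n=\mathfrak n_{\lambda}$), we obtain
\[
I_{\lambda,\beta}(\mathfrak z_\beta,\mathfrak z_\beta)=2\,(1+\beta)^{-1/(q-1)}\,\mathfrak n_{2\lambda(1+\beta)^{-1/(q-1)}}.
\]

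It then remains to use that $\mu\mapsto\mathfrak n_{\mu}$ is nondecreasing: in the minimax formula \eqref{gslRuiz} the quantity $\mathcal I_{\mu,0}(\zeta_{u}(t))$ is nondecreasing in $\mu$ for every fixed $u\ne0$ and $t>0$, hence so are $\max_{t>0}\mathcal I_{\mu,0}(\zeta_{u}(t))$ and its infimum over $u$. Setting $s:=(1+\beta)^{-1/(q-1)}\in(0,1)$, which tends to $1$ as $\beta\to0^{+}$, for $\beta$ small enough we have $s>1/2$, so $2\lambda s>\lambda$ and
\[
I_{\lambda,\beta}(\mathfrak z_\beta,\mathfrak z_\beta)=2s\,\mathfrak n_{2\lambda s}\ \ge\ 2s\,\mathfrak n_{\lambda}\ \ge\ \mathfrak n_{\lambda}=\mathfrak n .
\]
On the other hand, $\beta\in(0,q-1)$ gives $\mathfrak m_{\beta}<\mathfrak n$ by \eqref{upperbound} in Lemma \ref{Lemma52}. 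Therefore $I_{\lambda,\beta}(\mathfrak z_\beta,\mathfrak z_\beta)\ge\mathfrak n>\mathfrak m_{\beta}$, and since $(\mathfrak z_\beta,\mathfrak z_\beta)\in\mathcal M$ this shows it cannot minimize $I_{\lambda,\beta}$, i.e. it is not a ground state. The only mildly delicate point is the bookkeeping of the exponents of $c$ in the rescaling (notably the cancellation $(1+\beta)c^{2q}=c^{2}$), and there is no genuine analytic obstacle. Alternatively, in the spirit of Section \ref{sec:POSITIVESMALL} one could prove by a Ruiz-type compactness argument that $\mathfrak z_\beta$ converges strongly in $H^{1}_{\rm r}(\mathbb R^{3})$ as $\beta\to0^{+}$ to the ground state of \eqref{2lambdabeta} with $\beta=0$ and pass to the limit in the energies, but the rescaling makes such a compactness argument unnecessary.
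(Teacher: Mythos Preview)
Your proof is correct and takes a genuinely different, more direct route than the paper. The paper's argument for $q\in(3/2,2)$ is a compactness argument: it shows that along any sequence $\beta_n\to0^+$ the ground states $\mathfrak z_{\beta_n}$ form a bounded Palais--Smale sequence for $\mathcal I_{2\lambda,0}$, extracts a strong limit $w\ne0$ which is a critical point of $\mathcal I_{2\lambda,0}$, and concludes via Lemma~\ref{leincr} that $I_{\lambda,\beta_n}(\mathfrak z_{\beta_n},\mathfrak z_{\beta_n})\to 2\,\mathcal I_{2\lambda,0}(w)\ge 2\,\mathcal I_{2\lambda,0}(\mathfrak z_0)>\mathfrak n$, while $\mathfrak m_{\beta_n}\to\mathfrak n$ by Proposition~\ref{th:tendearuiz}. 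Your rescaling $\mathfrak z_\beta=(1+\beta)^{-1/(2q-2)}w$ produces instead the exact identity $I_{\lambda,\beta}(\mathfrak z_\beta,\mathfrak z_\beta)=2s\,\mathfrak n_{2\lambda s}$ with $s=(1+\beta)^{-1/(q-1)}$, and then the monotonicity $\mu\mapsto\mathfrak n_\mu$ (which you obtain cleanly from the minimax \eqref{gslRuiz}, and which the paper proves separately as Lemma~\ref{leincr}) together with the strict inequality $\mathfrak m_\beta<\mathfrak n$ of Lemma~\ref{Lemma52} finishes the argument without any limiting procedure. What you gain is that no compactness step is needed and the range of admissible $\beta$ becomes explicit (namely $\beta<2^{q-1}-1$, which is contained in $(0,q-1)$ for $q\in(3/2,2)$); what the paper's approach buys is that the same convergence machinery is already in place from Proposition~\ref{th:tendearuiz}, so the extra cost there is small. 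Your final remark that the compactness route is an alternative is exactly what the paper does.
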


Let us start with two preliminary lemmata concerning the monotonicity 
of the ground states levels for a single equation of  type \eqref{2lambdabeta} with respect to the parameters $\lambda$ and $\beta$.
Their proofs use standard arguments.

\begin{Lem}\label{leincr}
	Let $0<\lambda_1 < \lambda_2$ and $\mathfrak{w}_i$, $i=1,2$ be  ground state solutions
	of
	\[
	-\Delta u + u + \lambda_i \phi_u u = |u|^{2q-2} u,
	\text{ in } \mathbb{R}^3,\quad i=1,2.
	\]
	Then
	\[
	\mathcal{I}_{\lambda_1,0}(\mathfrak{w}_1)< \mathcal{I}_{\lambda_2,0}(\mathfrak{w}_2).
	\]
\end{Lem}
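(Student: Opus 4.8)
The plan is to exploit the variational characterization of the ground state level as a min–max over the scaling path $\zeta_u(t)=t^2u(t\cdot)$, together with the monotonicity in $\lambda$ of the nonlocal term $\int \phi_u u^2$. First I would write, for each $i$, the level $\mathcal{I}_{\lambda_i,0}(\mathfrak w_i) = \inf_{u\in H^1_{\mathrm r}\setminus\{0\}}\max_{t>0}\mathcal{I}_{\lambda_i,0}(\zeta_u(t))$, using \eqref{gslRuiz}. The key observation is that, fixing any $u\not\equiv 0$, the map $\lambda\mapsto \mathcal{I}_{\lambda,0}(\zeta_u(t))$ is strictly increasing in $\lambda$ for every $t>0$, since it differs only in the term $\frac{\lambda}{4}t^3\int\phi_u u^2$ and $\int\phi_u u^2>0$. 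Hence $\mathcal{I}_{\lambda_1,0}(\zeta_u(t))<\mathcal{I}_{\lambda_2,0}(\zeta_u(t))$ pointwise in $t$.

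The main step is to convert this pointwise inequality into a strict inequality between the min–max levels without losing strictness when passing to $\max_t$ and $\inf_u$. Here I would use the minimizer $\mathfrak w_2$ for $\lambda_2$: let $t_1>0$ be the unique point where $t\mapsto \mathcal{I}_{\lambda_1,0}(\zeta_{\mathfrak w_2}(t))$ attains its maximum (existence and uniqueness by the structure recalled before \eqref{gslRuiz}, or by Lemma \ref{lem:calculus}(a)). Then
\[
\mathcal{I}_{\lambda_1,0}(\mathfrak w_1)
\le \max_{t>0}\mathcal{I}_{\lambda_1,0}(\zeta_{\mathfrak w_2}(t))
= \mathcal{I}_{\lambda_1,0}(\zeta_{\mathfrak w_2}(t_1))
< \mathcal{I}_{\lambda_2,0}(\zeta_{\mathfrak w_2}(t_1))
\le \max_{t>0}\mathcal{I}_{\lambda_2,0}(\zeta_{\mathfrak w_2}(t))
= \mathcal{I}_{\lambda_2,0}(\mathfrak w_2),
\]
where the first inequality is the min–max characterization for $\lambda_1$, the strict middle inequality is the pointwise monotonicity applied at the single value $t=t_1$ (with $\int\phi_{\mathfrak w_2}\mathfrak w_2^2>0$), and the last equality uses that $\mathfrak w_2\in\mathcal N^{\lambda_2}$, i.e. $t=1$ is its maximizer for $\mathcal{I}_{\lambda_2,0}(\zeta_{\mathfrak w_2}(\cdot))$. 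This chains to the claimed strict inequality.

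The only subtlety — and the one place I would be slightly careful — is ensuring the middle inequality is genuinely strict: this needs $\int\phi_{\mathfrak w_2}\mathfrak w_2^2>0$ and $t_1>0$, both of which hold because $\mathfrak w_2\not\equiv 0$ (so $\phi_{\mathfrak w_2}>0$) and because the maximum of $\mathcal{I}_{\lambda_1,0}(\zeta_{\mathfrak w_2}(\cdot))$ on $(0,\infty)$ is attained at an interior point by Lemma \ref{lem:calculus}(a). Everything else is bookkeeping with the explicit one-variable expression for $\mathcal{I}_{\lambda,0}(\zeta_u(t))$. I do not expect any real obstacle here; the proof is genuinely a short monotonicity argument, which is consistent with the authors' remark that ``their proofs use standard arguments.''
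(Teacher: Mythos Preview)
Your proposal is correct and follows essentially the same approach as the paper: both use $\mathfrak w_2$ as a test function, project it onto the $\lambda_1$ path via the unique maximizer $t_1$ of $t\mapsto\mathcal I_{\lambda_1,0}(\zeta_{\mathfrak w_2}(t))$, and exploit the strict monotonicity in $\lambda$ of the nonlocal term at that single $t$. The only cosmetic difference is that the paper first shows $\mathcal J_{\lambda_1,0}(\mathfrak w_2)<0$ to locate $t_1\in(0,1)$ and thereby gets a strict inequality also in the last step, whereas you use a non-strict $\le$ there—which is harmless since strictness is already secured in the middle.
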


\begin{proof}
	Since
	\[
	0
	= \mathcal{J}_{\lambda_2,0}(\mathfrak{w}_2)
	= \mathcal{J}_{\lambda_1,0}(\mathfrak{w}_2) + \frac{3}{4} (\lambda_2 - \lambda_1) \int  \mathfrak{w}_2^2 \phi_{\mathfrak{w}_2}
	> \mathcal{J}_{\lambda_1,0}(\mathfrak{w}_2)
	= \mathcal{J}_{\lambda_1,0}(\zeta_{\mathfrak{w}_2}(1))
	\]
	and by \eqref{Jonevart} and (\ref{fa}) in Lemma \ref{lem:calculus},  we see that there exists $t_1 \in (0,1)$ such that
	\[
	\mathcal{J}_{\lambda_1,0}(\zeta_{\mathfrak{w}_2}(t_1))=0,
	\]
	namely, $\zeta_{\mathfrak{w}_2}(t_1)\in\mathcal{N}^{\lambda_1}$.\\ Therefore
	\[
	\mathcal{I}_{\lambda_1,0}(\mathfrak{w}_1)
	\leq \mathcal{I}_{\lambda_1,0}(\zeta_{\mathfrak{w}_2}(t_1))
	< \mathcal{I}_{\lambda_2,0}(\zeta_{\mathfrak{w}_2}(t_1))
	< \mathcal{I}_{\lambda_2,0}(\zeta_{\mathfrak{w}_2}(1))
	= \mathcal{I}_{\lambda_2,0}(\mathfrak{w}_2),
	\]
	concluding the proof.
\end{proof}

\begin{Lem}\label{monotonicitygs}
	For $0\leq\beta_1<\beta_2$, let 
	$\mathfrak z_{\beta_1}, \mathfrak z_{\beta_2}$ be respective ground states
	of \eqref{2lambdabeta}. Then
	\[
	0< \mathcal I_{2\lambda, \beta_2} (\mathfrak z_{\beta_2})
	< \mathcal I_{2\lambda, \beta_1}(\mathfrak z_{\beta_1}).
	\]
\end{Lem}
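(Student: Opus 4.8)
The plan is to reproduce, with $\beta$ in place of $\lambda$, exactly the scheme used for Lemma \ref{leincr}. First I would recall the variational structure of \eqref{2lambdabeta}, which is the one imported verbatim from \cite{RuizJFA}: for every $\beta\geq0$ the ground state $\mathfrak z_{\beta}$ lies on the constraint $\{u\in H^1_{\mathrm r}(\mathbb R^3)\setminus\{0\}:\mathcal J_{2\lambda,\beta}(u)=0\}$, it minimizes $\mathcal I_{2\lambda,\beta}$ there, and
\[
0<\mathcal I_{2\lambda,\beta}(\mathfrak z_{\beta})=\inf_{u\neq0}\max_{t>0}\mathcal I_{2\lambda,\beta}(\zeta_u(t)),
\]
with $\zeta_u$ as in \eqref{zeta}; positivity is obtained by the same elementary computation as in Step 2 of the proof of Theorem \ref{th:gs}. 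Since $\mathcal I_{2\lambda,\beta}(\zeta_u(t))$ has the form $\mathfrak f_\sigma(t)=\mu t+\nu t^3-\sigma t^p$ with $p=4q-3>3$ (because $q>3/2$), Lemma \ref{lem:calculus}(\ref{fa}) tells us that $t\mapsto\mathcal I_{2\lambda,\beta}(\zeta_u(t))$ has a unique critical point, which is its strict global maximum, and, via the identity $\mathcal J_{2\lambda,\beta}(\zeta_u(t))=t\,\tfrac{d}{dt}\mathcal I_{2\lambda,\beta}(\zeta_u(t))$, this maximum is attained at $t=1$ precisely when $u$ satisfies $\mathcal J_{2\lambda,\beta}(u)=0$.

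Next I would exploit the monotone dependence of the two functionals $\mathcal J_{2\lambda,\beta}$ and $\mathcal I_{2\lambda,\beta}$ on $\beta$. From $\mathcal J_{2\lambda,\beta_1}(\mathfrak z_{\beta_1})=0$, the fact that $\mathfrak z_{\beta_1}\not\equiv0$, and $q>3/2$, I get
\[
\mathcal J_{2\lambda,\beta_2}(\mathfrak z_{\beta_1})=\mathcal J_{2\lambda,\beta_1}(\mathfrak z_{\beta_1})-\frac{4q-3}{2q}(\beta_2-\beta_1)\|\mathfrak z_{\beta_1}\|_{2q}^{2q}=-\frac{4q-3}{2q}(\beta_2-\beta_1)\|\mathfrak z_{\beta_1}\|_{2q}^{2q}<0.
\]
Hence, by Lemma \ref{lem:calculus}(\ref{fa}) and the identity above applied to $\mathcal I_{2\lambda,\beta_2}(\zeta_{\mathfrak z_{\beta_1}}(\cdot))$, the unique $t_1>0$ with $\mathcal J_{2\lambda,\beta_2}(\zeta_{\mathfrak z_{\beta_1}}(t_1))=0$ satisfies $t_1\in(0,1)$: indeed $\mathcal J_{2\lambda,\beta_2}(\zeta_{\mathfrak z_{\beta_1}}(t))$ is positive for $t<t_1$ and negative for $t>t_1$, so its negativity at $t=1$ forces $t_1<1$, and $\mathcal I_{2\lambda,\beta_2}(\zeta_{\mathfrak z_{\beta_1}}(\cdot))$ is maximized at $t_1$.

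Finally I would chain the inequalities. Using the minimax characterization for $\beta_2$, then the strict inequality $\mathcal I_{2\lambda,\beta_2}(\zeta_w(t))=\mathcal I_{2\lambda,\beta_1}(\zeta_w(t))-\frac{\beta_2-\beta_1}{2q}t^{4q-3}\|w\|_{2q}^{2q}<\mathcal I_{2\lambda,\beta_1}(\zeta_w(t))$ valid for every $w\neq0$ and $t>0$ (here with $w=\mathfrak z_{\beta_1}$ and $t=t_1>0$), and then that $t=1$ maximizes $\mathcal I_{2\lambda,\beta_1}(\zeta_{\mathfrak z_{\beta_1}}(\cdot))$ because $\mathfrak z_{\beta_1}$ lies on $\{\mathcal J_{2\lambda,\beta_1}=0\}$, I obtain
\[
\mathcal I_{2\lambda,\beta_2}(\mathfrak z_{\beta_2})\leq\mathcal I_{2\lambda,\beta_2}(\zeta_{\mathfrak z_{\beta_1}}(t_1))<\mathcal I_{2\lambda,\beta_1}(\zeta_{\mathfrak z_{\beta_1}}(t_1))\leq\mathcal I_{2\lambda,\beta_1}(\zeta_{\mathfrak z_{\beta_1}}(1))=\mathcal I_{2\lambda,\beta_1}(\mathfrak z_{\beta_1}),
\]
which, together with the positivity $\mathcal I_{2\lambda,\beta_2}(\mathfrak z_{\beta_2})>0$ from the first step, is the desired chain. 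I do not expect a genuine obstacle here: the only points that require a little care are the sign argument that pins $t_1$ strictly below $1$ and the (routine) check that the Ruiz-type identities and minimax characterization transfer verbatim from \eqref{eq:ruiz} to \eqref{2lambdabeta}, which the text has already remarked upon when introducing $\mathfrak z_\beta$.
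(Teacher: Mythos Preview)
Your proposal is correct and follows essentially the same argument as the paper: both start from $\mathcal J_{2\lambda,\beta_1}(\mathfrak z_{\beta_1})=0$ to deduce $\mathcal J_{2\lambda,\beta_2}(\mathfrak z_{\beta_1})<0$, project $\mathfrak z_{\beta_1}$ onto the $\beta_2$-constraint via the path $\zeta_{\mathfrak z_{\beta_1}}$ at some $t_1\in(0,1)$, and then chain the same four inequalities (minimax for $\beta_2$, strict monotonicity of $\mathcal I_{2\lambda,\beta}$ in $\beta$, maximality of $t=1$ for $\mathcal I_{2\lambda,\beta_1}(\zeta_{\mathfrak z_{\beta_1}}(\cdot))$). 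The only cosmetic difference is that the paper writes the third inequality as strict (which it is, since $t_1<1$ and $t=1$ is the unique maximizer), whereas you use $\leq$; either way the conclusion follows.
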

\begin{proof}
	We know that
	\begin{align*}
	0&=\mathcal J_{2\lambda, \beta_1} (\mathfrak z_{\beta_1})\\
	&= \frac{3}{2}  \|\nabla \mathfrak z_{\beta_1} \|_2^2
	+ \frac{1}{2} \| \mathfrak z_{\beta_1} \|_2^2
	+ \frac{3}{2} \lambda \int  \phi_{\mathfrak z_{\beta_1}}\mathfrak z_{\beta_1}^{2}	
	-\frac{4q-3}{2q}(1+\beta_1)  \|\mathfrak z_{\beta_1}\|_{2q}^{2q}\\
	&>\frac{3}{2}  \|\nabla \mathfrak z_{\beta_1} \|_2^2
	+ \frac{1}{2} \| \mathfrak z_{\beta_1} \|_2^2
	+ \frac{3}{2} \lambda \int  \phi_{\mathfrak z_{\beta_1}}\mathfrak z_{\beta_1}^{2}	
	-\frac{4q-3}{2q}(1+\beta_2)  \|\mathfrak z_{\beta_1}\|_{2q}^{2q}\\
	&=\mathcal J_{2\lambda, \beta_{2}} (\mathfrak z_{\beta_1}).
	\end{align*}
	Hence, by \eqref{Jonevart} and (\ref{fa}) in Lemma \ref{lem:calculus},
	there exists $t_{\beta_1}\in(0,1)$ such that
	$$\mathcal J_{2\lambda,\beta_2} (\zeta_{\mathfrak z_{\beta_1}}(t_{\beta_1})) =0.$$
	Then, 
	\[
	0< \mathcal I_{2\lambda, \beta_2} (\mathfrak z_{\beta_2})
	\leq \mathcal I_{2\lambda, \beta_2}(\zeta_{\mathfrak z_{\beta_1}}(t_{\beta_1}))
	< \mathcal I_{2\lambda, \beta_1}(\zeta_{\mathfrak z_{\beta_1}}(t_{\beta_1}))
	<\mathcal I_{2\lambda, \beta_1}(\zeta_{\mathfrak z_{\beta_1}}(1))
	= \mathcal I_{2\lambda, \beta_1}(\mathfrak z_{\beta_1})
	\]
	and the proof is complete.
\end{proof}

In particular 
Lemma \ref{monotonicitygs} says that, if $\beta>0$,
\begin{equation}\label{eq:0b}
\mathcal I_{2\lambda, \beta} (\mathfrak z_{\beta})
< \mathcal I_{2\lambda, 0}(\mathfrak z_{0}).
\end{equation}

Then we can prove the desired result.

\begin{proof}[Proof of Theorem \ref{A1}]
	By Lemma \ref{leincr}, with $\lambda_{1}=\lambda , \lambda_{2} = 2\lambda, \mathfrak w_{1} = \mathfrak w, \mathfrak w_{2} = \mathfrak z_{0}$, and \eqref{i2i}, we deduce
	\begin{equation}\label{nmin}
	\mathfrak n= 
	\mathcal{I}_{\lambda,0}(\mathfrak{w})
	<\mathcal{I}_{2\lambda,0}(\mathfrak{z}_{0})
	<2 \mathcal{I}_{2\lambda,0}(\mathfrak{z}_{0})
	=I_{\lambda,0}(\mathfrak z_{0}, \mathfrak z_{0}). 
	\end{equation}
	Let now
	$\{\beta_n\}\subset(0,+\infty)$ be such that $\beta_n \to 0^{+}$ and $\beta_{n+1}<\beta_n$ and $k_{\beta_n}:=\mathcal I_{2\lambda ,\beta_n} (\mathfrak z_{\beta_n})>0$.\\
	By Lemma \ref{monotonicitygs} we know that $\{k_{\beta_{n}}\}$ is bounded and, by \eqref{eq:0b},
	\begin{equation}\label{eq:kkk}
	0<k_{\beta_0} < k_{\beta_n} < \mathcal I_{2\lambda, 0}(\mathfrak z_{0}).	
	\end{equation}
	Arguing for the single equation \eqref{2lambdabeta} 
	as in \eqref{eq:limitazione} and Remark \ref{rem:Convergenza},
	we get that $\{\mathfrak z_{\beta_n}\}$ is  bounded in 
	$H_{\textrm r}^1(\mathbb R^{3})$ and we know  also that $\mathcal I'_{2\lambda, \beta_n}(\mathfrak z_{\beta_n})=0$.
	Thus
	\begin{equation*}\label{eq:kbn}
	\mathcal I_{2\lambda, 0}(\mathfrak z_{\beta_n})
	=
	k_{\beta_{n}}+\frac{\beta_n}{2q} \|\mathfrak z_{\beta_n}\|_{2q}^{2q}=k_{\beta_n}+\varepsilon_n
	\end{equation*}
	and 
	\[
	\| \mathcal I'_{2\lambda, 0}(\mathfrak z_{\beta_n})\|
	=
	\sup_{\|v\|\leq 1} \left| \mathcal I'_{2\lambda, \beta_n}(\mathfrak z_{\beta_n})[v] + \beta_n \int |\mathfrak z_{\beta_n}|^{2q-2}\mathfrak z_{\beta_n} v\right|
	\leq
	C \beta_n \|\mathfrak z_{\beta_n}\|^{2q-1}
	=\varepsilon_n,
	\]
	namely that $\{\mathfrak z_{\beta_n}\}$ is a (PS) sequence for $\mathcal I_{2\lambda, 0}$.\\
	Arguing as in the Proof of Proposition \ref{th:tendearuiz}, we show that $\mathfrak z_{\beta_n}\to w$ in $H_\textrm{r}^1(\mathbb R^{3})$, $\mathcal I'_{2\lambda,0}(w)=0$, and, by \eqref{eq:kkk}, $w\neq 0$.\\
	Moreover,
	\[
	k_{\beta_{n}}=\mathcal I_{2\lambda, 0}(\mathfrak z_{\beta_n})-\frac{\beta_n}{2q} \|\mathfrak z_{\beta_n}\|_{2q}^{2q} \to \mathcal I_{2\lambda, 0}(w).
	\]
	Hence, by \eqref{i2i} and \eqref{nmin},
	\[
	I_{\lambda, \beta_n} (\mathfrak z_{\beta_n},\mathfrak z_{\beta_n})
	=2 k_{\beta_{n}}
	\to 2 \mathcal I_{2\lambda, 0}(w)
	\geq 2 \mathcal I_{2\lambda, 0}(\mathfrak z_{0})
	=I_{\lambda, 0} (\mathfrak z_{0},\mathfrak z_{0})
	>\mathfrak{n}.
	\]
	Thus, by Proposition \ref{th:tendearuiz}, we get that for $\beta$ small  $(\mathfrak z_{\beta}, \mathfrak z_{\beta})$
	cannot be  a ground state.
\end{proof}

\section{The case $\beta$ large}\label{sec:blarge}

In this section we study the {\em vectorial nature} of the ground states $(\mathfrak u_{\beta}, \mathfrak v_{\beta})$ of \eqref{system} for $\beta$ large, namely satisfying \eqref{betalarge}, and
we show that such a ground state vanishes as $\beta \to +\infty$. Indeed we have

\begin{proof}[Proof of \eqref{th:gsiii} of Theorem \ref{th:gs}]

The fact that the ground state solution has to be vectorial, follows taking into account
that, by (\ref{facileiib}) and (\ref{ciiib}) of (\ref{facileiiib}) in Lemma \ref{unionefacili},
$\max_{[0,1]}\mathfrak h_{\beta}>1$. Then arguing as in Lemma \ref{Lemma52}, this implies that $\mathfrak n>\mathfrak m_{\beta}$
and so we can conclude as in the proof of \eqref{th:gsiib} of Theorem \ref{th:gs}.\\
To prove \eqref{betainfinito}, let us fix $u\in H^{1}_{\textrm r}(\mathbb R^{3})\setminus\{0\}$ and let
$T_{\beta}>0$  be the real number such that $J_{\lambda,\beta} ({\gamma}_{u,u}(T_\beta))=0$, namely  such that ${\gamma}_{u,u}(T_\beta)\in \mathcal{M}$.
By (\ref{fc}) in Lemma \ref{lem:calculus} we have that
\[
\lim_{\beta\to+\infty} T_{\beta}=0
\text{ and }
\lim_{\beta\to+\infty} \frac{4q-3}{q} (1+\beta) \|u\|_{2q}^{2q}T_{\beta}^{4q-4}
=\|u\|_{2}^{2}.
\]
Thus
\begin{align*}
0
&<
\mathfrak m_\beta
\leq
I_{\lambda,\beta}({\gamma_{u,u}}(T_\beta))\\
&=
T_\beta^3\|\nabla u \|_2^2 
+ T_\beta \| u \|_2^2
+ \lambda T_\beta^3 \int \phi_u u^2
- \frac{T_\beta^{4q-3}}{q} (1+\beta) \|u\|_{2q}^{2q}\\
&=
T_\beta \| u \|_2^2
+\frac{1}{3}
\left[
\frac{4q-3}{q} (1+\beta) T_\beta^{4q-3}\|u\|_{2q}^{2q}
- T_\beta \| u \|_2^2
\right]
- \frac{T_\beta^{4q-3}}{q} (1+\beta) \|u\|_{2q}^{2q}\\
&=
\frac{2}{3} T_\beta
\left(
\| u \|_2^2
- \frac{2q-3}{q} (1+\beta)T_\beta^{4q-4} \|u\|_{2q}^{2q}
\right)
\to 0
\end{align*}
as $\beta\to+\infty$.\\
Hence we conclude by Remark \ref{rem:Convergenza}.
\end{proof}

Now we show that, actually, a ground state can be taken with the two components equal. Indeed
\begin{Th}\label{A2}
If $\beta$ satisfies \eqref{betalarge}, then
\[
\mathfrak{m}_\beta=I_{\lambda,\beta}(\mathfrak{z}_\beta,\mathfrak{z}_\beta),
\]
where $\mathfrak{z}_\beta$ is a ground state solution  of \eqref{2lambdabeta}.
\end{Th}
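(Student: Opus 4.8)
The plan is to prove the two inequalities $\mathfrak{m}_\beta\le I_{\lambda,\beta}(\mathfrak{z}_\beta,\mathfrak{z}_\beta)$ and $\mathfrak{m}_\beta\ge I_{\lambda,\beta}(\mathfrak{z}_\beta,\mathfrak{z}_\beta)$ separately, keeping in mind that by \eqref{i2i} one has $I_{\lambda,\beta}(\mathfrak{z}_\beta,\mathfrak{z}_\beta)=2\mathcal I_{2\lambda,\beta}(\mathfrak{z}_\beta)$. For the first inequality I would simply observe that, by Remark \ref{rem2l}, $(\mathfrak{z}_\beta,\mathfrak{z}_\beta)$ is a nontrivial radial solution of \eqref{system}, hence it belongs to the Nehari--Pohozaev manifold $\mathcal M$ (which contains all such critical points); since $\mathfrak m_\beta=\inf_{\mathcal M}I_{\lambda,\beta}$, this already gives $\mathfrak m_\beta\le I_{\lambda,\beta}(\mathfrak{z}_\beta,\mathfrak{z}_\beta)$.

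The reverse inequality is where the assumption \eqref{betalarge} enters, through Lemma \ref{unionefacili}: by items (\ref{facileiib}) and (\ref{ciiib}) of (\ref{facileiiib}) of that lemma, when $\beta$ satisfies \eqref{betalarge} the maximum point $y_\beta$ of $\mathfrak h_\beta$ on $[0,1/2]$ is exactly $y_\beta=1/2$, so the constant angular coordinate $\theta_\beta$ fixed just before Lemma \ref{Mbeta} equals $\pi/4$. Consequently the ground state produced by Lemma \ref{Mbeta} has equal components: it is of the form $(w_\beta,w_\beta)$ with $w_\beta:=\tfrac{1}{\sqrt2}\,t_\beta^2\varrho_\beta(t_\beta\,\cdot)$, and \eqref{eq:gslevel} together with \eqref{i2i} yields $\mathfrak m_\beta=I_{\lambda,\beta}(w_\beta,w_\beta)=2\mathcal I_{2\lambda,\beta}(w_\beta)$. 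Since $(w_\beta,w_\beta)$ solves \eqref{system}, Remark \ref{rem2l} shows that $w_\beta\not\equiv0$ solves \eqref{2lambdabeta}, hence $w_\beta$ satisfies the Nehari and Pohozaev identities for \eqref{2lambdabeta}, so $\mathcal J_{2\lambda,\beta}(w_\beta)=0$, i.e. $w_\beta$ lies in the set on which $\mathfrak z_\beta$ minimizes $\mathcal I_{2\lambda,\beta}$. Therefore $\mathcal I_{2\lambda,\beta}(\mathfrak z_\beta)\le\mathcal I_{2\lambda,\beta}(w_\beta)=\tfrac12\mathfrak m_\beta$, that is $I_{\lambda,\beta}(\mathfrak z_\beta,\mathfrak z_\beta)\le\mathfrak m_\beta$, and combining with the first step completes the proof. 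I expect the only point needing care to be the use of the \emph{specific} ground state supplied by Lemma \ref{Mbeta} (an arbitrary ground state $(\mathfrak u_\beta,\mathfrak v_\beta)$ is not a priori known to have equal components), together with checking that $w_\beta$ is nontrivial and belongs to the Nehari--Pohozaev set of \eqref{2lambdabeta}.

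An alternative, purely computational route to the reverse inequality avoids Lemma \ref{Mbeta}: writing $(u,v)=(\varrho\cos\vartheta,\varrho\sin\vartheta)$ in polar coordinates, one has $\|\nabla u\|_2^2+\|\nabla v\|_2^2\ge\|\nabla\varrho\|_2^2$ and, since $\mathfrak h_\beta(1/2)=2^{1-q}(1+\beta)$ while $\mathfrak h_\beta(\cos^2\vartheta)\le\mathfrak h_\beta(1/2)$ under \eqref{betalarge} by Lemma \ref{unionefacili}, a term-by-term comparison (all four coefficients matching up) gives $I_{\lambda,\beta}(\gamma_{u,v}(t))\ge 2\,\mathcal I_{2\lambda,\beta}(\zeta_{\varrho/\sqrt2}(t))$ for every $t>0$. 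Taking the maximum over $t>0$ and then the infimum over $(u,v)\in{\rm H}_{\rm r}\setminus\{0\}$, and invoking the minimax characterization $\mathfrak m_\beta=\inf_{(u,v)\ne0}\max_{t>0} I_{\lambda,\beta}(\gamma_{u,v}(t))$ together with the analogue of \eqref{gslRuiz} for \eqref{2lambdabeta}, one obtains $\mathfrak m_\beta\ge 2\mathcal I_{2\lambda,\beta}(\mathfrak z_\beta)=I_{\lambda,\beta}(\mathfrak z_\beta,\mathfrak z_\beta)$, which is the inequality needed.
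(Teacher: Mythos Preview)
Your main argument is correct and coincides with the paper's proof: the paper first invokes Lemma~\ref{Mbeta} (stated there as Lemma~\ref{Mbetag}) with $y_\beta=1/2$ to produce a diagonal ground state $(\mathfrak u_\beta,\mathfrak u_\beta)$, then uses Remark~\ref{rem2l} to write $\mathfrak m_\beta=2\mathcal I_{2\lambda,\beta}(\mathfrak u_\beta)\ge 2\mathcal I_{2\lambda,\beta}(\mathfrak z_\beta)=I_{\lambda,\beta}(\mathfrak z_\beta,\mathfrak z_\beta)\ge\mathfrak m_\beta$. Your alternative polar-coordinate computation is also valid (the four coefficients do match, as you claim) and is essentially the estimate that underlies the proof of Lemma~\ref{Mbeta} itself, so it is not a genuinely different route but a direct unpacking of that lemma.
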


Let $(\mathfrak u_{\beta}, \mathfrak v_{\beta})$ be a vectorial ground state just found and let us consider its polar coordinates as in \eqref{eq:polar}.
Taking into account Lemma \ref{Mbeta} and  so Remark \ref{rem:generale},
using (\ref{facileiib}) and (\ref{ciiib}) of (\ref{facileiiib}) in Lemma \ref{unionefacili},
we have
\begin{Lem}\label{Mbetag}
	If $\beta$ satisfies \eqref{betalarge}, then there exists $t_\beta > 0$ such that 
	$\gamma_{\varrho_{\beta}/\sqrt{2},\varrho_{\beta}/\sqrt{2}}(t_\beta)
	\in \mathcal{M}$
	and
	\begin{equation*}\label{eq:gslevelg}
	\mathfrak m_{\beta}
	= I_{\lambda,  \beta}(\gamma_{\varrho_{\beta}/\sqrt{2},\varrho_{\beta}/\sqrt{2}}(t_\beta)).
	\end{equation*}
	In particular $\gamma_{\varrho_{\beta}/\sqrt{2},\varrho_{\beta}/\sqrt{2}}(t_\beta)$ is a  ground state solution.
\end{Lem}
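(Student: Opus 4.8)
The plan is to obtain Lemma \ref{Mbetag} as the special case $\theta_\beta=\pi/4$ of Lemma \ref{Mbeta}. First I would check that, under hypothesis \eqref{betalarge}, the maximum point of $\mathfrak h_\beta$ on $[0,1]$ produced by Lemma \ref{unionefacili} is precisely $y_\beta=1/2$: for $q\in(3/2,2)$ and $\beta\in[q-1,+\infty)$ this is the last assertion of (\ref{facileiib}), while for $q\in[2,3)$ and $\beta\in(2^{q-1}-1,+\infty)$ it is (\ref{ciiib}) of (\ref{facileiiib}). Hence, in the notation of Lemma \ref{Mbeta}, $y_\beta=\cos^2\theta_\beta=1/2$, so $\cos\theta_\beta=\sin\theta_\beta=1/\sqrt2$ and $\varrho_\beta\cos\theta_\beta=\varrho_\beta\sin\theta_\beta=\varrho_\beta/\sqrt2$. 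Feeding this into Lemma \ref{Mbeta} immediately produces $t_\beta>0$ with $\gamma_{\varrho_\beta/\sqrt2,\varrho_\beta/\sqrt2}(t_\beta)\in\mathcal M$, the energy identity $\mathfrak m_\beta=I_{\lambda,\beta}(\gamma_{\varrho_\beta/\sqrt2,\varrho_\beta/\sqrt2}(t_\beta))$, and the fact that this couple is a ground state.

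If a self-contained argument is preferred, I would repeat the proof of Lemma \ref{Mbeta} with $(\varrho_\beta/\sqrt2,\varrho_\beta/\sqrt2)$ in place of $(\varrho_\beta\cos\theta_\beta,\varrho_\beta\sin\theta_\beta)$. By Lemma \ref{Step0} there is a unique $t_\beta>0$ with $\gamma_{\varrho_\beta/\sqrt2,\varrho_\beta/\sqrt2}(t_\beta)\in\mathcal M$, and it realizes $\max_{t>0}I_{\lambda,\beta}(\gamma_{\varrho_\beta/\sqrt2,\varrho_\beta/\sqrt2}(t))$, which already gives $\mathfrak m_\beta\leq I_{\lambda,\beta}(\gamma_{\varrho_\beta/\sqrt2,\varrho_\beta/\sqrt2}(t_\beta))$. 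For the reverse inequality I would compare with the fixed vectorial ground state $(\mathfrak u_\beta,\mathfrak v_\beta)=(\varrho_\beta\cos\vartheta_\beta,\varrho_\beta\sin\vartheta_\beta)$: the $L^2$ norms and the Coulomb term of $(\varrho_\beta/\sqrt2,\varrho_\beta/\sqrt2)$ equal those of $(\mathfrak u_\beta,\mathfrak v_\beta)$ since both depend only on $\varrho_\beta$ and $\phi_{\varrho_\beta/\sqrt2,\varrho_\beta/\sqrt2}=\phi_{\varrho_\beta}=\phi_{\mathfrak u_\beta,\mathfrak v_\beta}$; the Dirichlet terms satisfy $\|\nabla(\varrho_\beta/\sqrt2)\|_2^2+\|\nabla(\varrho_\beta/\sqrt2)\|_2^2=\|\nabla\varrho_\beta\|_2^2\leq\|\nabla\mathfrak u_\beta\|_2^2+\|\nabla\mathfrak v_\beta\|_2^2$; and, because $y_\beta=1/2$ maximizes $\mathfrak h_\beta$, $\mathfrak u_\beta^{2q}+\mathfrak v_\beta^{2q}+2\beta\,\mathfrak u_\beta^q\mathfrak v_\beta^q=\mathfrak h_\beta(\cos^2\vartheta_\beta)\varrho_\beta^{2q}\leq\mathfrak h_\beta(1/2)\varrho_\beta^{2q}$. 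Combining, $I_{\lambda,\beta}(\gamma_{\varrho_\beta/\sqrt2,\varrho_\beta/\sqrt2}(t))\leq I_{\lambda,\beta}(\gamma_{\mathfrak u_\beta,\mathfrak v_\beta}(t))$ for every $t>0$, so $\max_{t>0}I_{\lambda,\beta}(\gamma_{\varrho_\beta/\sqrt2,\varrho_\beta/\sqrt2}(t))\leq\mathfrak m_\beta$, and equality follows.

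I do not expect a real obstacle here: everything is already packaged in Lemmas \ref{unionefacili}, \ref{Step0} and \ref{Mbeta}, and the only mildly delicate point is matching the two ranges in \eqref{betalarge} with the two cases of Lemma \ref{unionefacili} in which $\mathfrak h_\beta$ is maximized at $1/2$. Note that this lemma is exactly what makes $(\mathfrak z_\beta,\mathfrak z_\beta)$ a viable ground-state candidate in Theorem \ref{A2}, since $\gamma_{\varrho_\beta/\sqrt2,\varrho_\beta/\sqrt2}(t_\beta)$ has equal components and hence, by Remark \ref{rem2l}, its common component solves \eqref{2lambdabeta}.
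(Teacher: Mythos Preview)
Your proposal is correct and matches the paper's approach exactly: the paper derives Lemma~\ref{Mbetag} directly from Lemma~\ref{Mbeta} by observing, via (\ref{facileiib}) and (\ref{ciiib}) of (\ref{facileiiib}) in Lemma~\ref{unionefacili}, that under \eqref{betalarge} the maximizer of $\mathfrak h_\beta$ is $y_\beta=1/2$, i.e.\ $\theta_\beta=\pi/4$. Your optional self-contained argument is just the proof of Lemma~\ref{Mbeta} specialized to this case, so it is also fine.
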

Thus we are ready to complete the proof.

\begin{proof}[Proof of Theorem \ref{A2}]
By Lemma \ref{Mbetag}, there exists $\mathfrak{u}_\beta \in H_{\rm r}^1(\mathbb{R}^3)\setminus\{0\}$
such that
\[
\mathfrak{m}_\beta=I_{\lambda,\beta}(\mathfrak{u}_\beta,\mathfrak{u}_\beta).
\]
Thus, by Remark \ref{rem2l}, we infer
\[
\mathfrak{m}_\beta=2\mathcal{I}_{2\lambda,\beta}(\mathfrak{u}_\beta)
\geq 2\mathcal{I}_{2\lambda,\beta}(\mathfrak{z}_\beta)
= I_{\lambda,\beta}(\mathfrak{z}_\beta,\mathfrak{z}_\beta)
\geq \mathfrak{m}_\beta
\]
concluding the proof.
\end{proof}

\section{The particular case $\beta=2^{q-1}-1$ and $q\in[2,3)$}\label{sec:7}

In this particular case we can argue as in Section \ref{sec:POSITIVESMALL}
and Section \ref{sec:blarge} to get both (semitrivial and vectorial) types of ground states.
\\
By Lemma \ref{Lemma51}, being $\mathfrak m_{\beta}=\mathfrak n$,
we get that 
$$I_{\lambda,\beta}(\mathfrak w,0)=I_{\lambda,\beta}(0,\mathfrak w)=\mathfrak m_{\beta}$$
and so $(\mathfrak w,0)$ and $(0,\mathfrak w)$ are semitrivial ground states.\\
Of course in this case we cannot proceed as in the proof of 
item \eqref{th:gsi} of Theorem \ref{th:gs} since $0$ and $1$ are not the only maximisers
of $\mathfrak h_{\beta}$ and so, the existence of further maximisers
gives   vectorial ground states too
(see (\ref{biiib}) of item (\ref{facileiiib}) in Lemma \ref{unionefacili}).
\\
Indeed Lemma \ref{Mbeta} applies with $y_{\beta}=1/2$,
and so $\theta_{\beta}=\pi/4$. Hence we get that 
there exists $t_\beta > 0$ such that 
	$\gamma_{\varrho_{\beta}/\sqrt{2},\varrho_{\beta}/\sqrt{2}}(t_\beta)
	\in \mathcal{M}$,
	\begin{equation*}
	\mathfrak m_{\beta}
	= I_{\lambda,  \beta}(\gamma_{\varrho_{\beta}/\sqrt{2},\varrho_{\beta}/\sqrt{2}}(t_\beta)),
	\end{equation*}
and  $\gamma_{\varrho_{\beta}/\sqrt{2},\varrho_{\beta}/\sqrt{2}}(t_\beta)$ is a  ground state solution.
\\
More in particular, if $q=2$, $\mathfrak h_{\beta}\equiv1$. Then we can take an arbitrary
$y_{\beta}\in(0,1)$, obtaining that there exists $t_{\beta}>0$ such that
$\gamma_{\varrho_{\beta}\cos\theta_{\beta},\varrho_{\beta}\sin\theta_{\beta}}(t_\beta)
$ is a  ground state solution.
\\
Finally we observe that, as a corollary of this last property, arguing as in Theorem \ref{A2}, we have also that
\[
\mathfrak{m}_\beta=I_{\lambda,\beta}(\mathfrak{z}_\beta,\mathfrak{z}_\beta).
\]

\appendix

\section{Proof of Lemma \ref{unionefacili}}\label{AppLemma24}

In this Appendix we present the details of the proof of Lemma \ref{unionefacili}.\\
	First observe that $\mathfrak{h}_\beta$ is even with respect to the line $y=1/2$.\\
	Since $\mathfrak{h}_0$ is strictly decreasing in $[0,1/2]$, property (\ref{facilei}) is trivial. 
	\\
	Now let us consider $\beta>0$. The proof when $q=2$ is trivial. Thus, let us focus on (\ref{facileiiib}) for $q\in (2,3)$ 
	and (\ref{facileiib}).\\
	Observe that, for any fixed $\beta>0$, we have that $\mathfrak{h}_\beta'(1/2)=0$ and $\mathfrak{h}_\beta''(1/2)=2^{3-q}q(q-1-\beta)$. Thus, if $\beta<q-1$, then, $1/2$ cannot be a maximum point of $\mathfrak{h}_\beta$.\\
	Moreover, in $(0,1/2]$,
	\begin{equation}\label{eq:rhs}
		\frac{\mathfrak{h}_{\beta}'(y)}{q(1-y)^{q-1}}
		=
		\Big( \frac{y}{1-y}\Big)^{q-1}
		-1
		+\beta \Big( \frac{y}{1-y}\Big)^{\frac{q}{2}-1} 
		-\beta\left( \frac{y}{1-y} \right)^\frac{q}{2}
	\end{equation}
	Thus, to study the sign of $\mathfrak{h}_{\beta}'$, we can consider the right hand side of \eqref{eq:rhs} and, for simplicity, we write it as
	\[
	\mathfrak{g}_{\beta} (t):=t^{q-1} -1 +\beta t^{q/2-1}-\beta t^{q/2},
	\quad
	t\in(0,1],
	\]
	whose derivative is
	\[
		\qquad
		\mathfrak{g}_{\beta}' (t)
		=\frac{\mathfrak{r}_\beta(t)}{2t^{2-q/2}}
		\quad
		\text{ where }
		\mathfrak{r}_\beta(t):=2(q-1)t^{q/2} - \beta qt + \beta(q-2).
		\]
	Let us prove (\ref{facileiib}).\\
	Note that, whenever $q\in(3/2,2)$ and $\beta>0$, we have that $\displaystyle \lim_{y\to0^{+}} \mathfrak{h}_{\beta}'(y)=+\infty$.
	Thus, since $\mathfrak{h}_\beta(0)=1$, we get the existence of $y_\beta\in(0,1/2]$ such that $\mathfrak{h}_\beta(y_\beta)=\max_{y\in[0,1]}\mathfrak{h}_\beta(y)>1$.\\
	Moreover
	\begin{equation}
		\label{propgii}
		\lim_{t\to 0^+} \mathfrak{g}_{\beta} (t)=+\infty
		\text{ and }
		\mathfrak{g}_{\beta} (1)=0.
	\end{equation}
	If $\beta\in(0,q-1)$ we have that $\mathfrak{r}_\beta(0)<0$, $\mathfrak{r}_\beta(1)>0$, and $\mathfrak{r}_\beta$ is (strictly) increasing on the left of its unique maximum point $((q-1)/\beta)^{2/(2-q)}$ and then it is (strictly) decreasing. Thus $\mathfrak{r}_\beta$ has a unique zero $\mathfrak{t}_\beta$ which is the unique critical point (minimizer) of $\mathfrak{g}_\beta$ and $\mathfrak{g}_\beta$ is (strictly) decreasing in $(0,\mathfrak{t}_\beta)$ and (strictly) increasing in $(\mathfrak{t}_\beta,1)$. Hence, by \eqref{propgii}, $\mathfrak{g}_\beta$ has a unique zero in $(0,1)$ which gives us the unique maximum point $y_\beta$.\\
	If $\beta=q-1$ we have that $\mathfrak{r}_\beta(0)<0$, $\mathfrak{r}_\beta(1)= 0$, and  $\mathfrak{r}_\beta$ is (strictly) increasing in $(0,1)$. Thus $\mathfrak{g}'_{\beta}$ is (strictly) negative in $(0,1)$ and so, by \eqref{eq:rhs} and \eqref{propgii}, $\mathfrak{h}_{\beta}'$ is (strictly) positive in $(0,1/2)$. Hence the symmetry of $\mathfrak{h}_{\beta}$ allows us to conclude.\\
	If $\beta> q-1$ we have that $\mathfrak{r}_\beta(0)<0$, $\mathfrak{r}_\beta(1)< 0$, and $\mathfrak{r}_\beta$ is (strictly) increasing on the left of  its unique maximum point $((q-1)/\beta)^{2/(2-q)}$ and then it is (strictly) decreasing. Moreover
	\[
	\mathfrak{r}_\beta\Big((q-1)/\beta)^{2/(2-q)}\Big)
	=
	\frac{2-q}{\beta^{q/(2-q)}}\big((q-1)^{2/(2-q)}-\beta^{2/(2-q)}\big)< 0.
	\]
	Then, $\mathfrak{r}_{\beta}$ is (strictly) negative in $(0,1)$ and so, by \eqref{eq:rhs} and \eqref{propgii}, $\mathfrak{h}_{\beta}'$ is (strictly) positive in $(0,1/2)$. Hence we can conclude as in the previuos step.\\
	To prove the asymptotic behavior of $y_\beta$ as $\beta\to 0^+$, let us recall that $y_\beta\in(0,1/2)$ for $\beta<q-1$.
	If, by contradiction, we assume that $y_\beta \not\to 0$ as $\beta \to 0^+$, then there exists a  sequence $\{\beta_{n}\}$ tending to zero, such that $\mathfrak h_{\beta_{n}} (y_{\beta_{n}})>1$ and $\displaystyle \lim_{n}y_{\beta_{n}}= \ell \in (0,1/2]$. Then
	$$
	1\leq
	\lim_{n} \mathfrak h_{\beta_{n}}(y_{\beta_{n}})=\ell^{q}+(1-\ell)^{q}
	\leq\frac{1}{2^{q-1}}
	<1 $$
	getting the contradiction.\\
	Let us prove (\ref{facileiiib}).\\
	In this case, namely whenever $q\in(2,3)$ and $\beta>0$, we have that $\mathfrak{h}_{\beta}'(0)=-q$.
	Moreover
	\begin{equation}
		\label{propgiii}
		\mathfrak{g}_{\beta} (0)=-1
		\text{ and }
		\mathfrak{g}_{\beta} (1)=0.
	\end{equation}
	If $\beta\in (0,q-1)$, we have that $\mathfrak{r}_\beta(0)>0$, $\mathfrak{r}_\beta(1)>0$, and $\mathfrak{r}_\beta$ is (strictly) decreasing before its unique minimum point $(\beta/(q-1))^{2/(q-2)}$ and then it is (strictly) increasing. Moreover 
	\[
	\mathfrak{r}_\beta\Big((\beta/(q-1))^{2/(q-2)}\Big)
	=
	\frac{\beta(q-2)}{(q-1)^{2/(q-2)}}\big((q-1)^{2/(q-2)}-\beta^{2/(q-2)}\big)> 0.
	\]
	Thus, $\mathfrak{r}_{\beta}$ is (strictly) positive in $(0,1)$ and so, by \eqref{eq:rhs} and \eqref{propgiii}, $\mathfrak{h}_{\beta}'$ is (strictly) negative in $(0,1/2)$. Hence the symmetry of $\mathfrak{h}_{\beta}$ allows us to conclude.\\
	If $\beta=q-1$ we have that $\mathfrak{r}_\beta(0)>0$, $\mathfrak{r}_\beta(1)= 0$, and $\mathfrak{r}_\beta$ is (strictly) decreasing in $(0,1)$. Thus $\mathfrak{g}'_{\beta}$ is (strictly) positive in $(0,1)$ and so, by \eqref{eq:rhs} and \eqref{propgiii}, $\mathfrak{h}_{\beta}'$ is (strictly) negative in $(0,1/2)$. Hence we can conclude as in the previuos step.\\
	If $\beta > q-1$, we have that $\mathfrak{r}_\beta(0)>0$, $\mathfrak{r}_\beta(1)< 0$, , and $\mathfrak{r}_\beta$ is (strictly) decreasing in $(0,1)$. Thus $\mathfrak{r}_\beta$ has a unique zero $\mathfrak{t}_\beta$ which is the unique critical point (maximum point) of $\mathfrak{g}_\beta$ and $\mathfrak{g}_\beta$ is (strictly) increasing in $(0,\mathfrak{t}_\beta)$ and (strictly) decreasing in $(\mathfrak{t}_\beta,1)$. Hence, by \eqref{propgiii}, $\mathfrak{g}_\beta$ has a unique zero in $(0,1)$ which gives us a unique minimum point of $\mathfrak{h}_\beta$ in $(0,1/2)$ and so, the unique local maximum point of $\mathfrak{h}_\beta$ in $(0,1)$ is $1/2$.\\
	Since $\mathfrak{h}_\beta(0)=\mathfrak{h}_\beta(1)=1$ and $\mathfrak{h}_\beta(1/2)=(1+\beta)/2^{q-1}$ we get that $1/2$ is the global maximum point of $\mathfrak{h}_\beta$ in $[0,1/2]$ if and only if $\beta\geq 2^{q-1}-1$ and it is the unique global maximum if and only if $\beta > 2^{q-1}-1$.

\subsection*{Acknowledgements}
P. d'Avenia was partially supported by PRIN 2017JPCAPN {\em Qualitative and quantitative aspects of nonlinear PDEs} and by FRA2019 of Politecnico di Bari.
L.A. Maia was partially supported by FAPDF, CAPES, and CNPq grant 308378/2017-2.
G. Siciliano  was partially supported by
Fapesp grant 2018/17264-4, CNPq grant 304660/2018-3, FAPDF, and CAPES (Brazil) and INdAM (Italy).
\\
This work was partially carried out during a stay of P. d'Avenia at Universidade de S\~{a}o Paulo, of P. d'Avenia and G. Siciliano at Universidade de Bras\'ilia, of P. d'Avenia and L.A. Maia at Sapienza Universit\`a di Roma.
The authors would like to express their deep gratitude to the respective Universities for the warm hospitality.\\
Finally the authors thank Roberto Celiberto for the useful discussions about the Hartree-Fock method and the anonymous referees for the useful suggestions.

\appendix

\end{document}